\theoremstyle{definition}
\newtheorem{definition}{Definition}[section]
\newtheorem{example}[definition]{Example}
\newtheorem{remark}[definition]{Remark}
\theoremstyle{plain}
\newtheorem{proposition}[definition]{Proposition}
\newtheorem{theorem}[definition]{Theorem}
\newtheorem*{thm*}{Theorem}
\newtheorem{lemma}[definition]{Lemma}
\newtheorem{corollary}[definition]{Corollary}
\newtheorem*{cor*}{Corollary}
\newtheorem*{con*}{Conjecture}
\newtheorem*{verm*}{Vermutung}
\newtheorem{notation}[definition]{Notation}
\newcommand{\rank}{\operatorname{rank}} 
\newcommand{\T}{\operatorname{T}}
\newcommand{\GL}{\operatorname{{\mathbf GL}}}
\newcommand{\cF}{{\mathcal F}}
\newcommand{\C}{{\mathbb C}}
\newcommand{\R}{{\mathbb R}}
\newcommand{\Q}{{\mathbb Q}}
\newcommand{\Z}{{\mathbb Z}}
\renewcommand{\T}{{\mathbb T}}
\title[Topology of real algebraic curves near the tropical limit]{Topology of real algebraic curves near the non-singular tropical limit}
\DeclareMathOperator{\Conv}{Conv}
\DeclareMathOperator{\Col}{Col}
\DeclareMathOperator{\Area}{Area}
\DeclareMathOperator{\Circuit}{Circuit}
\DeclareMathOperator{\EvCirc}{EvCirc}
\DeclareMathOperator{\EvFreeCirc}{EvFreeCirc}
\newcommand{\pr}{\mathbb{P}}
\newcommand{\E}{\mathcal{E}}
\renewcommand{\qedsymbol}{\(\blacksquare\)}
\DeclareMathOperator{\Exp}{Exp}
\DeclareMathOperator{\Edge}{Edge}
\DeclareMathOperator{\Vertex}{Vertex}
\DeclareMathOperator{\Adm}{Adm}
\DeclareMathOperator{\Div}{Div}
\DeclareMathOperator{\Max}{Max}
\DeclareMathOperator{\EvFree}{EvFree}
\DeclareMathOperator{\Log}{Log}
\author{C\'edric Le Texier}
\address{Universitetet i Oslo, Norway}
\email{cedricle@math.uio.no}
\begin{document}
	
 \subjclass[2010]{Primary: 14T05, 14P25}

\begin{abstract}
In the 1990's, Itenberg and Haas studied the relations between combinatorial data in Viro's patchworking and the topology of the resulting non-singular real algebraic curves in the projective plane.
Using recent results from Renaudineau and Shaw on real algebraic curves near the non-singular tropical limit, we continue the study of Itenberg and Haas inside any non-singular projective toric surface. 
We give new Haas' like criteria for patchworking non-singular real algebraic curves with prescribed number of connected components, in terms of twisted edges on a non-singular tropical curve.
We then obtain several sufficient conditions for patchworking non-singular real algebraic curves with only ovals in their real part.
One of these sufficient conditions does not depend on the ambient toric surface. 
In that case, we count the number of even and odd ovals of those curves in terms of the dual subdivision, and construct some new counter-examples to Ragsdale's conjecture.
\end{abstract}
\maketitle
\tableofcontents

\section{Introduction}

The aim of this article is to study the topology of real algebraic curves near the non-singular tropical limit, in terms of the induced combinatorial data on the tropical limit, continuing the work started by Itenberg \cite{itenberg1995counter} and Haas \cite{haas1997real}.
A \emph{non-singular real algebraic curve} $\mathcal{C}$ in a non-singular projective toric surface $\pr_\Sigma$ is a non-singular algebraic curve with set of real points $\mathcal{C}(\R ) = \mathcal{C} \cap \pr_\Sigma (\R)$, for $\pr_\Sigma (\R )$ non-empty defined by the construction from Gelfand, Kapranov and Zelevinsky \cite[Theorem 11.5.4]{gelfand2014discriminants}.

One of the first results motivating the study of the topology of non-singular real algebraic curves is the \emph{Harnack-Klein inequality} \cite{harnack1876ueber}, \cite{Klei73}, which states that the number of connected components of the real part $\mathcal{C}(\R )$ of a non-singular real algebraic curve $\mathcal{C}$ of genus $g$ is less than or equal to $g+1$.
Moreover, Harnack gave a construction of non-singular real algebraic curves in $\pr^2$ reaching this bound for any degree $d$ \cite{harnack1876ueber}.

\begin{definition}
Let $\mathcal{C}$ be a non-singular real algebraic curve of genus $g$.
We say that $\mathcal{C}$ is a \emph{maximal} curve (or \emph{$M$-curve}) if its real part $\mathcal{C} (\R )$ has $g+1$ connected components.
More generally, we say that $\mathcal{C}$ is a \emph{$(M-r)$-curve} if its real part $\mathcal{C} (\R )$ has $g+1-r$ connected components.
\end{definition}

Haas (\cite{haas1997real}) gave a necessary and sufficient criterion for a real algebraic curve near the non-singular tropical limit to be maximal, using Viro's combinatorial patchworking \cite{viro1980curves}, \cite{viro1984gluing}, \cite{viro2001dequantization}.
This criterion can be stated in terms of \emph{twisted edges} on a tropical curve (see \Cref{DefTwist+} and \Cref{amoebaedge2+}). 
Renaudineau and Shaw (\cite{renaudineau2018bounding}) introduced a tool allowing to compute the number of connected components of a real algebraic curve near the non-singular tropical limit in terms of twisted edges on the tropical limit, and recovered Haas result.
Using these results, we introduce a new criterion for a real algebraic curve near the non-singular tropical limit to be a $(M-1)$-curve.
In the following, a \emph{primitive cycle} is a cycle on a tropical curve bounding a connected component of the complement, and an \emph{exposed edge} is an edge of a tropical curve lying in the boundary of an unbounded connected component of the complement.

\begin{theorem}[Theorem \ref{ThM-1}]
\label{IntroThM-1}
Let $C$ be a non-singular tropical curve in a non-singular projective tropical toric surface $\T \pr_\Sigma$, and let $T$ be an admissible set of twisted edges on $C$.
A real algebraic curve $\mathcal{C} \subset \pr_\Sigma$ near the non-singular tropical limit $C$ and inducing the set of twisted edges $T$ is a $(M-1)$-curve if and only if the dual subdivision $\Delta_C$ of $C$ contains a complete subgraph $K_n$ on $1\leq n\leq 4$ vertices satisfying the following conditions.
\begin{enumerate}
\item \label{IntroItem1M-1} The non-exposed twisted edges are those dual to the edges of $K_n$.
\item \label{IntroItem2M-1} Every primitive cycle $\gamma$ in $C$ dual to a vertex of $K_n$ has an odd number of twisted edges.
\item \label{IntroItem3M-1} Every primitive cycle $\gamma$ in $C$ not dual to a vertex of $K_n$ has an even number of twisted edges.
\end{enumerate}
\end{theorem}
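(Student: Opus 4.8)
The plan is to reduce the statement to a rank computation over $\Z/2$ for an explicit symmetric matrix attached to $(C,T)$, and then to classify its rank-one locus combinatorially.

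First I would recall the computation of $b_0(\mathcal{C}(\R))$ furnished by Renaudineau and Shaw and set up in the earlier sections. Near the non-singular tropical limit their chain complex computes the $\Z/2$-Betti numbers of $\mathcal{C}(\R)$, and for a curve it condenses into a single symmetric $\Z/2$-bilinear form. Concretely, index the primitive cycles of $C$ (equivalently the interior vertices of $\Delta_C$, of which there are $g$) and form the symmetric matrix $A \in \Mat_{g}(\Z/2)$ whose diagonal entry $A_{\gamma\gamma}$ is the parity of the number of twisted edges of the primitive cycle $\gamma$, and whose off-diagonal entry $A_{\gamma\gamma'}$ is the parity of the number of twisted edges shared by $\gamma$ and $\gamma'$. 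Since two distinct bounded regions of $\R^2\setminus C$ meet along at most one edge — the edge dual to the interior edge of $\Delta_C$ joining the two interior vertices — the entry $A_{\gamma\gamma'}$ equals $1$ exactly when the corresponding interior vertices are adjacent in $\Delta_C$ and their common (non-exposed) edge is twisted. The key input is the equality $b_0(\mathcal{C}(\R)) = 1 + \dim_{\Z/2}\ker A$, so that $\mathcal{C}$ is an $M$-curve iff $A = 0$, and an $(M-1)$-curve iff $\rank_{\Z/2} A = 1$.

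Second I would analyse the rank-one locus of $A$ algebraically. Over $\Z/2$ a nonzero symmetric matrix has rank one iff it equals $vv^{\mathsf T}$ for a unique nonzero $v$: a rank-one symmetric matrix is $\lambda vv^{\mathsf T}$ over any field, forcing $\lambda = 1$ over $\Z/2$, and such a matrix is never alternating since its diagonal equals $v$. Writing $S = \{\gamma : v_\gamma = 1\}$ and $n = |S|$, the identity $A_{\gamma\gamma'} = v_\gamma v_{\gamma'}$ unpacks termwise into the three conditions of the theorem. The diagonal identity $A_{\gamma\gamma}=v_\gamma$ says that $\gamma$ has an odd number of twisted edges iff $\gamma\in S$, which is precisely conditions \ref{IntroItem2M-1} and \ref{IntroItem3M-1}; the off-diagonal identity says, for $\gamma\neq\gamma'$, that their common edge is twisted iff both $\gamma,\gamma'\in S$. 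In particular any two cycles of $S$ must be adjacent in $\Delta_C$ (otherwise they share no edge, yet $A_{\gamma\gamma'}=1$), so $S$ spans a complete subgraph $K_n$ whose edges are exactly the non-exposed twisted edges, which is condition \ref{IntroItem1M-1}. The bound $1\leq n\leq 4$ then comes from planarity: the interior vertices of $\Delta_C$ and the interior edges joining them form a subgraph of the $1$-skeleton of the triangulation $\Delta_C$, which is planar, so the clique $S$ has at most $4$ vertices because $K_5$ is non-planar, while $n\geq 1$ as $v\neq 0$. Conversely, given such a $K_n$ I would read the three conditions backwards, set $v=\mathbf{1}_S$, and verify $A=vv^{\mathsf T}$, hence $\rank_{\Z/2}A=1$.

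The main obstacle I anticipate is not the linear algebra but securing the \emph{exact} equality $b_0(\mathcal{C}(\R)) = 1 + \dim_{\Z/2}\ker A$ in the present toric generality, as opposed to the Renaudineau--Shaw inequality $b_0 \le 1 + \dim_{\Z/2}\ker A$, together with fixing the parity conventions so that the $M$-curve stratum is exactly $A=0$; without this normalisation the two-sided implication would collapse. Once it is in place, the classification of the rank-one stratum and the planar clique bound go through as above. A secondary point to check carefully is the two-directional content of condition \ref{IntroItem1M-1}: that no non-exposed edge \emph{outside} $K_n$ is twisted, which is exactly the vanishing of the remaining off-diagonal entries of $vv^{\mathsf T}$.
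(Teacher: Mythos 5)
Your proposal is correct and follows essentially the same route as the paper: both reduce the statement via the Renaudineau--Shaw exact sequence to the condition $\rank_{\Z_2} A_T = 1$ for the symmetric matrix of pairwise twisted-edge parities of primitive cycles, then identify a rank-one symmetric matrix over $\Z_2$ as $vv^{\mathsf T}$ and translate the support of $v$ into the clique $K_n$ (with $n\leq 4$ by planarity and Kuratowski). The exact equality of $b_0$ with $1+\dim\ker A_T$ that you flag as the main obstacle is precisely what the paper imports from Renaudineau--Shaw (their Proposition 4.10 specialised to curves, stated as the long exact sequence in the paper), so no additional work is needed there.
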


We can also use \Cref{IntroThM-1} in order to construct many examples of $(M-r)$ curves for any $r\leq g$, see \Cref{CorM-rNonDiv}.
Indeed, let $C$ be a non-singular tropical curve given as the gluing of $r$ non-singular tropical curves $C_1 ,\ldots , C_r$ at some unbounded edges, such that the set of twisted edges $T_i$ on each $C_i$ satisfy \Cref{IntroThM-1}.
Then a real algebraic curve $\mathcal{C}$ near the non-singular tropical limit $C$ with set of twisted edges $T_1 \sqcup \ldots \sqcup T_r$ will be a $(M-r)$ curve. 
However, we would not be able to construct \emph{dividing curves} by this process.

\begin{definition}
A non-singular real algebraic curve $\mathcal{C}$ is said to be \emph{dividing} if $\mathcal{C}(\C ) \backslash \mathcal{C}(\R )$ is disconnected.
\end{definition}

Since the genus of a Riemann surface $S$ is given as the maximal number of non-intersecting loops on $S$ which does not disconnect $S$, every maximal curve is dividing.
Furthermore, every dividing curve of genus $g$ is a $(M-2s)$-curve, for $0 \leq s \leq \left\lfloor \frac{g}{2} \right\rfloor$.
In the process of proving the criterion for maximal curves, Haas obtained a necessary and sufficient condition to construct dividing curves near the non-singular tropical limit \cite{haas1997real}, again via Viro's combinatorial patchworking.
Using Haas dividing criterion in addition to the results of \cite{renaudineau2018bounding}, we give a new criterion for a real algebraic curve near the non-singular tropical limit to be a $(M-2)$-dividing curve, in terms of twisted edges on the tropical limit.

\begin{theorem}[Theorem \ref{ThM-2}]
\label{IntroThM-2}
Let $C$ be a non-singular tropical curve in a non-singular projective tropical toric surface $\T \pr_\Sigma$, and let $T$ be an admissible dividing set of twisted edges on $C$.
A real algebraic curve $\mathcal{C} \subset \pr_\Sigma$ near the non-singular tropical limit $C$ and inducing the set of twisted edges $T$ is a dividing $(M-2)$ curve if and only if the subgraph of the dual subdivision $\Delta_C$ dual to the set of non-exposed twisted edges is either a complete planar bipartite or a complete planar tripartite graph.   
\end{theorem}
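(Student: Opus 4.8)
The plan is to run the same Renaudineau--Shaw bookkeeping that underlies \Cref{ThM-1}, but now tracking two invariants simultaneously: the deficiency $r = g+1-b_0(\mathcal C(\R))$ and the dividing type of $\mathcal C$. Concretely, I would first recall from \cite{renaudineau2018bounding} (in the form used to prove \Cref{ThM-1}) that $b_0(\mathcal C(\R))$ is computed over $\F_2$ by the Renaudineau--Shaw boundary operator $\partial_T$ attached to the twist set $T$, so that the deficiency is read off as $\rank_{\F_2}\partial_T$; thus ``$\mathcal C$ is an $(M-2)$-curve'' becomes the purely combinatorial condition $\rank_{\F_2}\partial_T = 2$. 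The matrix of $\partial_T$ is supported on the incidences between the primitive cycles of $C$ (the interior vertices of $\Delta_C$) and the non-exposed twisted edges (the corresponding edges of $\Delta_C$), with the parity of twisted edges along each primitive cycle playing the role of the diagonal data, exactly as in \Cref{ThM-1}. So the first step is to isolate, by elementary $\F_2$ linear algebra on this incidence structure, the twist configurations whose associated subgraph $\Gamma \subset \Delta_C$ (dual to the non-exposed twisted edges) forces $\rank_{\F_2}\partial_T = 2$.

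The second step is to feed in Haas' dividing criterion \cite{haas1997real}, which near the non-singular tropical limit is again a condition on $T$: the existence of a coherent complex orientation, equivalently the vanishing of the class $[\mathcal C(\R)] \in H_1(\mathcal C(\C);\F_2)$. Reformulating this as a statement about $\Gamma$, the essential output should be that dividing forbids the ``reducible'' deficiency-$2$ configurations: the disjoint unions $K_a \sqcup K_b$ of complete graphs produced by the gluing construction of \Cref{CorM-rNonDiv} are never dividing, so a dividing $(M-2)$-curve must have connected non-exposed twisted subgraph. The combinatorial heart of the argument is then to show that, among connected deficiency-$2$ configurations, the dividing ones are precisely those in which no two primitive cycles belonging to the same ``part'' are joined by a twisted edge --- that is, $\Gamma$ is complete multipartite --- where the parts are the classes of cycles cut out by the orientation/parity data.

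The third step bounds the number of parts and records planarity. Since $\Gamma$ is a subgraph of the $1$-skeleton of the triangulation $\Delta_C$ of a planar polygon it is automatically planar, which already excludes $K_{3,3}$ and hence every complete multipartite graph containing it (such as $K_{1,2,3}$); combined with $\rank_{\F_2}\partial_T=2$ --- which is exactly what separates the complete multipartite graphs from the complete graphs $K_n$ ($n\le 4$) of \Cref{ThM-1}, the latter having deficiency $1$ --- this caps the number of parts at three and leaves precisely the complete planar bipartite and complete planar tripartite graphs. Conversely, for each such graph I would exhibit the twist data, check $\rank_{\F_2}\partial_T = 2$ directly, and verify Haas' dividing condition, which gives the ``if'' direction.

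The main obstacle I anticipate is the equivalence at the center of the second step: translating Haas' orientation-theoretic dividing criterion into the clean combinatorial statement ``no twisted edge within a part,'' while simultaneously controlling the $\F_2$-rank so that deleting the within-part edges is exactly what raises the deficiency from $1$ to $2$ without destroying dividingness. A delicate point to handle with care is the overlap of graph types between the two theorems --- for instance $K_{1,1,1}=K_3$ is both a complete graph and a complete tripartite graph --- which must be resolved by showing that the parity/orientation data of a genuinely dividing configuration can never coincide with the data producing an $(M-1)$-curve in \Cref{ThM-1}. This is where the dividing hypothesis does the decisive work.
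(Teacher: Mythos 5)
Your overall architecture coincides with the paper's: reduce ``$(M-2)$'' to $\rank \partial_T = 2$ via \Cref{ThConnectHom+}, feed in the dividing hypothesis, and read off the structure of the dual graph $\Gamma_T$ of non-exposed twisted edges. However, the step you flag as ``the main obstacle'' --- translating Haas' orientation-theoretic dividing criterion into combinatorial data --- is not where the work lies, and the step that actually carries the proof is left unexecuted. Haas' criterion in the form of \Cref{ThDividing+} is already the combinatorial statement that every cycle of $C$ meets $T$ in an even number of edges; since the diagonal entries of the matrix $A_T$ of \Cref{DefMatrixTwist} are exactly the parities $|\gamma_i\cap T| \bmod 2$, the dividing hypothesis says precisely that $A_T$ has zero diagonal, with no further reformulation needed. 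The decisive step, which your plan gestures at but does not supply, is the elementary classification of symmetric $g\times g$ matrices over $\Z_2$ with zero diagonal and rank $2$: the column space is spanned by two columns $v_1,v_2$ indexed by an adjacent pair of cycles (so $(A_T)_{1,2}=1$ after renumbering), every column lies in $\{0,v_1,v_2,v_1+v_2\}$, two cycles with equal nonzero columns are non-adjacent (their off-diagonal entry equals a diagonal entry, hence $0$), and two cycles with distinct nonzero columns are adjacent. The at most three classes of nonzero columns are therefore the parts of a complete bipartite or complete tripartite graph; planarity of $\Gamma_T$ is a supplementary restriction (as in \Cref{RemComplete2}) but it is the rank, not planarity, that caps the number of parts at three. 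Since this analysis is an equivalence at the level of the matrix, the converse direction requires no separate exhibition of twist data either.

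Two further slips. First, your deduction that a dividing $(M-2)$ configuration has connected non-exposed twisted subgraph does not follow from excluding the particular disconnected family $K_a\sqcup K_b$ of \Cref{CorM-rNonDiv}: ruling out one family of disconnected configurations does not rule out all of them. Connectivity is instead a consequence of the column-space analysis (or of the fact that a symmetric zero-diagonal matrix over $\Z_2$ has even rank on each diagonal block, so a rank-$2$ one has a single nontrivial block). Second, your instinct that the dividing hypothesis is what separates this theorem from \Cref{ThM-1} is correct, but the mechanism is simply the diagonal: in \Cref{ThM-1} the $K_n$ block carries an all-ones diagonal and $A_T=zz^t$ has rank $1$, whereas here the zero diagonal forces even rank, so the same underlying graph (e.g.\ $K_3=K_{1,1,1}$) is disambiguated by the parity of twisted edges on each primitive cycle rather than by any orientation data.
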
 

We can also use \Cref{IntroThM-2} in order to construct many examples of dividing $(M-2s)$ curves for any $s\leq \left\lfloor \frac{g}{2} \right\rfloor$, see \Cref{CorM-2sDiv}, in a similar way as we construct $(M-r)$ curves via \Cref{IntroThM-1}.
The next natural question to ask would be about the possible arrangements of connected components in the real part of real algebraic curves near the non-singular tropical limit. 

In its 16th problem, Hilbert \cite{hilbert1900mathematische} asked what are the realisable \emph{isotopy types} for a non-singular real algebraic curve $\mathcal{C}$ of degree 6 in the projective plane $\pr^2$, or said otherwise the realisable homeomorphism types for the pair $(\pr^2 (\R ) , \mathcal{C} (\R ))$.
This problem and its generalisation to higher degrees has been extensively studied during the 20th century \cite{hilbert1891ueber}, \cite{ragsdale1906arrangement}, \cite{petrowsky1938topology}, \cite{gudkov1969arrangement}, \cite{rokhlin1978complex}, \cite{viro1980curves}, \cite{itenberg1993contre}, \cite{orevkov2002classification}. 
In the projective plane case, the classification problem reduces to the realisable arrangements of \emph{ovals} and \emph{pseudo-lines}. 

\begin{definition}
\label{DefOvalGeneral}
Let $\mathcal{C}$ be a non-singular real algebraic curve in a non-singular projective toric surface $\pr_\Sigma$.
A connected component $C_0$ of $\mathcal{C} (\R)$ is called an \emph{oval} if $C_0$ divides $\pr_\Sigma (\R )$ into two connected components.
We say that an oval $C_0$ of $\mathcal{C}(\R )$ is \emph{even} if $C_0$ lies inside an even number of ovals of $\mathcal{C}(\R)$, and we say that $C_0$ is \emph{odd} otherwise. 
\end{definition} 

For $\mathcal{C}$ a non-singular real algebraic curve of even degree in $\pr^2$, the connected components of the real part $\mathcal{C} (\R )$ are all ovals.
In general, for $\pr_\Sigma$ a non-singular projective toric surface distinct from $\pr^2$, we need stronger restrictions in order to have only ovals in the real part of a non-singular real algebraic curve.
For $\mathcal{C} \subset \pr_\Sigma$ a real algebraic curve near the non-singular tropical limit $C$ with set of twisted edges $T$, we obtain that if every twisted edge in $T$ is non-exposed, the real part $\mathcal{C}(\R)$ consists only of ovals, see \Cref{PropTwistBoundOval}.
In particular, that condition does not depend on the ambient non-singular projective toric surface $\pr_\Sigma$.

In the dividing case, we can partition a set of twisted edges into \emph{multi-bridges}, see \Cref{DefMulti+Circuit} and \Cref{PropPartition}.
A multi-bridge $B$ on a non-singular tropical curve $C$ is a twist-admissible set of bounded edges of $C$ which are parallel modulo 2, such that the graph $C\backslash B$ is disconnected and for every proper subset $B' \subset B$, the graph $C\backslash B$ is connected.
These multi-bridges form a spanning family of the $\Z_2$-vector space $\Div (C)$ of dividing configuration of twists, \Cref{PropPartition}.
If we restrict to configuration of twists with only non-exposed twisted edges, we obtain a $\Z_2$-vector subspace $\Circuit (C) \subset \Div (C)$ generated by \emph{circuits} (see \Cref{DefMulti+Circuit}), which are multi-bridges with only non-exposed edges.
An element $T$ of $\Circuit (C)$ satisfies \Cref{PropTwistBoundOval}, hence we want to count the number of even and odd ovals of real algebraic curves near the non-singular tropical limit $C$ with set of twisted edges belonging to $\Circuit (C)$. 


Whenever a partition of a set of twisted edges $T$ contains a circuit $T'$ such that its dual $(T')^\vee \subset \Delta_C$ contains an integer point of the dual subdivision $\Delta_C$ of $C$ with both coordinates even, then untwisting the edges of $T'$ cannot decrease the number of even and odd ovals of the real part, see \Cref{PropEvenTwists}.
A similar statement in the maximal case was shown by Haas \cite[Theorem 10.6.0.5]{haas1997real}.
In order to construct non-singular real algebraic curves with many even or many odd ovals, we can then restrict to even-free configurations of twists.
A configuration of twists $T\in \Circuit (C)$ is \emph{even-free} if its the dual set of edges $T^\vee$ does not contain any integer point with both coordinates even in $\Delta_C \cap \Z^2$, for $\Delta$ the Newton polygon of $C$.
Note that the even-free circuits generate again a $\Z_2$-vector subspace $\EvFreeCirc (C) \subset \Circuit (C)$.

Itenberg \cite{itenberg1995counter} introduced a way to count the number of even and odd ovals of maximal curves of even degree in $\pr^2$ near the non-singular tropical limit, whenever the induced set of twisted edges on the tropical limit $C$ can be partitioned into even-free \emph{bridges}, that is multi-bridges with a single edge of $C$.
Haas \cite{haas1997real} extended this result to all maximal curves near the non-singular tropical limit, hence whenever the induced set of twisted edges can be partitioned into even-free bridges and even-free \emph{double-bridges}, that is multi-bridges with exactly two edges of $C$. 
Note that Itenberg and Haas' counts are expressed in terms of \emph{zone decomposition} of a Newton polygon (see \Cref{DefZone}), that is a decomposition of the Newton polygon induced by the set of edges dual to the set of twisted edges.
In the maximal case, these zone decompositions are induced by set of edges dual to bridges and double-bridges.
We extend those counts first to a partial count of even and odd empty ovals in the case when the tropical limit has set of twisted edges $T$ belonging to $\EvFreeCirc (C)$, see \Cref{LemEvenFreeCount}.

In order to control easily the number of connected components, we want to use a configuration of twists in $\EvFreeCirc (C)$ satisfying \Cref{CorM-2sDiv}.
Such a configuration of twists $T$ is dual to a disjoint union of complete bipartite graphs of the form $K_{2,2l}$ and complete tripartite graphs $K_{2,2,2}$, see \Cref{LemCircGraph}.    
We restrict to disjoint unions of complete bipartite graphs of the form $K_{2,2l}$ and obtain the following count of even and odd ovals.

\begin{theorem}[\Cref{ThEvenOddCountBipartite}]
\label{IntroThEvenOddCountBipartite}
Let $C$ be a non-singular tropical curve in $\T \pr_\Sigma$ such that its Newton polygon $\Delta$ (dual to $\Sigma$) has every edge of even lattice length.
Let $T := T_1 \sqcup \ldots \sqcup T_s \in \EvFreeCirc (C)$ be such that all $(T_i)^\vee \subset \Delta_C$ are disjoints and each $(T_i)^\vee$ is a complete bipartite graph of the form $K_{2,2l_i}$.
Let $Y_T := Y_1^T \sqcup Y_0^T$ be the zone partition (see \Cref{DefZone}) of $\Delta$ induced by $T$ such that $Y_1^T$ contains the unique zone $Z^T$ meeting the boundary of $\Delta$.
Let $p_j$ and $n_j$ be the number of even and odd interior integer points in $Y_j^T$.
Then for $\mathcal{C} \subset \pr_\Sigma$ a real algebraic curve near the non-singular tropical limit $C$ with set of twisted edges $T$, the numbers $p$ of even ovals and $n$ of odd ovals of $\mathcal{C}(\R )$ are 
\begin{align*}
p & = n_1 + p_0 + 1 + \sum\limits_{i=1}^s (l_i+1), \\
n & = p_1 + n_0 + \sum\limits_{i=1}^s (l_i-1) .
\end{align*}
\end{theorem}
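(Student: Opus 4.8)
The plan is to first reduce the statement to a parity count, and then treat the two geometric sources of ovals separately. Since $T \in \EvFreeCirc(C) \subset \Circuit(C)$ has only non-exposed twisted edges, \Cref{PropTwistBoundOval} guarantees that every connected component of $\mathcal{C}(\R)$ is an oval, so the whole content of the theorem is to decide, for each oval, whether it is even or odd. By \Cref{CorM-2sDiv} the curve $\mathcal{C}$ is a dividing $(M-2s)$-curve, so I know in advance that the total number of ovals equals $g+1-2s$, which I will use at the end as a consistency check. I would organise the ovals into two families: the \emph{empty ovals}, corresponding to interior integer points of $\Delta$ lying outside the dual graphs $(T_i)^\vee$, and the \emph{nested ovals} created locally by each circuit $T_i$.

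For the empty ovals I would invoke \Cref{LemEvenFreeCount}, which already expresses their even/odd count through the zone partition $Y_T = Y_1^T \sqcup Y_0^T$ of \Cref{DefZone}. The key mechanism is that crossing a wall of the zone decomposition (an edge dual to a twisted edge) flips the parity of the surrounding oval, so that the zone label records exactly the correction one must apply to the parity of an interior point. Concretely, an interior point in the label-$0$ family $Y_0^T$ yields an oval of the same parity as the point, while a point in the label-$1$ family $Y_1^T$ yields an oval of the opposite parity; this produces $n_1 + p_0$ even and $p_1 + n_0$ odd empty ovals. The single boundary zone $Z^T \subset Y_1^T$ contributes, in addition, the outermost oval, which lies inside no other oval and is therefore even, and this accounts for the constant $+1$ in the formula for $p$.

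The main work, and the step I expect to be the principal obstacle, is the local count of the nested ovals contributed by each circuit. Fix a circuit $T_i$ with $(T_i)^\vee = K_{2,2l_i}$; by \Cref{LemCircGraph} this is the generic shape of a dividing even-free circuit. I would analyse Viro's chart in a neighbourhood of the bipartite graph $(T_i)^\vee$: since the triangulation dual to $C$ is unimodular and $T_i$ is twist-admissible, the $K_{2,2l_i}$ configuration resolves into a chain of $2l_i$ nested ovals threaded through the quadrilateral cells of its planar embedding. The even-free hypothesis, namely that $(T_i)^\vee$ contains no integer point with both coordinates even, is precisely what pins down the nesting depths modulo $2$ via the reflection rule $\epsilon(\sigma_x^a\sigma_y^b\,\cdot) = \sigma_x^a\sigma_y^b\,\epsilon(\cdot)$ governing the gluing of the four quadrant-copies of $\Delta$, and \Cref{PropEvenTwists} confirms that this even-free regime is the one in which the twists genuinely create ovals rather than being absorbed. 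Carrying out this parity bookkeeping along the chain should yield exactly $l_i+1$ even and $l_i-1$ odd ovals per circuit, hence $\sum_{i=1}^s(l_i+1)$ even and $\sum_{i=1}^s(l_i-1)$ odd nested ovals in total. Establishing this exact split, rather than merely the total $2l_i$, is the delicate part, since it requires tracking how the even-free condition forces the alternation of parities along the chain.

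Finally I would assemble the two contributions. Adding the empty-oval count $n_1+p_0+1$ (even) and $p_1+n_0$ (odd) to the circuit count gives
\begin{align*}
p &= n_1 + p_0 + 1 + \sum_{i=1}^s (l_i+1), & n &= p_1 + n_0 + \sum_{i=1}^s (l_i-1),
\end{align*}
as claimed. As a final check, $p+n = (p_0+n_0+p_1+n_1) + 1 + 2\sum_{i=1}^s l_i$; since the $2s + 2\sum_{i=1}^s l_i$ interior points carried by the circuits $(T_i)^\vee$ are exactly those excluded from the zone count, this equals $g+1-2s$, matching the component count of the dividing $(M-2s)$-curve furnished by \Cref{CorM-2sDiv}.
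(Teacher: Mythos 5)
Your global strategy is the same as the paper's: split the ovals into those arising from twisted cycles disjoint from $T$ (counted by \Cref{LemEvenFreeCount}) and those arising from twisted cycles meeting some $T_i$, and your treatment of the first family, including the extra $+1$ for the special oval, is correct. The gap is entirely in the second family: the split into $l_i+1$ even and $l_i-1$ odd ovals per circuit is asserted (``should yield'') rather than proved, and it is precisely the content of the paper's \Cref{LemEvenOddTwistBipartite}, which you neither invoke nor actually reprove. Worse, the mechanism you sketch --- an ``alternation of parities along the chain'' --- would, taken at face value, give $l_i$ even and $l_i$ odd ovals: the planar embedding of $K_{2,2l_i}$ has $2l_i$ faces whose adjacency graph is a cycle, the two classes of the induced $2$-colouring (i.e.\ the zone partition restricted to these faces) have $l_i$ faces each, so a pure alternation argument cannot produce the asymmetric answer.

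The missing ingredients are exactly those supplied in the proof of \Cref{LemEvenOddTwistBipartite}. First, one must show there are \emph{exactly} $2l_i$ twisted cycles meeting $T_i$; the paper obtains this not from a local chart analysis but from the global component count: by \Cref{ThM-2} the circuit $T_i$ is a dividing $(M-2)$ configuration, so of the $g-1$ components all but $2l_i$ are accounted for by the $g-2l_i-2$ primitive cycles disjoint from $T_i$ together with the special twisted cycle of \Cref{LemSpecialOval}. Second, each of these $2l_i$ cycles is matched with a zone of the local decomposition of $K_{2,2l_i}$, and its parity is read off via \Cref{LemSignParity} from the Harnack/inverse-Harnack alternation of the sign distribution across zones; the crucial asymmetry is that the twisted cycle attached to the special zone $Z^T$ (edge support dual to the four outer edges of $K_{2,2l_i}$) is the outermost oval of the local nest and comes out \emph{even} although $Z^T$ lies in $Y_1^T$, which is what tips the count from $l_i$, $l_i$ to $l_i+1$, $l_i-1$. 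Without establishing this step your argument does not close; your final consistency check only verifies the total $2l_i$, not the even/odd split, so it cannot detect this error.
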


Let $R(k) = \frac{3k^2-3k}{2}$.
Ragsdale \cite{ragsdale1906arrangement} made the conjecture that a non-singular real algebraic curve $\mathcal{C}$ of degree $2k$ in $\pr^2$ has $p \leq R(k) + 1$ even ovals and $n \leq R(k)$ odd ovals.
Petrowsky \cite{petrowsky1938topology} proved that
\[ |p-n| \leq R(k) + 1    \]  
and that there exist some curves reaching this bound for any degree $2k$.
He then conjectured that $p \leq R(k) + 1$ and $n \leq R(k) +1$. 
Viro disproved Ragsdale's conjecture (but not Petrowsky's one) by constructing maximal curves of degree 8 with $n = R(k)+1$ \cite{viro1980curves}, and Itenberg disproved both Ragsdale and Petrowsky's conjectures by constructing via combinatorial patchworking curves with 
\[ p = R(k) + 1 + \left\lfloor \frac{(k-3)^2+4}{8} \right\rfloor \text{ and } n = R(k) + \left\lfloor \frac{(k-3)^2+4}{8} \right\rfloor . \]
Haas \cite{haas1995multilucarnes} gave another construction of counter-examples, again via combinatorial patchworking, giving rise to 
\[ p = R(k) + 1 + \left\lfloor \frac{k^2 -7k+16}{6} \right\rfloor  \text{ and } n = R(k) + \left\lfloor \frac{k^2 -7k+16}{6} \right\rfloor . \]
Brugall\'{e} \cite{brugalle2006real} constructed a family of non-singular real algebraic curves, with increasing degree $2k$, such that the family has asymptotically maximal number of even ovals with respect to the Harnack-Klein and Petrowsky inequalities, that is
\[  \lim\limits_{k\rightarrow +\infty } \frac{p}{k^2} = \frac{7}{4} . \]
Moreover, Renaudineau gave a tropical construction of a family with asymptotically maximal number of even ovals \cite{renaudineau2017tropical}.
Hence one of the remaining open problem is to determine the maximal number of even ovals constructible in low degree.
Using \Cref{IntroThEvenOddCountBipartite}, we construct some new counter-examples to Ragsdale conjecture.

\begin{theorem}[Theorem \ref{ThCounterRagsdale}]
\label{IntroThRagsdale}
There exists a dividing $(M-2 \left\lfloor \frac{k-3}{2} \right\rfloor )$  non-singular real algebraic curve of degree $2k \geq 10$ in $\pr^2$ with
\[ R (k) + 1 + \frac{k^2 - 5k + s(k)}{6}  \]
even ovals, with $s(k)$ determined by the value of $k$ modulo 6 as follows.
\begin{itemize}
\item If $k = 0 \mod 6$, then $s(k) = 0$.
\item If $k = 1 \mod 6$, then $s(k) = 10$.
\item If $k = 2 \mod 6$, then $s(k) = 8$.
\item If $k = 3,5 \mod 6$, then $s(k) = 6$.
\item If $k = 4 \mod 6$, then $s(k) = 4$.
\end{itemize} 
\end{theorem}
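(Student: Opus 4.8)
The plan is to realise the theorem as an explicit instance of \Cref{ThEvenOddCountBipartite}. Since the curve has degree $2k$ in $\pr^2$, its Newton polygon is the triangle $\Delta = \conv\{(0,0),(2k,0),(0,2k)\}$, each of whose edges has lattice length $2k$; this is even, so the edge-length hypothesis of \Cref{ThEvenOddCountBipartite} holds automatically. It therefore suffices to build, for every $k$ with $2k \geq 10$, a non-singular tropical curve $C$ in $\T\pr^2$ with Newton polygon $\Delta$, together with a configuration of twisted edges $T = T_1 \sqcup \ldots \sqcup T_s \in \EvFreeCirc(C)$ such that the duals $(T_i)^\vee \subset \Delta_C$ are pairwise disjoint complete bipartite graphs of the form $K_{2,2l_i}$, and such that $s = \lfloor \frac{k-3}{2}\rfloor$. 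By \Cref{CorM-2sDiv} the corresponding patchworked real curve $\mathcal{C}$ is then a dividing $(M-2s)$ curve, which accounts for the claimed topological type; the even-oval count will follow by evaluating the formula of \Cref{ThEvenOddCountBipartite}.

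The heart of the argument is the combinatorial construction of $C$ and $T$. I would fix a primitive convex triangulation of $\Delta$, so that it is the dual subdivision of a genuine non-singular tropical curve, and arrange the $s$ bipartite circuits in a triangular array filling $\Delta$ from the interior. Each block $(T_i)^\vee = K_{2,2l_i}$ is placed so that its vertices avoid integer points with both coordinates even, guaranteeing $T \in \EvFreeCirc(C)$, and so that the blocks are pairwise disjoint as required by \Cref{LemCircGraph}. The sizes $l_i$ are chosen as large as the packing allows: intuitively the blocks form nested rows whose widths decrease linearly, so that $\sum_i (l_i+1)$ grows like a triangular number of order $\tfrac{k^2}{6}$, with the precise boundary correction depending on how many full rows fit, i.e. on the residue of $k$ modulo $6$. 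This is where the division by $6$ and the five-case value of $s(k)$ originate.

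With the configuration in hand, I would compute the three quantities entering \Cref{ThEvenOddCountBipartite}. The term $\sum_{i=1}^s (l_i+1)$ is read off directly from the chosen block sizes. The numbers $n_1$ (odd interior points of $Y_1^T$) and $p_0$ (even interior points of $Y_0^T$) are obtained from the zone decomposition $Y_T = Y_1^T \sqcup Y_0^T$ of \Cref{DefZone} induced by $T$: one counts integer points of each parity inside the zones, using that the total number of interior points of $\Delta$ equals the genus $g = (k-1)(2k-1)$ and that the even interior points number $\tfrac{(k-1)(k-2)}{2}$. Substituting into $p = n_1 + p_0 + 1 + \sum_i (l_i+1)$ and regrouping the quadratic and linear terms so as to isolate $R(k) = \tfrac{3k^2-3k}{2}$ then yields $p = R(k) + 1 + \tfrac{k^2-5k+s(k)}{6}$, with $s(k)$ emerging from the modular bookkeeping of the packing.

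The main obstacle is the explicit packing together with the attendant lattice-point accounting: one must simultaneously keep every block even-free and the blocks pairwise disjoint, force exactly $s = \lfloor\frac{k-3}{2}\rfloor$ blocks so as to pin down the deficiency $(M-2s)$, and maximise $\sum_i(l_i+1)$ together with the zone counts $n_1, p_0$. Verifying that the triangulation stays convex, hence geometrically realisable as a non-singular tropical curve, throughout the packing, and checking the five residue classes modulo $6$ separately, are the delicate points; the remaining summations are routine. A sanity check is that the extra term $\tfrac{k^2-5k+s(k)}{6}$ is strictly positive for all $2k\geq 10$, so $p > R(k)+1$ and the curve indeed violates Ragsdale's conjecture, while its leading behaviour $p \sim \tfrac{5}{3}k^2$ sits below Brugall\'e's asymptotic $\tfrac{7}{4}k^2$ and eventually exceeds Haas' earlier count.
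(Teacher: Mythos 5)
Your proposal correctly identifies the paper's strategy --- stack pairwise disjoint even-free complete bipartite blocks $K_{2,2l}$ in horizontal rows inside $\Delta_{2k}$, get the deficiency from \Cref{CorM-2sDiv}, and extract the even-oval count from the bipartite counting formula with mod-$6$ bookkeeping --- but it stops exactly where the theorem's content begins. Everything you flag as ``the main obstacle'' is in fact the entire proof: the paper specifies one block $K_{2,4m_t}$ per row at height $4t+3$ for $t=0,\dots,\lfloor\frac{k-5}{2}\rfloor$ (so $s=\lfloor\frac{k-3}{2}\rfloor$ blocks, one per row, not a two-dimensional triangular array), with the $4m_t$ middle vertices on that row, the two apex vertices at heights $4t$ and $4t+6$ with odd abscissae, explicit congruence conditions on the endpoints, and length maximal subject to $L_t\equiv 4 \bmod 6$; it then computes the gain per block as $2m_t-1$ via \Cref{LemCounterEx}, sums $\sum_t(2m_t-1)=\frac{k^2-7k+12}{6}$ plus a correction term $\frac{2}{3}N_1+\frac{4}{3}N_2$ counting rows with $x_t\equiv 1,2 \bmod 3$, and evaluates $N_1,N_2$ in each of the six residue classes. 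None of this is reconstructible from the qualitative description ``nested rows whose widths decrease linearly''; in particular the claim that exactly $s=\lfloor\frac{k-3}{2}\rfloor$ blocks fit and that $\sum_i(l_i+1)$ attains the stated value is asserted, not derived.

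There is also a concrete technical point your route would trip over. To maximise the block lengths the paper lets the rightmost vertex of a block reach the hypotenuse of $\Delta_{2k}$ (e.g.\ $E_t^{4l}=(x_t,4t+3)$ when $x_t\equiv 2\bmod 3$). The corresponding twisted edges are then exposed, so $T$ is \emph{not} in $\EvFreeCirc(C)$ and \Cref{ThEvenOddCountBipartite} does not apply verbatim, contrary to your plan of invoking it directly. The paper absorbs this in \Cref{LemCounterEx} by a case analysis on the subgraph of non-exposed twisted edges ($K_{2,4m}$, $K_{2,4m-1}$, $K_{1,4m}$ or $K_{1,4m-1}$), checking that the special oval coincides with one of the ovals produced by the boundary-touching zone so that the count $R(k)+2m$ survives. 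If instead you keep every block strictly interior so as to stay inside $\EvFreeCirc(C)$, you shorten the blocks and the resulting count falls below the stated $R(k)+1+\frac{k^2-5k+s(k)}{6}$. Your sanity checks at the end (positivity of the excess, $p\sim\frac{5}{3}k^2$ below Brugall\'e's $\frac{7}{4}k^2$) are correct, but the proof as written has a genuine gap: the explicit configuration, the boundary case analysis, and the closed-form summation are all missing.
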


For instance, we construct in \Cref{ExDeg14} a non-singular real algebraic curve of degree 14 in $\pr^2$ with 68 even ovals, hence 4 even ovals more than Ragsdale's conjecture (see also \Cref{FigRagsdale14} for the construction and \Cref{RealPart142Article3} for the isotopy type of the real part).
As an intermediate step for the general construction, we construct in \Cref{LemCounterEx} a dividing $(M-2)$ non-singular real algebraic curve of degree $2k\geq 10$ in $\pr^2$ with more even ovals than Ragsdale's conjecture.
As we see in \Cref{ExRagsdale14}, we obtain in degree 14 a dividing $(M-2)$ non-singular real algebraic curve with 67 even ovals, hence 3 even ovals more than Ragsdale's conjecture (see also \Cref{FigCounter14} for the construction).  

The article will be organised as follows.
We begin by recalling standard notions about \emph{tropical curves} in \Cref{Section2}.
We define then in \Cref{Section3} the real part of a tropical curve in terms of \emph{Viro's combinatorial patchworking}, and introduce the notion of \emph{twisted cycle} on a non-singular tropical curve with twisted edges, which allows us to recover the data of a connected component of the real part directly on the tropical curve.
In \Cref{Sec4}, we prove \Cref{IntroThM-1} and \Cref{IntroThM-2} using a reformulation of the results of Renaudineau and Shaw \cite{renaudineau2018bounding} in the case of curves (see \Cref{ThExactSeq} and \Cref{ThConnectHom+}).
In \Cref{SectionAllOvals}, we give sufficient conditions for a non-singular real algebraic curve near the tropical limit to have only ovals in its real part (\Cref{PropNewtonOval} , \Cref{PropTwistBoundOval}).
Finally, in \Cref{SectionEvenOddOval}, we prove the count of even and odd ovals from \Cref{IntroThEvenOddCountBipartite}, justifying first the choice of restrictions to the configurations of twists we consider (\Cref{PropEvenTwists}, \Cref{LemCircGraph}), and then we use the count to construct new counter-examples to Ragsdale's conjecture (\Cref{IntroThRagsdale}).

\noindent \textbf{Acknowledgements.}
The author would like to thank Kris Shaw and Fr\'{e}d\'{e}ric Bihan for their interest, support and useful discussions in this project.
The research of the author is supported by the Trond Mohn Stiftelse (TMS) project ``Algebraic and topological cycles in complex and tropical geometry".

\section{Tropical curves}
\label{Section2}

We recall in this section several definitions in tropical geometry.
For more details and examples, one can read \cite[Section 2]{brugalle2015brief}, \cite[Chapter 1]{itenberg2009tropical}, \cite{maclagan2009introduction}.

\subsection{Tropical curves in $\R^2$}

A \emph{tropical polynomial in two variables} is 
\[ P(x,y) = \max\limits_{(i,j) \in A} (a_{i,j} + ix +jy) ,    \]
where $A$ is a finite subset of $(\Z_{\geq 0})^2$ and the coefficients $a_{i,j}$ are in the tropical semi-ring $\T = (\R \cup \{ -\infty \} , \max , +)$.
Note that we use the $``\max "$ operation, while some references in the literature use the $``\min "$ operation instead.
Thus, a tropical polynomial is a convex piecewise affine function, with corner locus 
\[ V_P = \{ (x_0 , y_0)\in \R^2 ~ | ~ \exists (i,j)\neq (k,l), ~  a_{i,j} + ix_0 + jy_0  =  a_{k,l} +  kx_0 + ly_0  \} .  \]
The set $V_P$ is a 1-dimensional polyhedral complex in $\R^2$.

\begin{definition}
The \emph{weight function} is defined on the edges $e$ of $V_P$ as 
\[ w(e) := \max\limits_{(i,j) , (k,l)} ( \gcd (|i-k|,|j-l|))     \]
for all pairs $(i,j)$ and $(k,l)$ such that the value of $P(x,y)$ on $e$ is given by the corresponding monomials.
The \emph{tropical curve} $C \subset \R^2$ defined by $P(x,y)$ is the 1-dimensional polyhedral complex $V_P$ equipped with the weight function $w$ on the edges.
\end{definition} 


\subsection{Dual Subdivision}

Let $P(x,y) = \max\limits_{i,j \in A} (a_{i,j} +  ix + jy)$ be a tropical polynomial defining a tropical curve $C$.
The \emph{Newton polygon} of $P(x,y)$, denoted by $\Delta (P)$, is the convex lattice polygon given by
\[ \Delta (P) := \Conv \{  (i,j) \in (\Z_{\geq 0})^2 ~ | ~ a_{i,j} \neq -\infty \} \subset \R^2 .   \]
The tropical polynomial $P$ also determines a subdivision of $\Delta (P)$, as follows.
Given $(x_0 , y_0) \in \R^2$, let
\[ \Delta (x_0 ,y_0 ) := \Conv \{ (i,j) \in (\Z_{\geq 0})^2 ~ | ~ P(x_0 , y_0) =  a_{i,j} + i x_0 +j y_0 \} \subset \Delta (P) .  \]
The tropical curve $C$ induces a polyhedral decomposition of $\R^2$, and the polytope $\Delta (x_0 ,y_0 )$ only depends on the cell $F \ni (x_0 ,y_0)$ of the decomposition given by $C$.
Thus, we define the face $F^\vee := \Delta (x_0 ,y_0 )$ dual to the face $F$ of the polyhedral subdivision of $\R^2$ induced by $C$, for $(x_0 , y_0) \in F$.
The \emph{dual subdivision} $\Delta_C$ of a non-singular tropical curve $C$ is the union
\[  \bigcup_F  F^\vee  ,  \]
for $F$ running through the faces of the decomposition of $\R^2$ induced by $C$.

\begin{definition}
We say that a tropical curve $C$ is \emph{non-singular} if its dual subdivision $\Delta_C$ is a triangulation with each 2-dimensional cell of Euclidean area $1/2$. 
\end{definition}

Equivalently, a tropical curve $C \subset \R^2$ with Newton polytope $\Delta$ is non-singular if and only if $C$ has $2 \Area (\Delta )$ vertices.
In particular, if a tropical curve $C \subset \R^2$ is non-singular, every vertex of $C$ is 3-valent and every edge $e$ of $C$ has weight $w(e) = 1$, and all integer points of $\Delta_C$ are vertices in the subdivision.

\subsection{Tropical curves as limit of amoebas}

We want to see that any tropical curve can be approximated by algebraic curves in $(\C^\times)^2$, via the logarithm map. 
Consider the following map, where $t\in \R_{>0}$:
\begin{align*}
\Log_t :  (\C^\times )^2 & \rightarrow \R^2 \\
 (z,w) & \mapsto (\log_t |z| , \log_t |w| ) .
\end{align*}

We call the image of a subset $V\subset (\C^\times )^2$ via $\Log_t$ the \emph{amoeba} of $V$ in base $t$ (see for instance \cite{gelfand2014discriminants}).

\begin{theorem}[\cite{kapranov2000amoebas}]
\label{ThKapMik2}
Let $\mathcal{P}_t (z,w) = \sum_{i,j} \alpha_{i,j}(t) z^i w^j$ be a polynomial whose coefficients are functions $\alpha_{i,j} : \R \rightarrow \C$, and suppose that $\alpha_{i,j}(t) \sim \beta_{i,j} t^{-a_{i,j}}$ when $t \in \R_{> 0}$ tends to $0$, with $\beta_{i,j} \in \C^\times$ and $a_{i,j} \in \T$.
If $\mathcal{C}_t$ denotes the curve in $(\C^\times )^2$ defined by the polynomial $\mathcal{P}_t$, then the amoebas $\Log_t (\mathcal{C}_t )$ converge to the tropical curve defined by the tropical polynomial $P (x,y) = \max_{i,j} (a_{i,j} +ix + jy)$ when $t \in \R_{> 0}$ tends to $0$.
\end{theorem}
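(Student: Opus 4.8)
The plan is to show that the closed sets $\Log_t(\mathcal{C}_t) \subset \R^2$ converge, as $t \to 0^+$, to $V_P$ in the Hausdorff sense on every compact subset of $\R^2$. Everything rests on identifying, at a point $(x,y) \in \R^2$, which monomials of $\mathcal{P}_t$ dominate in modulus once we impose $\Log_t(z,w) = (x,y)$, i.e.\ $|z| = t^x$ and $|w| = t^y$. For such $(z,w)$ the modulus of the $(i,j)$-th term is $|\alpha_{i,j}(t)|\, t^{ix + jy}$, and by the assumed asymptotics $\alpha_{i,j}(t) \sim \beta_{i,j} t^{-a_{i,j}}$ together with $\log_t|\beta_{i,j}| \to 0$, its base-$t$ logarithm tends to $f_{i,j}(x,y) := -a_{i,j} + ix + jy$. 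Since $t \in (0,1)$, the term of largest modulus is the one minimising $f_{i,j}$, and the locus where the minimum is attained by at least two indices is precisely $V_P$ (after reconciling the $\max$ defining $P$ with the $\min$ that governs dominant moduli for $t < 1$). The proof then splits into the two standard inclusions.

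For the inclusion stating that no point outside $V_P$ is approached, fix $(x_0,y_0) \notin V_P$. A single index $(i_0,j_0)$ then strictly minimises $f_{i,j}(x_0,y_0)$, so there are a neighbourhood $U$ of $(x_0,y_0)$ and a constant $\delta > 0$ with $f_{i_0,j_0} \leq f_{i,j} - \delta$ on $U$ for every other index $(i,j)$. For $(z,w)$ with $\Log_t(z,w) \in U$ and $t$ small, the single term $|\alpha_{i_0,j_0}(t) z^{i_0} w^{j_0}|$ exceeds the sum of the moduli of all remaining terms, each smaller by a factor comparable to $t^{\delta} \to 0$; hence $\mathcal{P}_t(z,w) \neq 0$ and $\Log_t(\mathcal{C}_t) \cap U = \emptyset$ for $t$ small. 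This half is routine.

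The substantial half is the reverse inclusion: every $(x_0, y_0) \in V_P$ is a limit of points of $\Log_t(\mathcal{C}_t)$. I would first treat a point $(x_0,y_0)$ in the relative interior of an edge $E$ of $V_P$, where exactly two indices $(i_1,j_1)$ and $(i_2,j_2)$ attain the minimum of $f_{i,j}(x_0,y_0)$ and dominate all others by a margin $\delta > 0$. The leading part of $\mathcal{P}_t$ is then the binomial $\alpha_{i_1,j_1} z^{i_1} w^{j_1} + \alpha_{i_2,j_2} z^{i_2} w^{j_2}$, which vanishes exactly where the ratio monomial $\rho := z^{i_1 - i_2} w^{j_1 - j_2}$ equals $-\alpha_{i_2,j_2}/\alpha_{i_1,j_1}$; note that $\log_t|\alpha_{i_2,j_2}/\alpha_{i_1,j_1}| \to a_{i_1,j_1} - a_{i_2,j_2}$, which is exactly the defining equation of $E$. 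I would fix the modulus of one coordinate to select the position of $(x_0,y_0)$ along $E$, treat $\rho$ as the coordinate on a complex slice, and apply Rouch\'e's theorem on a small circle enclosing the root of the binomial: there the binomial is bounded below while the subdominant terms are smaller by a factor comparable to $t^{\delta} \to 0$, so $\mathcal{P}_t$ has a genuine zero $(z,w)$ on the slice with $\Log_t(z,w) \to (x_0,y_0)$.

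The main obstacle is turning this local construction into a uniform statement and handling the non-generic loci. Making the Rouch\'e comparison rigorous requires choosing the slice and the radius so that the root of the binomial is genuinely enclosed and the subdominant bound holds uniformly in $t$ on the whole circle, which forces one to argue along the edge $E$ rather than at the isolated point $(x_0,y_0)$ and to use the margin $\delta$ from the easy half as an explicit estimate. At a vertex of $V_P$ three monomials tie, and I would localise to one incident edge so that a two-term comparison still controls the leading behaviour. Upgrading the two pointwise inclusions to Hausdorff convergence on each compact subset of $\R^2$ is then a standard compactness argument, and throughout one must carry the sign convention of the base-$t$ logarithm (for $t < 1$) consistently so that the limiting set is $V_P$ exactly as stated. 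This is the amoeba-degeneration form of Kapranov's theorem recorded in \cite{kapranov2000amoebas}, whose proof I would follow.
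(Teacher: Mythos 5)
The paper does not actually prove this statement: \Cref{ThKapMik2} is quoted from Kapranov and used as a black box, so there is no internal proof to compare against. Your sketch is the standard argument for this theorem (a dominant-monomial estimate for the easy inclusion, a binomial leading part plus Rouch\'e along a one-parameter slice for the reverse inclusion, and density of edge-interior points to handle vertices), and that strategy does work; the Rouch\'e step is a plan rather than a completed argument, but you correctly identify where the care is needed (choosing the slice transverse to the fibres of the ratio monomial, perturbing the radius so the binomial is bounded below on the circle, and making the subdominant bound uniform along the edge).

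There is, however, one concrete error that you should not dismiss with ``after reconciling the $\max$ defining $P$ with the $\min$ that governs dominant moduli'': under the stated conventions the limit set is not $V_P$ but its image under $(x,y)\mapsto(-x,-y)$. Indeed $\min_{(i,j)}(-a_{i,j}+ix+jy)=-\max_{(i,j)}(a_{i,j}-ix-jy)$, because negating an affine function flips the linear part as well as the constant; hence the locus where your minimum is attained by two indices is the corner locus of $\max_{(i,j)}(a_{i,j}-ix-jy)$, i.e.\ $-V_P$, not $V_P$. A one-line check: for $\mathcal{P}_t(z,w)=z+t^{-1}$ one has $a_{0,0}=1$ and $a_{1,0}=0$, so $V_P=\{x=1\}$, while $\Log_t(\mathcal{C}_t)=\{x=-1\}$ for every $t$. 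Either work with $\log_{1/t}=-\log_t$ (equivalently let $t\to+\infty$), in which case the dominant term \emph{maximises} $a_{i,j}+ix+jy$ and the limit is genuinely $V_P$, or accept that the statement as printed is off by a central reflection. The discrepancy is harmless for the rest of the paper, since it only reflects the whole picture, but as written your argument establishes convergence to $-V_P$ and then asserts, incorrectly, that this set coincides with $V_P$.
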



\subsection{Tropical curve in a tropical toric surface}

We refer to \cite{maclagan2009introduction} and \cite{mikhalkin2009tropical} for more details on tropical toric varieties.
Let $\Sigma$ be a fan in $\R^2$, and let $\sigma$ be a cone of $\Sigma$.
We denote $U_\sigma := \T^{\dim \sigma} \times \R^{2-\dim \sigma}$.
For $\tau$ a face of $\sigma$, we denote by $\phi_{\sigma ,\tau }$ the embedding $U_\tau \rightarrow U_\sigma$ described in \cite{mikhalkin2009tropical}, and let $U_\sigma^\tau := \phi_{\sigma ,\tau} (U_\tau )$.

\begin{definition}[\cite{mikhalkin2009tropical}]
Let $\Sigma$ be a complete unimodular pointed fan in $\R^2$.
The \emph{non-singular projective tropical toric surface} associated to $\Sigma$ is the topological space 
\[ \T \pr_\Sigma := \bigcup\limits_{\sigma \in \Sigma} U_\sigma / \sim   \]
stratified by the fan $\Sigma$, where $\sim$ is given by $x\sim \phi_{\sigma , \sigma '} (x)$ for all $\sigma , \sigma ' \in \Sigma$ and all $x\in U_{\sigma '}^\tau$. 
\end{definition}

The stratification on $\T \pr_\Sigma$ is compatible with the stratification on the usual non-singular projective toric variety $\pr_\Sigma$ defined by the fan $\Sigma$.
In particular, if $\Delta$ is the 2-dimensional polytope dual to the complete unimodular pointed fan $\Sigma$, there is a one-to-one correspondence between the $n$-dimensional strata of $\T \pr_\Sigma$ and the $n$-dimensional strata of $\Delta$, for $0\leq n \leq 2$.
The tropical toric surface $\T \pr_\Sigma$ can be seen as a compactification of the tropical algebraic torus $(\T^\times)^2 = \R^2$, and has maps to affine charts $\T^2$.
 
\begin{definition}
\label{DefCompTropToric}
Let $C'$ be a non-singular tropical curve in $\R^2$ with Newton polygon $\Delta$.
Let $\T \pr_\Sigma$ be the non-singular projective tropical toric surface such that $\Sigma$ is the dual fan of $\Delta$.
We extend $C'$ to a \emph{non-singular tropical curve} $C$ in $\T \pr_\Sigma$ by compactification of the unbounded edges of $C'$ (in particular, the 1-valent vertices created by the compactification lie on the 1-dimensional strata of $\T \pr_\Sigma$).  
\end{definition}

The choice of compactification in \Cref{DefCompTropToric} allows us to work with non-singular tropical curves $C$ \emph{combinatorially ample} in a non-singular projective tropical toric surface $\T \pr_\Sigma$, see \cite[Definition 2.3]{arnal2019lefschetz}.

\begin{figure}
\centering
\begin{subfigure}[t]{0.45\linewidth}
	\centering
	\includegraphics[width=0.7\textwidth]{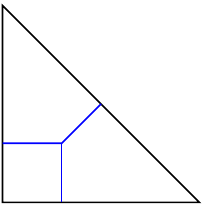}
	\caption{Tropical line in $\T \pr^2$.}
	\label{tropline+}
\end{subfigure}\hfill
\begin{subfigure}[t]{0.45\linewidth}
	\centering
	\includegraphics[width=0.7\textwidth]{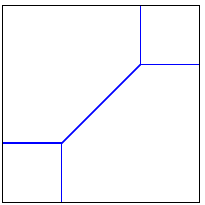}
	\caption{Tropical curve of bidegree $(1,1)$ in $\T \pr^1 \times \T \pr^1$.}
	\label{tropbideg11}
\end{subfigure}
\caption{Example \ref{ExTropLine+}.}
\end{figure}

\begin{example}
\label{ExTropLine+}
Figure \ref{tropline+} represents a tropical line $L$ inside the tropical projective plane $\T \pr^2$. 
Similarly, we have on Figure \ref{tropbideg11} a non-singular tropical curve of bidegree $(1,1)$ on $\T \pr^1 \times \T \pr^1$.
\end{example}

\begin{definition}
Let $C'$ be a non-singular tropical curve in $\R^2$, and let $C$ be its compactification inside the non-singular projective tropical toric surface $\T \pr_\Sigma$, for $\Sigma$ the fan dual to the Newton polygon of $C$.
We say that an edge $e$ of $C$ is \emph{bounded} if $e$ is bounded in $C'$, and \emph{unbounded} if $e$ is unbounded in $C'$.
The set of all edges of $C$ is denoted $\Edge (C)$, and the subset of bounded edges is denoted $\Edge^0 (C)$.
\end{definition}


\section{Real part of a non-singular tropical curve}
\label{Section3}

Given a non-singular tropical curve $C$ and some additional data, we want to construct the \emph{real part} of $C$ with respect to the additional data.
One way to do this is via the combinatorial version of \emph{Viro's patchworking} \cite{viro1980curves}.
We will see that in the case of tropical curves, an equivalent way is given in terms of \emph{amoeba} and \emph{twisted edges}, as seen in \cite{brugalle2015brief}.
We will then see that a set of twisted edges on a tropical curve is equivalent, up to an element of $\Z_2^2$, to the necessary data for combinatorial patchworking.

\subsection{Combinatorial patchworking}

Let $\mathcal{R}^2$ be the group of symmetries in $\R^2$ generated by the reflections with respect to the coordinate hyperplanes.
For $\Z_2:= \Z / 2\Z$, we identify $\mathcal{R}^2$ with $\Z_2^2$ as follows.
If $\varepsilon = (\varepsilon_1 , \varepsilon_2 ) \in \Z_2^2$, then $\varepsilon \in \mathcal{R}^2$ is the symmetry defined by 
\[ \varepsilon (x,y) = ((-1)^{\varepsilon_1} x , (-1)^{\varepsilon_2} y) , \]
for $\varepsilon_1$ and $\varepsilon_2$ seen as integers.
Throughout the article, for $A$ a topological set, we will denote by $A^*$ the disjoint union of symmetric copies
\[ A^* := \bigsqcup\limits_{\varepsilon \in \mathcal{R}^2} \varepsilon (A). \]

\begin{theorem}[\cite{gelfand2014discriminants}]
\label{ThTropToric}
Let $\pr_\Sigma$ be a non-singular projective toric surface, and let $\Delta$ be the 2-dimensional polytope dual to the fan $\Sigma$.
The real part $\pr_\Sigma (\R )$ of $\pr_\Sigma$ is homeomorphic to the quotient of $\Delta^*$ under the following identifications:
if $\Gamma$ is a face of $\Delta^*$, then for all integer vectors $(\alpha_1 , \alpha_2 )$ orthogonal to the primitive integer directions in $\Gamma$, the face $\Gamma$ is identified with $\varepsilon (\Gamma )$, where $\varepsilon = (\bar{\alpha_1},\bar{\alpha_2})$ is the class of $(\alpha_1 , \alpha_2 )$ in $\mathcal{R}^2 \equiv \Z_2^2$.
\end{theorem}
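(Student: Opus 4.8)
The plan is to realise $\pr_\Sigma(\R)$ as the union of the four translates of its non-negative part under the sign group $\mathcal{R}^2\cong\Z_2^2$, to identify each translate with a copy of $\Delta$ via the moment map, and then to read off the gluing $\sim$ from the stabilisers of the boundary torus orbits. Write $N=\Z^2$ for the lattice of one-parameter subgroups of the torus $T=(\C^\times)^2$ and $M=\Z^2$ for its dual, so that $\Delta\subset M_\R$ and $\langle\,,\rangle\colon N\times M\to\Z$ is the tautological pairing. The real torus is $T(\R)=N\otimes\R^\times=\Hom(M,\R^\times)$, and the splitting $\R^\times=\{\pm 1\}\times\R_{>0}$ yields $T(\R)=\mathcal{R}^2\times(\R_{>0})^2$ with $\mathcal{R}^2=N/2N$; under this identification the class $\varepsilon=\bar\alpha$ of $\alpha\in N$ acts on the monomial coordinate $\chi^m$ ($m\in M$) by the sign $(-1)^{\langle\alpha,m\rangle}$.

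First I would invoke the standard fact that for a non-singular projective toric surface the moment map restricts to a homeomorphism $\mu\colon\pr_\Sigma(\R_{\geq 0})\xrightarrow{\ \sim\ }\Delta$, carrying the orbit $O_\Gamma$ of $\pr_\Sigma$ associated to a face $\Gamma$ of $\Delta$ onto the relative interior of $\Gamma$. Since every point of $(\R^\times)^2$ is $\varepsilon$ times a point of $(\R_{>0})^2$, and $(\R_{>0})^2$ is dense in $\pr_\Sigma(\R_{\geq 0})$, the translates $\varepsilon\cdot\pr_\Sigma(\R_{\geq 0})$, for $\varepsilon\in\mathcal{R}^2$, cover $\pr_\Sigma(\R)$. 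Identifying each translate with a copy of $\Delta$ then gives a continuous surjection $\Delta^*\to\pr_\Sigma(\R)$, and it remains to show that two points $\varepsilon(x)$ and $\varepsilon'(x)$ coincide in $\pr_\Sigma(\R)$ exactly when the stated relation holds. Since $\mathcal{R}^2$ has exponent $2$, this amounts to computing, for $x$ in the relative interior of a face $\Gamma$, the stabiliser $\operatorname{Stab}(x)=\{\delta\in\mathcal{R}^2:\delta\cdot x=x\}$ and checking that $\varepsilon(x)=\varepsilon'(x)$ iff $\varepsilon\varepsilon'\in\operatorname{Stab}(x)$.

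The heart of the argument is this stabiliser computation. A point $x$ in the relative interior of $\Gamma$ lies in the orbit $O_\Gamma$, whose non-vanishing monomial coordinates are exactly the $\chi^m$ with $m$ in the sublattice $L_\Gamma\subseteq M$ spanned by the primitive integer directions of $\Gamma$. By the sign-action formula above, $\varepsilon=\bar\alpha$ fixes $x$ iff $\langle\alpha,m\rangle$ is even for every $m\in L_\Gamma$. I would then check that in rank $\leq 2$ this mod-$2$ orthogonality condition selects exactly the classes $\bar\alpha$ of integer vectors $\alpha$ with $\langle\alpha,m\rangle=0$ for the primitive directions $m$ in $\Gamma$: for $\dim\Gamma=0$ both conditions are vacuous and $\operatorname{Stab}(x)=\mathcal{R}^2$; for $\dim\Gamma=2$ both force $\bar\alpha=0$; and for an edge with primitive direction $m_0$, the single congruence $\langle\alpha,m_0\rangle\equiv 0$ has the same nonzero solution $\bar\alpha_0$ in $\mathcal{R}^2$ as the exact equation $\langle\alpha,m_0\rangle=0$, precisely because $m_0$ is primitive. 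This is exactly the indexing set in the statement, so $\operatorname{Stab}(x)$ equals $\{\bar\alpha:\alpha\perp L_\Gamma\}$ and the fibrewise relation of the surjection matches $\sim$.

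Finally I would conclude formally: the induced map $\Delta^*/{\sim}\to\pr_\Sigma(\R)$ is a continuous bijection from a compact space to a Hausdorff space, hence a homeomorphism. The main obstacle is the bookkeeping in the stabiliser step, namely keeping the pairing between the character lattice $M$ (where $\Delta$ and its faces live) and the sign lattice $N/2N$ straight, correctly identifying which coordinates vanish on $O_\Gamma$, and in particular verifying the coincidence of exact and mod-$2$ orthogonality that makes the clean statement (orthogonal integer vectors reduced mod $2$) agree with the genuine stabiliser.
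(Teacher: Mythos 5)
Your argument is correct and is essentially the standard proof of this result: the paper itself gives no proof but cites \cite{gelfand2014discriminants}, where the statement is established exactly by your route --- the moment-map homeomorphism $\pr_\Sigma(\R_{\geq 0})\cong\Delta$, the covering of $\pr_\Sigma(\R)$ by the $\mathcal{R}^2$-translates of the non-negative part, and the computation of the stabiliser of a point in the orbit $O_\Gamma$ as the mod-$2$ reduction of the annihilator of the lattice of directions of $\Gamma$. Your key observation that exact and mod-$2$ orthogonality give the same subgroup of $\Z_2^2$ in each dimension (using primitivity of the edge direction in the rank-one case) is precisely what makes the clean statement hold, and the compact-to-Hausdorff conclusion is sound.
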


Let $C$ be a non-singular tropical curve in a non-singular projective tropical toric surface $\T \pr_\Sigma$, such that $C$ has Newton polygon $\Delta$ dual to the fan $\Sigma$.
Recall that we have a one-to-one correspondence between the $n$-dimensional strata of $\T \pr_\Sigma$ and the $n$-dimensional strata of $\Delta$.
We denote by $\R \pr_\Sigma$ the quotient of $\T \pr_\Sigma^*$ under the identifications of Theorem \ref{ThTropToric}.
Let $\Delta_C$ be the dual subdivision of $\Delta$ associated to $C$.
Let 
\[ \delta : \Delta_C \cap \Z^2 \rightarrow \{ +1,-1 \} \]
be a distribution of signs on the integer points of the dual subdivision $\Delta_C$ (see for example Figure \ref{SignDistrib+}).
We extend $\delta$ to a distribution of signs on the integer points of the disjoint union of symmetric copies $\Delta_C^*$ following the rule:
for $v = (v_1, v_2 )$ an integer point in $\Delta_C^* \cap \Z^2$ and $\varepsilon = (\varepsilon_1 , \varepsilon_2) \in \mathcal{R}^2$ a symmetry, we have
\[
\delta (\varepsilon (v)) = (-1)^{\varepsilon_1 v_1 + \varepsilon_2 v_2} \delta (v) ,
\]
again with $\varepsilon_1$ and $\varepsilon_2$ seen as integers (see for example the signs on Figure \ref{RealPart+}).
A face of $\Delta_C^*$ is called \emph{non-empty} if at least two of its vertices have opposite signs.
We can now construct the real part of $C$ with respect to $\delta$ (denoted $\R C_\delta$) as follows.
For every edge $e$ of $C$, let $e^\vee$ be its dual edge in $\Delta_C$.
The real part $\R e_\delta$ of $e$ is defined as 
\[ \R e_\delta := \bigcup\limits_{\varepsilon } \varepsilon (e) \subset (\T \pr_\Sigma )^* , \]
for $\varepsilon$ running through the symmetries of $\mathcal{R}^2$ such that $\varepsilon (e^\vee )$ is non-empty.

\begin{definition}
Let $C$ be a non-singular tropical curve in a non-singular projective tropical toric surface $\T \pr_\Sigma$.
The \emph{real part of} $C$ with respect to the distribution of signs $\delta$ is the set
\[ \R C_\delta := \overline{ \bigcup\limits_{e \in \Edge (C)} \R e_\delta } \subset \R \pr_\Sigma . \]
\end{definition}

\begin{theorem}[Viro's patchworking theorem \cite{viro2001dequantization}]
\label{ThViro2}
Let $(\mathcal{C}_t)_t$ be a family of non-singular real algebraic curves in the non-singular projective toric surface $\pr_\Sigma$ converging to a non-singular tropical curve $C$ (in the sense of \Cref{ThKapMik2}), such that $C$ has Newton polygon dual to the fan $\Sigma$.
Let $(\mathcal{P}_t)_t$ be a family of polynomials defining $(\mathcal{C}_t)_t$ with distribution of signs $\delta$.
For $t \in \R_{>0}$ small enough, there exists a homeomorphism $\pr_\Sigma (\R ) \rightarrow \R \pr_\Sigma$ mapping the set of real points $\mathcal{C}_t (\R )$ onto $\R C_\delta$.
\end{theorem}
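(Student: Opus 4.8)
The plan is to prove the homeomorphism of pairs by a local-to-global argument organised around the amoeba map $\Log_t$, building the homeomorphism piece by piece over the faces of the polyhedral decomposition of $\R^2$ dual to $\Delta_C$, and then compactifying. First I would restrict to the open torus: write $\mathcal{P}_t(z,w) = \sum_{(i,j)} \alpha_{i,j}(t) z^i w^j$ with $\alpha_{i,j}(t) \sim \beta_{i,j} t^{-a_{i,j}}$, and record the distribution of signs as $\delta(i,j) = \sign(\beta_{i,j})$ on $\Delta_C \cap \Z^2$. By \Cref{ThKapMik2} the amoebas $\Log_t(\mathcal{C}_t)$ converge to $C$, so for $t$ small the real locus $\mathcal{C}_t(\R) \cap (\R^\times)^2$ concentrates, inside the four quadrants of $(\R^\times)^2$ indexed by $\mathcal{R}^2 \equiv \Z_2^2$, over a shrinking neighbourhood of $C$ in each symmetric copy. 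The computation $|\alpha_{i,j}(t) z^i w^j| \sim |\beta_{i,j}| t^{-(a_{i,j}+ix+jy)}$ for $(x,y) = (\log_t|z|, \log_t|w|)$ shows that the monomials dominating $\mathcal{P}_t$ over a given cell are exactly those indexed by the dual face of $\Delta_C$, which is what localises the analysis.

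Then I would carry out the two local analyses that determine where the real curve actually appears. Over the interior of an edge $e$ of $C$, dual to $e^\vee = [(i_1,j_1),(i_2,j_2)]$, the polynomial $\mathcal{P}_t$ is dominated by the two monomials indexed by the endpoints of $e^\vee$; in the quadrant $\varepsilon$ the real sum $\pm|\alpha_{i_1,j_1}| z^{i_1} w^{j_1} \pm |\alpha_{i_2,j_2}| z^{i_2} w^{j_2}$ vanishes on a smooth arc precisely when the two extended signs $\delta(\varepsilon(i_1,j_1))$ and $\delta(\varepsilon(i_2,j_2))$ differ, that is, exactly when $\varepsilon(e^\vee)$ is non-empty. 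This is precisely the condition defining $\R e_\delta$. Over a vertex $v$ of $C$, dual to a unimodular triangle $T^\vee$ of $\Delta_C$ of area $1/2$, three monomials dominate; after the rescaling dictated by the tropical position of $v$, the curve converges to the real zero locus of a trinomial, which by non-singularity is a single smooth arc in each relevant quadrant joining up the incident edge arcs. I would package these as explicit local homeomorphisms from $\mathcal{C}_t(\R)$ to the combinatorial model $\R C_\delta$ near each face.

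Next comes the gluing. Using a partition of $\R^2$ into neighbourhoods of the vertices and of the edges of $C$, I would patch the local homeomorphisms into a single homeomorphism of $(\R^\times)^2$ onto its tropical model carrying $\mathcal{C}_t(\R) \cap (\R^\times)^2$ onto $\bigcup_{e} \R e_\delta$; the uniform asymptotics $\alpha_{i,j}(t) \sim \beta_{i,j} t^{-a_{i,j}}$ supply the estimates ensuring that, for all sufficiently small $t$, the transition regions contain no extra components and the real curve is isotopic to the piecewise-linear model. Finally I would compactify: by \Cref{DefCompTropToric} the unbounded edges of $C$ close up on the one-dimensional strata of $\T\pr_\Sigma$, and by \Cref{ThTropToric} the real toric surface $\pr_\Sigma(\R)$ is the quotient $\Delta^*/\!\sim$ whose identifications match the class $\varepsilon = (\bar{\alpha_1}, \bar{\alpha_2})$ on each boundary face; checking that the closure of $\mathcal{C}_t(\R)$ meets the boundary divisors of $\pr_\Sigma(\R)$ consistently with these identifications extends the homeomorphism over all of $\pr_\Sigma(\R)$ and identifies the image of $\mathcal{C}_t(\R)$ with $\R C_\delta$.

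The hard part will be the gluing step together with its uniformity in $t$: the local models over the vertices and edges are easy individually, but assembling them into a genuine global homeomorphism requires quantitative control of the curve in the transition regions between the vertex- and edge-neighbourhoods, so as to exclude spurious components and to guarantee that the isotopy is valid simultaneously for every small $t$. This is essentially where an implicit-function-theorem argument, applied uniformly over the compact base, is needed. The bookkeeping of the sign identifications on the toric boundary in the compactification step is the second, more technical but less conceptual, difficulty.
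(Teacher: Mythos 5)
The paper does not prove this statement: it is quoted verbatim as Viro's patchworking theorem with a citation to \cite{viro2001dequantization}, so there is no internal proof to compare against. Judged on its own terms, your outline follows the standard ``tropical localization'' route (dominant monomials over the cells dual to $\Delta_C$, binomial models over edges, trinomial models over vertices, gluing, toric compactification), and the two local analyses you describe are correct: over an edge the real binomial vanishes in the quadrant $\varepsilon$ exactly when $\delta(\varepsilon(v_1^e))\neq\delta(\varepsilon(v_2^e))$, which matches the definition of $\R e_\delta$, and over a vertex unimodularity reduces the trinomial to a line whose real arcs join the incident edge arcs.

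The genuine gap is that the entire content of the theorem is concentrated in the step you defer to ``an implicit-function-theorem argument, applied uniformly over the compact base.'' The dominant-monomial estimates you invoke are uniform only on compact subsets of the \emph{open} cells of the decomposition and degenerate as one approaches a lower-dimensional cell, precisely where the gluing happens; showing that for all sufficiently small $t$ the real curve stays a normal crossing-free graph of arcs over the transition regions, with no extra ovals and no tangencies, is the actual patchworking argument, and naming it is not the same as giving it. Likewise the compactification step needs more than bookkeeping of the $\Z_2^2$-identifications of \Cref{ThTropToric}: you must verify that $\mathcal{C}_t$ is non-singular and transverse along each boundary divisor of $\pr_\Sigma$ (this uses the restriction of $\Delta_C$ to the edges of $\Delta$ and the choice of compactification in \Cref{DefCompTropToric}) before the homeomorphism can be extended over $\pr_\Sigma(\R)\setminus(\R^\times)^2$. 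As written, the proposal is a correct plan for the known proof rather than a proof.
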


\begin{definition}
If $(\mathcal{C}_t)_t$ is a family of non-singular real algebraic curves satisfying \Cref{ThViro2} for a couple $(C,\delta)$, we say that for $t \in \R_{>0}$ small enough (in the sense of \Cref{ThViro2}), the curve $\mathcal{C}_t$ is near the tropical limit $(C,\delta)$.
\end{definition}

Given a distribution of signs $\delta$ on the dual subdivision $\Delta_C$ of a non-singular tropical curve $C$, \cite{renaudineau2017haas}, \cite{renaudineau2017tropical} and \cite{renaudineau2018bounding} define a real structure directly on $C$. 
We define this real structure as follows:

\begin{definition}
Let $C$ be a non-singular tropical curve.
For $\delta$ a distribution of signs on the dual subdivision $\Delta_C$, a \emph{real phase structure} on $C$ is a collection $\E := \{ \E_e \}_{e\in \Edge (C)} $ of $\Z_2$-affine spaces defined as
\[ \E_e = \{ \varepsilon \in \mathcal{R}^2 ~ | ~ \varepsilon (e) \subset \R e_\delta   \} .  \]
The couple $(C,\E )$ is called a \emph{real tropical curve}.
\end{definition}

\begin{figure}
\centering
\begin{subfigure}[t]{0.3\linewidth}
	\centering
	\includegraphics[width=\textwidth]{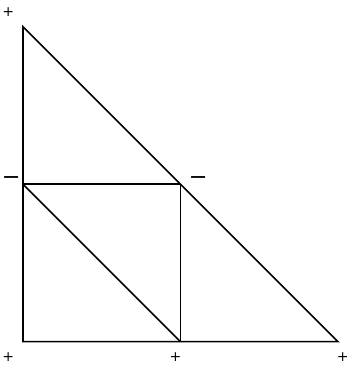}
	\caption{Distribution of signs $\delta$ on $\Delta_C$.}
	\label{SignDistrib+}
\end{subfigure}\hfill
\begin{subfigure}[t]{0.3\linewidth}
	\centering
	\includegraphics[width=\textwidth]{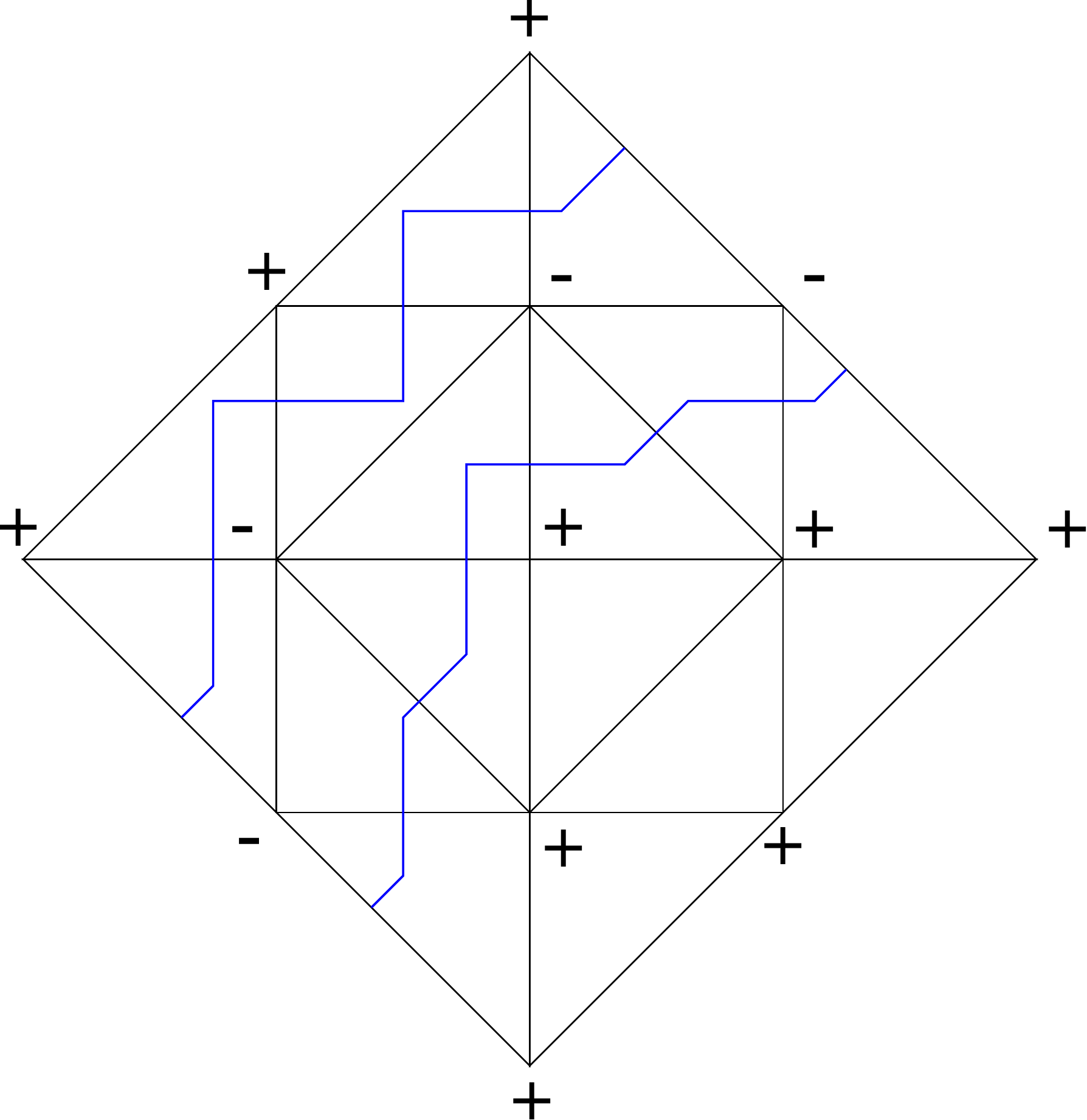}
	\caption{Real part $\R C_\delta$ on $\R \pr^2$.}
	\label{RealPart+}
\end{subfigure}\hfill
\begin{subfigure}[t]{0.3\linewidth}
	\centering
	\includegraphics[width=\textwidth]{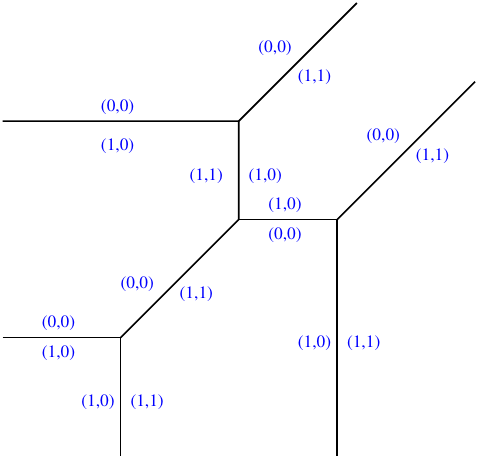}
	\caption{Real phase structure on $C$ (\Cref{ExCombPatch+}).}
	\label{RealPhase+}
\end{subfigure}
\caption{\Cref{ExCombPatch+}.}
\end{figure}

\begin{remark}
\label{RkRealPhase+}
We can define directly a real phase structure $\E$ on a non-singular tropical curve $C$ without defining first a distribution of signs on the dual subdivision, as described in \cite[Definition 3.1]{renaudineau2018bounding}.
Such a real phase structure $\E := \{ \E_e \}_e$ must satisfy the two following properties:
\begin{itemize}
\item for $e$ an edge of $C$ and $\overrightarrow{e} \in \Z_2^2$ its direction modulo 2, we have $\varepsilon + \varepsilon ' = \overrightarrow{e}$ for $\varepsilon , \varepsilon '$ the two elements of $\E_e$ (using the identification $\mathcal{R}^2 \equiv \Z_2^2$);
\item for $v$ a 3-valent vertex of $C$ incident to an edge $e$, for any element $\varepsilon \in \E_e$, there exists a unique edge $e' \neq e$ such that $v$ is incident to $e'$ and $\varepsilon \in \E_{e '}$, with $e$ and $e'$.
\end{itemize}
By \cite[Remark 3.8]{renaudineau2018bounding}, a real phase structure defined this way determines (up to multiplication by $(-1)$) a distribution of signs $\delta$ on the dual subdivision $\Delta_C$.  
\end{remark}

\begin{example}
\label{ExCombPatch+}
Let $C$ be a non-singular tropical curve of degree 2 in $\T \pr^2$ as pictured in Figure \ref{RealPhase+}.
The distribution of signs $\delta$ on the dual subdivision $\Delta_C$ is pictured in Figure \ref{SignDistrib+}.
Extending the distribution of signs to the symmetric copies of $\Delta_C$, we obtain the real part $\R C_\delta$ pictured in color in Figure \ref{RealPart+}.
We can recover the corresponding real phase structure $\E$ on $C$, pictured in color in Figure \ref{RealPhase+}, by looking for each edge $e$ of $C$ its non-empty symmetric copies $\varepsilon (e) , \varepsilon ' (e)$.
Notice that the real phase structure $\E$ can be constructed directly using \Cref{RkRealPhase+}, and we recover (up to multiplication by $(-1)$) the distribution of signs $\delta$ from the data of edges $e$ satisfying $(0,0) \in \E_e$.
\end{example}

\begin{notation}
From now on, we will write equivalently $\R C_\E$ or $\R C_\delta$ for the real part of a non-singular tropical curve $C$, as the distribution of signs $\delta$ and $-\delta$ determine the same real phase structure $\E$, and so the same real part of $C$ (we refer again to \cite[Remark 3.8]{renaudineau2018bounding} for more details).
Similarly, we will say that a non-singular real algebraic curve is \emph{near the tropical limit} $(C,\E )$ if $\delta$ determines the real phase structure $\E$.
\end{notation}

\subsection{Twisted edges on a tropical curve}

\begin{definition}
\label{DefTwist+}
Let $(C,\E )$ be a non-singular real tropical curve.
A bounded edge $e$ of $C$ is said to be \emph{twisted} if for any $\varepsilon \in \E_e$, the two edges $e_1 , e_2$ of $C$ adjacent to $e$ such that $\varepsilon \in \E_{e_1}$ and $\varepsilon \in \E_{e_2}$ lie on distinct sides of the affine line containing $e$ (see for instance \Cref{amoebaedge2+}).
Otherwise, the edge $e$ is said to be \emph{non-twisted}, and satisfies the fact that for any $\varepsilon \in \E_e$, the two edges $e_1 , e_2$ of $C$ adjacent to $e$ such that $\varepsilon \in \E_{e_1} , \varepsilon \in \E_{e_2}$ lie on the same side of the affine line containing $e$ (see for instance \Cref{amoebaedge1+}). 
\end{definition}  

\begin{figure}
\centering
\begin{subfigure}[t]{0.3\linewidth}
\centering
	\includegraphics[width=\textwidth]{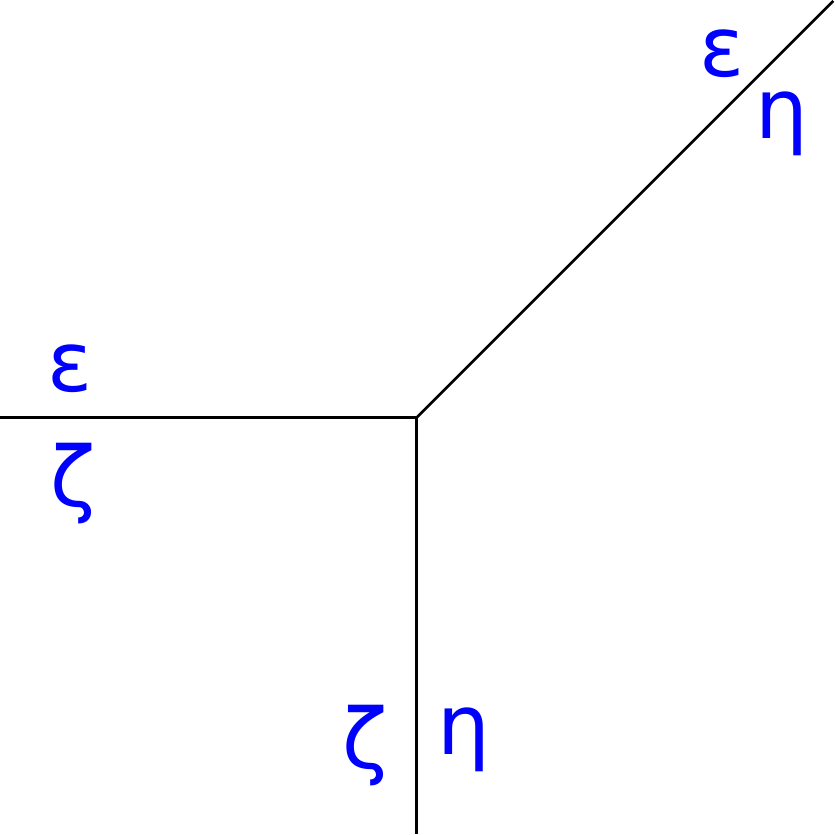}
	\caption{Real phase structure around a vertex.}
	\label{amoebavertex+}
\end{subfigure}\hfill
\begin{subfigure}[t]{0.3\linewidth}
	\centering
	\includegraphics[width=\textwidth]{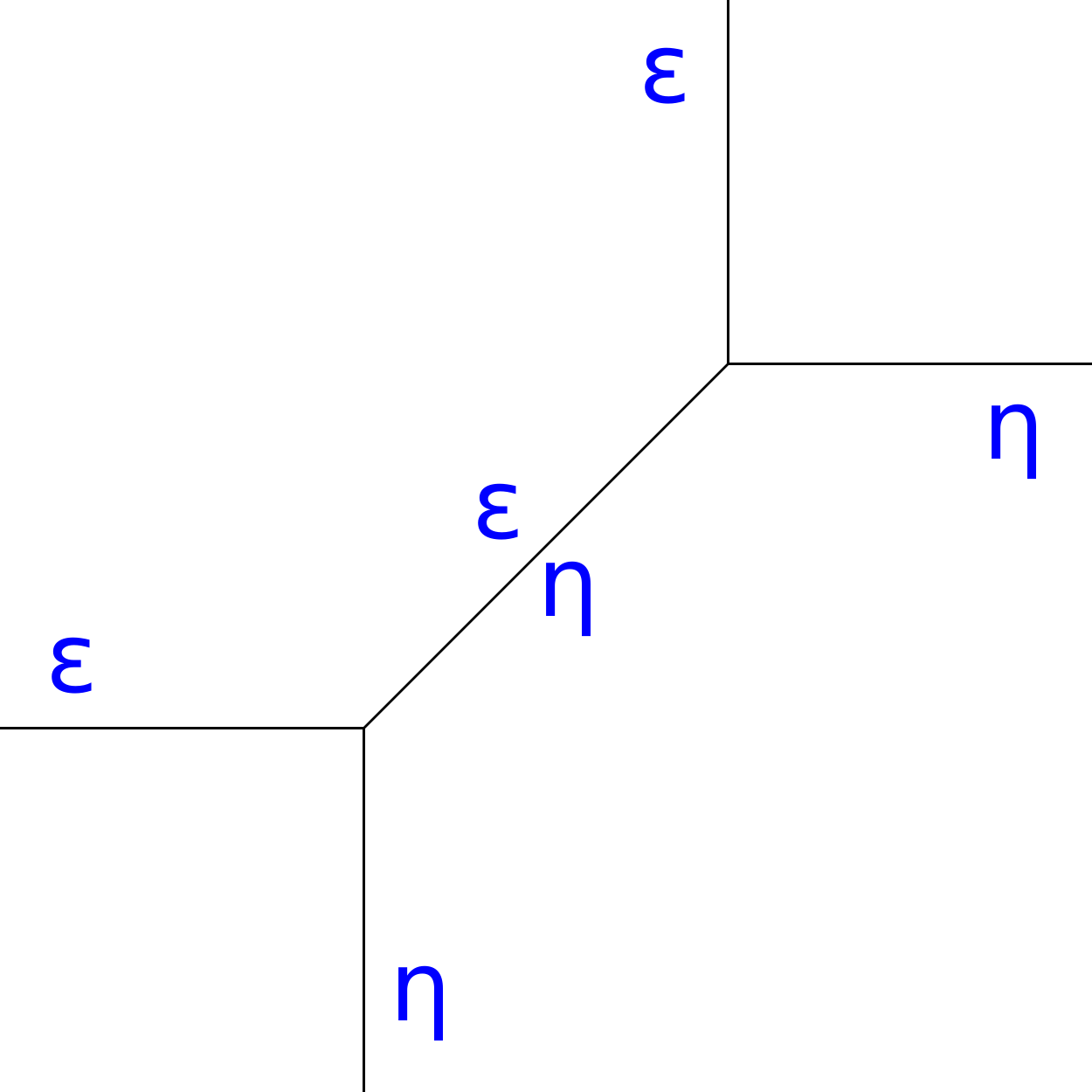}
	\caption{Real phase structure around a non-twisted bounded edge.}
	\label{amoebaedge1+}
\end{subfigure}\hfill
\begin{subfigure}[t]{0.3\linewidth}
	\centering
	\includegraphics[width=\textwidth]{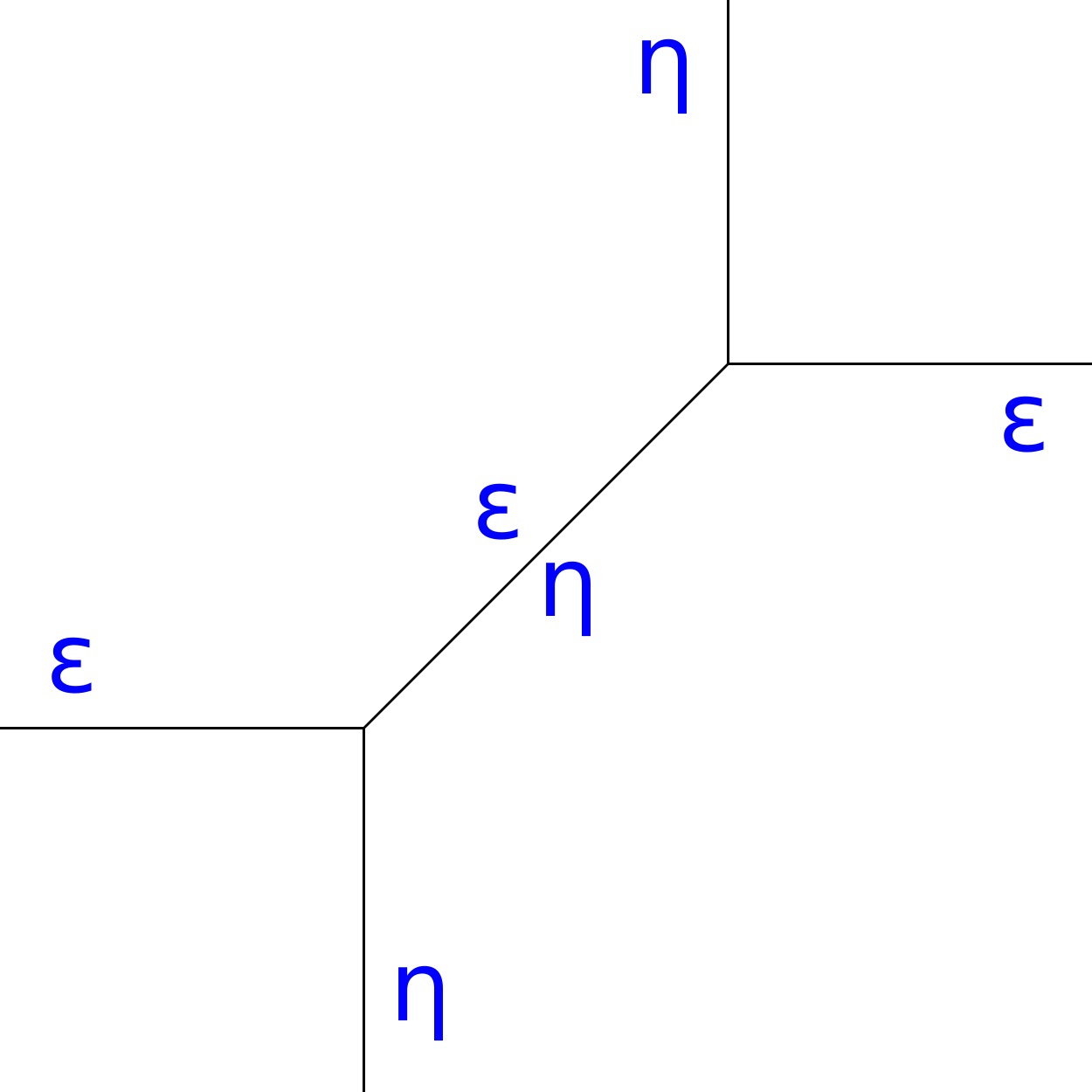}
	\caption{Real phase structure around a twisted bounded edge.}
	\label{amoebaedge2+}
\end{subfigure}
\caption{Properties of real phase structure from \Cref{RkRealPhase+} and \Cref{DefTwist+}.}
\end{figure}

We can define the twisted edges on a non-singular tropical curve in terms of \emph{amoebas}, see for instance \cite[Section 3.2]{brugalle2015brief} and \cite[Section 4.A]{renaudineau2017haas}. 
For an edge $e$ of $C$, we denote by $\overrightarrow{e} \in \Z^2_2$ the direction modulo 2 of $e$.
Not any subset $T$ of $\Edge^0 (C)$ may arise as the set of twisted edges. 
The possible configurations of twists must satisfy the following condition. 

\begin{definition}[\cite{brugalle2015brief},\cite{renaudineau2017haas}]
Let $C$ be a non-singular tropical curve.
Let $T$ be a subset of $\Edge^0 (C)$.
We say that $T$ is \emph{admissible}, or \emph{twist-admissible}, if for any cycle $\gamma$ of $C$, we have 
\begin{equation}
\label{EqAdm+}
\sum_{e \in \gamma \cap T} \overrightarrow{e} = \overrightarrow{0} \mod 2 .
\end{equation}
We denote by $\Adm (C)$ the set of all twist-admissible subsets of bounded edges $T \subset \Edge^0 (C)$.
\end{definition}

The set $\Edge^0 (C)$ of all bounded edges of $C$ can be given a $\Z_2$-vector space structure by identification with $\Z_2^{|\Edge^0 (C) |}$, so that a canonical basis is given by the vectors identified with single bounded edges of $C$.

\begin{definition}
Let $T = \{ e_1 , \ldots , e_r \}$ be a subset of bounded edges of $C$.
The set $T$ can be seen as the sum $\sum_{i=1}^r e_i \in \Edge^0 (C)$, with $\Edge^0 (C)$ equipped with its $\Z_2$-vector space structure.
We call the space-theoretic description of $T$ a \emph{configuration of twists} on $C$.
\end{definition}

The set $\Adm (C)$ has a $\Z_2$-vector space structure induced from $\Edge^0 (C)$ given by the $2g$ linear conditions from \Cref{EqAdm+}, for $g$ the number of cycles of $C$ bounding a connected component of the complement.


\subsection{Correspondence between signs and twisted edges}


We recall the correspondence between combinatorial patchworking and admissible sets of twisted edges.
Let $C$ be a non-singular tropical curve with Newton polytope $\Delta$. 
For each $e \in \Edge^0 (C)$, denote by $v_1^e$ and $v_2^e$ the two vertices of the edge $e^\vee$ dual to $e$ in the dual subdivision $\Delta_C$. 
The edge $e^\vee$ is contained in exactly two 2-dimensional simplices of $\Delta_C$.
Let $v_3^e$ and $v_4^e$ denote the vertices of these two simplices which are distinct from $v_1^e, v_2^e$.
Let $\delta$ be a distribution of signs on the integer points in $\Delta_C \cap \Z^2$.
The following proposition is described in \cite[Remark 3.9]{brugalle2015brief}

\begin{proposition}[\cite{brugalle2015brief}]
\label{PropTwistSign+}
Let $e \in \Edge^0 (C)$.
\begin{itemize}
\item If the coordinates modulo 2 of $v_3^e$ and $v_4^e$ are distinct, then the edge $e$ is twisted if and only if $\delta (v_1^e) \delta (v_2^e) \delta (v_3^e) \delta (v_4^e) = +1$. 
\item If the coordinates modulo 2 of $v_3^e$ and $v_4^e$ coincide, then the edge $e$ is twisted if and only if $\delta (v_3^e) \delta (v_4^e) = -1$. 
\end{itemize}    
\end{proposition}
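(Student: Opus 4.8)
The plan is to recast everything in the $\Z_2$-linear language of the real phase structure and reduce the twist condition of \Cref{DefTwist+} to a single parity equation. Write $\delta(v_i^e) = (-1)^{\sigma_i}$ with $\sigma_i \in \Z_2$, and let $\langle \cdot , \cdot \rangle$ denote the standard pairing on $\Z_2^2$. For any lattice segment $[v_i^e , v_j^e]$ occurring as the dual of an edge $f$ of $C$, the symmetric copy $\varepsilon (f^\vee)$ is non-empty exactly when its two endpoints carry opposite signs; using the extension rule $\delta (\varepsilon (v)) = (-1)^{\langle \varepsilon , v\rangle} \delta (v)$ this becomes the affine condition
\[ \varepsilon \in \E_f \iff \langle \varepsilon , v_i^e + v_j^e \rangle = \sigma_i + \sigma_j + 1 \pmod 2 . \]
First I would apply this to $e^\vee = [v_1^e , v_2^e]$ and to the four edges $[v_1^e , v_3^e]$, $[v_2^e , v_3^e]$, $[v_1^e , v_4^e]$, $[v_2^e , v_4^e]$ of the two triangles adjacent to $e^\vee$.

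Next, fix $\varepsilon \in \E_e$. At the endpoint $V$ of $e$ dual to the triangle $v_1^e v_2^e v_3^e$, a short parity computation shows that exactly one of $[v_1^e , v_3^e]$, $[v_2^e , v_3^e]$ satisfies the condition above — their two membership defects sum to $1 \bmod 2$, which re-proves the continuation axiom of \Cref{RkRealPhase+} — and this selects the adjacent edge $e_1$; similarly at the other endpoint $W$, dual to $v_1^e v_2^e v_4^e$, one obtains $e_2$. The geometric heart of the argument is to decide on which side of the line $\ell \supset e$ each candidate lies. Using that the outgoing primitive directions at a $3$-valent vertex are the $90^\circ$-rotations of the triangle's edge vectors, and that the two triangles lie on opposite sides of $[v_1^e , v_2^e]$ (so the unimodular determinants $\det (v_2^e - v_1^e , v_3^e - v_1^e)$ and $\det (v_2^e - v_1^e , v_4^e - v_1^e)$ are $\pm 1$ of opposite sign), I would compute each side as the sign of a $2 \times 2$ determinant and encode it as a class in $\Z_2$. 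This gives
\[ \mathrm{Side}(e_1) = \langle \varepsilon , v_1^e + v_3^e \rangle + \sigma_1 + \sigma_3 , \qquad \mathrm{Side}(e_2) = \langle \varepsilon , v_1^e + v_4^e \rangle + \sigma_1 + \sigma_4 \pmod 2 , \]
and, since $e$ is twisted iff $e_1 , e_2$ lie on distinct sides,
\[ e \text{ twisted} \iff \mathrm{Side}(e_1) + \mathrm{Side}(e_2) = \langle \varepsilon , v_3^e + v_4^e \rangle + \sigma_3 + \sigma_4 = 1 \pmod 2 , \]
the dependence on $v_1^e , v_2^e$ cancelling.

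Finally I would analyse $v_3^e + v_4^e \bmod 2$. Writing $u = v_2^e - v_1^e$ (primitive, as $C$ is non-singular) and completing it to a lattice basis, the opposite-sign determinants yield $\det (u , v_3^e - v_4^e) = \pm 2$, so $v_3^e - v_4^e$ has even coordinate along the complementary basis vector and hence $v_3^e + v_4^e \equiv v_3^e - v_4^e \in \{ 0 , u \} = \{ 0 , v_1^e + v_2^e \} \pmod 2$. This simultaneously shows the twist condition is independent of the choice of $\varepsilon \in \E_e$ and splits it into the two stated cases. If $v_3^e \not\equiv v_4^e$, then $v_3^e + v_4^e \equiv v_1^e + v_2^e$, so $\langle \varepsilon , v_3^e + v_4^e \rangle = \langle \varepsilon , v_1^e + v_2^e \rangle = \sigma_1 + \sigma_2 + 1$ because $\varepsilon \in \E_e$, and the criterion becomes $\sigma_1 + \sigma_2 + \sigma_3 + \sigma_4 = 0$, that is $\delta (v_1^e)\delta (v_2^e)\delta (v_3^e)\delta (v_4^e) = +1$; if $v_3^e \equiv v_4^e$, then $\langle \varepsilon , v_3^e + v_4^e \rangle = 0$ and the criterion becomes $\sigma_3 + \sigma_4 = 1$, that is $\delta (v_3^e)\delta (v_4^e) = -1$. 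I expect the main obstacle to be the geometric side computation: one must fix a single orientation reference for $\ell$ and keep the rotation convention consistent at both $V$ and $W$ (noting that $e$ points in opposite directions at its two endpoints, which is irrelevant to the side of $\ell$ but easy to mishandle), so that the two $\mathrm{Side}$ classes are genuinely comparable; everything else is routine $\Z_2$-bookkeeping, and a couple of explicit lozenges would serve as a sanity check against an off-by-one.
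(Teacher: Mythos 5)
Your argument is correct, and there is nothing in the paper to compare it against: \Cref{PropTwistSign+} is stated without proof and imported from \cite[Remark 3.9]{brugalle2015brief}, so your derivation stands as a self-contained justification. The $\Z_2$-bookkeeping checks out: the membership criterion $\varepsilon\in\E_f \iff \langle\varepsilon , v_i^e+v_j^e\rangle=\sigma_i+\sigma_j+1$ is exactly the non-emptiness condition for $\varepsilon (f^\vee )$ under the sign-extension rule, the two membership defects at each endpoint of $e$ do sum to $1$ (recovering the continuation axiom of \Cref{RkRealPhase+}), and the cancellation of $v_1^e , v_2^e$ in $\mathrm{Side}(e_1)+\mathrm{Side}(e_2)$ is right --- your two $\mathrm{Side}$ formulas each differ from the raw membership defect by the same additive constant $1$, which is harmless since only their sum matters. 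The geometric step you flag as the main obstacle is indeed the crux, but it can be made essentially convention-free by arguing with sectors instead of determinants: near the endpoint of $e$ dual to the triangle $v_1^ev_2^ev_3^e$, the sector of $\R^2\setminus C$ dual to $v_1^e$ is bounded by $e$ and by the edge dual to $[v_1^e,v_3^e]$, so that edge lies on the $v_1^e$-side of $\ell$ while the edge dual to $[v_2^e,v_3^e]$ lies on the $v_2^e$-side, and the $v_1^e$-side is the same at both endpoints of $e$ because the region dual to $v_1^e$ is connected along $e$; this yields exactly your identification of $\mathrm{Side}$ with the $\Z_2$-label of the selected vertex, with no orientation reference to keep track of. The final reduction, that the opposite-sign unimodular determinants force $v_3^e+v_4^e\in\{0,\,v_1^e+v_2^e\}$ modulo $2$, is also correct and is precisely what splits the statement into its two cases and shows the criterion is independent of the choice of $\varepsilon\in\E_e$.
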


In particular, for every twist-admissible set $T$ of bounded edges on $C$, there exists a distribution of signs $\delta$ on the dual subdivision $\Delta_C$ satisfying \Cref{PropTwistSign+}.
Therefore, by Viro's Patchworking Theorem (\Cref{ThViro2}), there exists a family of real algebraic curve $(\mathcal{C}_t)_{t>1}$ in $(\C^\times )^2$ which converges to $C$ in the sense of \Cref{ThKapMik2} and such that the corresponding set of twisted edges is $T$.
This fact is stated in \cite[Theorem 3.4]{brugalle2015brief} as a reformulation of Viro's Patchworking Theorem. 

From Proposition \ref{PropTwistSign+}, we can define a canonical distribution of signs on the dual subdivision $\Delta_C$ (which does not depend on the subdivision) such that no bounded edge of $C$ is twisted.

\begin{definition}[\cite{itenberg1995counter}]
\label{DefHarnackDistrib}
The \emph{Harnack distribution of signs} $\delta_\emptyset$ on the dual subdivision $\Delta_C$ of a non-singular tropical curve $C$ is given as: 
\begin{itemize}
\item for $v= (v_1 , v_2) \in \Delta_C \cap \Z^2$ such that both $v_1$ and $v_2$ are even, we assign $\delta_\emptyset (v) = -1$;
\item for $v= (v_1 , v_2) \in \Delta_C \cap \Z^2$ such that at least one $v_i$ is odd, we assign $\delta_\emptyset (v) = +1$.
\end{itemize}
We say that a distribution of signs $\delta$ on $\Delta_C$ is of \emph{Harnack type} if there exists $\varepsilon \in \mathcal{R}^2$ a symmetry such that for every $v\in \Delta_C \cap \Z^2$, we have $\delta (v) = \delta_\emptyset ( \varepsilon (v))$.
We say that a distribution of signs $\delta$ on $\Delta_C$ is of \emph{inverse Harnack type} if there exists $\varepsilon \in \mathcal{R}^2$ a symmetry such that for every $v\in \Delta_C \cap \Z^2$, we have $\delta (v) = - \delta_\emptyset ( \varepsilon (v))$.
\end{definition}

By Proposition \ref{PropTwistSign+}, every distribution of signs of Harnack type or inverse Harnack type on $\Delta_C$ induces an empty set of twisted edges on $C$. 
Moreover, any distribution of signs inducing an empty set of twisted edges is either of Harnack or inverse Harnack type.

\subsection{Real components and twisted cycles}

Given a non-singular real tropical curve $(C,\E )$ in a non-singular projective tropical toric surface $\T \pr_\Sigma$, we want to find a description of the connected components of $\R C_\E$ from the data of $C$ and the admissible set of twisted edges $T$ on $C$ induced by $\E$.
The following construction is inspired from the notion of \emph{left-right cycle} in graph theory, see \cite{godsil2013algebraic}.
Let $C$ be a non-singular tropical curve in a non-singular projective tropical toric surface $\T \pr_\Sigma$ and let $T$ be a subset of bounded edges on $C$.
Choose a starting vertex $v$ on $C$, as well as one of its incident edges $e$.
Let $S$ be an alternating sequence of vertices and edges of $C$ given by walking along the left side of $C$ from $(v,e)$ until reaching an edge $e' \in T$, then walk along the right side until reaching another edge $e'' \in T$, and so on, until reaching again the vertex $v$ of $C$.
Whenever the walk reaches a 1-valent vertex of $C$, turn around that vertex and continue the walk.

%
\begin{definition}
A \emph{closed twisted walk} starting from $(v,e)$ on a non-singular tropical curve $C$ equipped with a subset of bounded edges $T$ is the alternating sequence $S$ of vertices and edges encountered during the process above. 
A \emph{twisted cycle} on $(C,T)$ is an equivalence class of closed twisted walks under cyclic permutation and reversal.
\end{definition}

\begin{example}
\label{ExTwistCycle}
Let $C$ be a non-singular tropical curve of degree 2 in $\T \pr^2$, as represented in black in Figure \ref{FigTwistedCycle}.
Let $\E$ be a real phase structure on $C$ inducing the admissible set of twisted edges $T = \{ e_1 , e_2 \}$, for $e_1$ the unique bounded edge of direction $(1,1)$ and $e_2$ the unique bounded edge of direction $(0,1)$.
A closed twisted walk on $C$ starting at the couple $(v,e)$, with $v$ the uppermost 1-valent vertex and $e$ its unique incident edge, is represented in blue on Figure \ref{FigTwistedCycle}.
The unique twisted cycle on $(C,\E )$ is given as the equivalence class of this closed twisted walk.
\begin{figure}
\begin{center}
\includegraphics[width=0.45\textwidth]{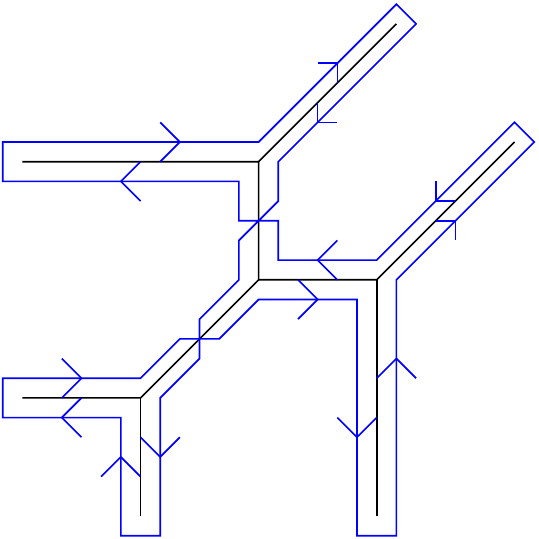}
\end{center}
\caption{Closed walk inducing a twisted cycle from Example \ref{ExTwistCycle}.}
\label{FigTwistedCycle}
\end{figure}
\end{example}

Given a twisted cycle $\rho$ on a non-singular real tropical curve $(C,\E )$ (hence a twisted cycle with respect to the admissible set of twisted edges $T$ induced by $\E$) inside a non-singular projective tropical toric surface $\T \pr_\Sigma$, we can construct its real part $\R \rho_\E$ as follows:
\begin{itemize}
\item For every two consecutive edges $e,e'$ in $\rho$ incident to a vertex $v \in \rho$ which is 3-valent in $C$, there exists a unique $\varepsilon \in \mathcal{R}^2$ such that $\E_e \cap \E_{e'} = \{ \varepsilon \}$. We then say that $(e,\varepsilon )$ and $(e',\varepsilon )$ belong to $(\rho ,\E )$. 
\item For every edge $e'' \in \rho$ incident to a vertex $v'' \in \rho$ which is 1-valent in $C$, by construction of twisted cycles $e''$ appears twice consecutively in $\rho$. Then for $\E_{e''} = \{ \varepsilon ' , \varepsilon '' \}$, we say that $(e'',\varepsilon ')$ and $(e'',\varepsilon '' )$ belong to $(\rho , \E)$. 
\end{itemize}


\begin{definition}
The \emph{real part} $\R \rho_\E$ of the twisted cycle $\rho$ is given as 
\[  \R \rho_\E := \overline{ \bigcup\limits_{(e,\varepsilon) \subset (\rho ,\E)} \varepsilon (e) } \subset \R \pr_\Sigma .  \] 
\end{definition}

\begin{proposition}
\label{PropBasisTwistedCycles}
Let $(C,\E)$ be a non-singular real tropical curve in a non-singular projective tropical toric surface $\T \pr_\Sigma$.
The real part $\R \rho_\E$ of a twisted cycle $\rho$ in $(C,\E )$ is a connected component of $\R C_\E$, and every connected component of $\R C_\E$ is given as the real part of a twisted cycle on $(C,\E)$.
In particular, the set of chains \[ \left\{ [ \R \rho_\E ] \subset C_1 (\R C_\E ; \Z_2 ) ~ | ~ \rho \text{ twisted cycle on } (C,\E ) \right\} \]
induces a basis of $H_1 (\R C_\E ; \Z_2 )$.
\end{proposition}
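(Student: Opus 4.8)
The plan is to realise $\R C_\E$ as a closed one-dimensional manifold and to match its connected components bijectively with twisted cycles; once this is done the homological statement is automatic, since for a disjoint union of circles the fundamental classes of the components form a basis of $H_1(-;\Z_2)$.

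First I would record the local structure of $\R C_\E$ and deduce that it is a disjoint union of finitely many circles. At a $3$-valent vertex $v$ with incident edges $a,b,c$, the vertex property of \Cref{RkRealPhase+} forces $\E_a,\E_b,\E_c$ to be the three two-element subsets of the three symmetries occurring at $v$; consequently each copy $\varepsilon(v)$ appearing in $\R C_\E$ is incident to exactly two edge-copies, so the three copies of $v$ smooth the vertex into three arcs joining the pairs of edges. At a $1$-valent vertex $v$ with edge $e$, the boundary identifications of \Cref{ThTropToric} glue $\varepsilon(v)$ to $(\varepsilon+\overrightarrow{e})(v)$ for $\varepsilon\in\E_e$, so the two copies of $e$ join end to end and every boundary point is again $2$-valent. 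Hence $\R C_\E$ is a compact $1$-manifold without boundary; alternatively this already follows from Viro's theorem (\Cref{ThViro2}), which identifies $\R C_\E$ with the real locus of a non-singular real algebraic curve, and which also gives that $H_1(\R C_\E;\Z_2)\cong\Z_2^{m}$ with $m$ the number of components.

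The heart of the argument is a continuation rule on edge-copies. From a pair $(e,\varepsilon)$ with $\varepsilon\in\E_e$, at a $3$-valent endpoint one continues to $(e',\varepsilon)$, where $e'$ is the unique other incident edge with $\varepsilon\in\E_{e'}$, and at a $1$-valent endpoint one continues to $(e,\varepsilon+\overrightarrow{e})$. By the previous paragraph the orbits of this rule are exactly the connected components of $\R C_\E$. I would then prove the key lemma: the edge-sequence of such an orbit is a closed twisted walk, and the symmetries carried by the orbit are exactly those assigned in the definition of $\R\rho_\E$. Indeed, across a $3$-valent vertex the shared symmetry of two consecutive edges $e,e'$ is the unique element of $\E_e\cap\E_{e'}$, matching the definition of $\R\rho_\E$; since $\varepsilon$ is constant along the traversal of $e$, the two $\varepsilon$-neighbours of $e$ lie on opposite sides of the affine line through $e$ precisely when $e$ is twisted, by \Cref{DefTwist+}. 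Translating "side of that line" into the left/right side of the planar walk, the orbit stays on one side between twisted edges, switches side exactly at each twisted edge, and turns around at each $1$-valent vertex — that is, it is precisely a closed twisted walk, and for the resulting twisted cycle $\rho$ one has $\R\rho_\E$ equal to the orbit, hence to a single connected component.

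From the lemma the bijection follows: every component is the orbit through any of its edge-copies, hence the real part of a twisted cycle, while distinct twisted cycles use disjoint sets of edge-copies and so give distinct components; this proves both that $\R\rho_\E$ is a connected component and that every component is of this form. The final assertion is then immediate, as the fundamental classes of the components of the closed $1$-manifold $\R C_\E$ form a basis of $H_1(\R C_\E;\Z_2)$. I expect the main obstacle to be exactly the translation step in the key lemma: identifying the local notion "side of the affine line containing $e$" from \Cref{DefTwist+} with the global left/right side of the planar walk, and checking that the constant-symmetry continuation through $3$-valent vertices realises the "walk along the left (resp. right) side" prescription between consecutive twisted edges. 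This requires a careful but routine local case analysis around vertices, together with bookkeeping of the symmetry flip and the turn-around at the $1$-valent vertices produced by the toric compactification.
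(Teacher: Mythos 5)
Your proof is correct and follows essentially the same route as the paper's: both identify the connected components of $\R C_\E$ with twisted cycles by traversing a component edge by edge and observing that the walk on $C$ switches sides exactly at twisted edges. The paper organises this via the absolute-value map $\R C_\E \rightarrow C$ and local convexity of the real piecewise-linear curve, whereas you organise it via a continuation rule on pairs $(e,\varepsilon)$ and \Cref{DefTwist+}; these are the same argument seen from opposite ends, and your preliminary observation that $\R C_\E$ is a closed $1$-manifold makes the final basis claim slightly more explicit than in the paper.
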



%

\begin{proof}
Let $\rho$ be a twisted cycle in $(C,\E)$.
By definition of a twisted cycle, the underlying graph of $\rho$ is connected, hence by construction its real part $\R \rho_\E$ is a connected graph.
Again by construction, every vertex of $\R \rho_\E$ is in the boundary of exactly two edges, hence for $[\R \rho_\E]$ the 1-dimensional chain in $C_1 (\R C_\E ; \Z_2)$ induced by the real part $\R \rho_\E$, the boundary chain $\partial [\R \rho_\E]$ is trivial in $C_0 (\R C_\E ; \Z_2)$.
Then the chain $[\R \rho_\E ]$ induces a class (also called $[\R \rho_\E]$) of the homology group $H_1 (\R C_\E , \Z_2 )$.
Since $\R C_\E$ is 1-dimensional, the class $[\R \rho_\E]$ is non-trivial in $H_1 (\R C_\E , \Z_2 )$.
Then since $\R \rho_\E$ is connected, it is a connected component of the real part $\R C_\E$.

Now let $c$ be a connected component of $\R C_\E$.
Let $\varphi : \R C_\E \rightarrow C$ be the absolute value map. 
The map $\varphi$ sends any two consecutive edges of $c$ to two (not necessarily distinct) adjacent edges of $C$.
Therefore the map $\varphi$ sends $c$ to a cycle $\rho$ on $C$ (in the sense of equivalence class of a closed walk under cyclic permutation and reversal).
If $c$ is inflected locally around an edge $e$, then a closed walk inducing the cycle $\rho$ must change side at the edge $\varphi (e)$ in $C$, since $C$ induces a convex polyhedral subdivision of $\T \pr_\Sigma$.
Similarly, if $c$ is locally convex around an edge $e$, then a closed walk inducing the cycle $\rho$ goes along the edge $\varphi (e)$ in $C$ without changing side.
Therefore $\rho$ is a twisted cycle on $(C,\E )$. 
\end{proof}

\section{Prescribed number of real components}
\label{Sec4}

\subsection{Exact sequence for real tropical curves}

Let $C$ be a non-singular tropical curve in a non-singular projective tropical toric surface $\T \pr_\Sigma$.
The \emph{tangent cosheaf} $\cF_1$ on $C$ is defined as follows.
For $e$ an edge of $C$, we set $\cF_1 (e) := \Z_2 \langle \overrightarrow{e} \rangle$, with $\overrightarrow{e}$ the direction modulo 2 of $e$.
For $v$ a 3-valent vertex of $C$, we set $\cF_1 (v) := \Z_2^2$, and for $v'$ a 1-valent vertex of $C$, we set $\cF_1 (v') := 0$. 
We refer to \cite{renaudineau2018bounding} for more details, and to \cite{itenberg2019tropical} for the initial notion of $p$-multitangent cosheaves $\cF_p$ with $\Q$-coefficients.
The \emph{cellular chain complex} $C_\bullet (C;\cF_1 )$ is given by 
\begin{align*}
\bigoplus\limits_{e \in \Edge (C)} \cF_1 (e) =: C_1 (C ; \cF_1 )  & \rightarrow C_0 (C ; \cF_1 ) := \bigoplus\limits_{v \in \Vertex (C)} \cF_1 (v) \\
e \otimes \overrightarrow{e} & \mapsto (\partial e) \otimes \overrightarrow{e}. 
\end{align*}
Then the homology groups $H_q (C; \cF_1 )$ are given as $H_q (C_\bullet (C; \cF_1 ))$.
The homology group $H_1(C;\cF_1)$ is generated by the class $[C]$ induced by the chain \[ \sum_{e\in \Edge (C)} e \otimes \overrightarrow{e} \subset C_1 (C ; \cF_1 ) . \]
The group $H_0 (C; \cF_1 )$ has dimension $g$, for $g:= \dim H_1 (C ; \Z_2 )$ the number of \emph{primitive cycles} on the non-singular tropical curve $C$.

\begin{definition}
Let $C$ be a non-singular tropical curve in a non-singular projective tropical toric surface $\T \pr_\Sigma$.
A cycle $\gamma$ of the non-singular tropical curve $C$ is called \emph{primitive} if it bounds a connected component of $\T \pr_\Sigma \backslash C$.
\end{definition}

Let $\{ \gamma_1 , \ldots ,\gamma_g \}$ be the set of primitive cycles on $C$.
The set of classes \[ \{ [\gamma_i^*] ; i=1,\ldots , g \} \] form a basis of $H_0 (C; \cF_1 )$, for $[\gamma_i^*]$ induced by the chain
\[ \sum\limits_{e \in \Edge (\gamma_i) } v_e \otimes \overrightarrow{e} \subset C_0 (C;\cF_1 ) ,  \]
with $v_e$ the first vertex of the edge $e$ after choosing a cyclic order on $\gamma_i$.

Using these notions of tropical homology and several results from \cite{renaudineau2018bounding}, we will be able to determine the number of connected components of the real part of a non-singular real tropical curve.

\begin{definition}
Let $(C,\E)$ be a non-singular real tropical curve, and let $g$ be the number of primitive cycles on $C$.
We say that $(C,\E)$ is \emph{maximal} if its real part $\R C_\E$ consists of $g+1$ connected components.
More generally, we say that $(C,\E)$ is a \emph{$(M-r)$} real tropical curve if its real part $\R C_\E$ consists of $g+1-r$ connected components.
\end{definition}

\begin{theorem}[\cite{renaudineau2018bounding}]
\label{ThExactSeq}
Let $(C,\E)$ be a non-singular real tropical curve in a non-singular projective tropical toric surface $\T \pr_\Sigma$. 
We have the following long exact sequence:
\[ \begin{tikzcd}
0 \arrow{r} & H_1(C;\cF_1) \arrow{r} & H_1(\R C_\E ; \Z_2 ) \arrow{r} & H_1(C;\Z_2) \arrow{lld}{\partial} &  \\
 & H_0(C;\cF_1) \arrow{r} & H_0(\R C_\E ; \Z_2 ) \arrow{r} & H_0(C;\Z_2 ) \arrow{r} & 0 .
\end{tikzcd} \]
In particular, the number of connected components of $\R C_\E$ is equal to $\dim \ker \partial + 1$, or said otherwise, the non-singular real tropical curve $(C,\E)$ is a $(M - r)$-real tropical curve for $r:= \rank \partial$. 
\end{theorem}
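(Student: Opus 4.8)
The plan is to exhibit the stated long exact sequence as the one associated to a short exact sequence of cellular chain complexes, built cell by cell from the absolute value map $\varphi : \R C_\E \to C$ already used in the proof of \Cref{PropBasisTwistedCycles}. First I would analyse the fibers of $\varphi$. Over an edge $e$ the real part consists of the copies $\varepsilon(e)$ for $\varepsilon \in \E_e$, so $\varphi^{-1}(e)$ has $|\E_e| = 2$ cells. Over a $3$-valent vertex $v$ with incident edges $e_1,e_2,e_3$, the real phase structure condition of \Cref{RkRealPhase+} forces each symmetry occurring in $\E_{e_1}\cup\E_{e_2}\cup\E_{e_3}$ to occur in exactly two of the $\E_{e_i}$; a short count of the six incidences then shows $\varphi^{-1}(v)$ consists of exactly three points. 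Over a $1$-valent vertex $v'$ the two copies of the incident edge close up under the boundary identifications of \Cref{ThTropToric}, so $\varphi^{-1}(v')$ is a single point. For each cell $\sigma$ I would then consider the augmentation sequence of the $\Z_2$-cellular chains supported over $\sigma$,
\[ 0 \to \cF_1(\sigma) \to \Z_2[\varphi^{-1}(\sigma)] \xrightarrow{\mathrm{aug}} \Z_2 \to 0, \]
in which the augmentation ideal is canonically $\cF_1(\sigma)$: for an edge it is spanned by $[\varepsilon'(e)] + [\varepsilon''(e)] \leftrightarrow \varepsilon'+\varepsilon'' = \overrightarrow{e}$, generating $\cF_1(e) = \Z_2\langle\overrightarrow{e}\rangle$; for a $3$-valent vertex the differences $[\varepsilon_i(v)] + [\varepsilon_j(v)] \mapsto \varepsilon_i+\varepsilon_j = \overrightarrow{e_k}$ span $\cF_1(v) = \Z_2^2$ (two of the $\overrightarrow{e_k}$ form a basis by non-singularity); for a $1$-valent vertex both outer terms vanish, matching $\cF_1(v') = 0$.

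Next I would assemble these fiberwise sequences over all cells into
\[ 0 \to C_\bullet(C;\cF_1) \to C_\bullet(\R C_\E;\Z_2) \to C_\bullet(C;\Z_2) \to 0, \]
the right-hand surjection being the pushforward $\varphi_*$ and the left-hand injection sending $e\otimes\overrightarrow{e}$ to $[\varepsilon'(e)]+[\varepsilon''(e)]$, where $\E_e = \{\varepsilon',\varepsilon''\}$. The one genuine verification is that both maps are chain maps. Compatibility of $\varphi_*$ with boundaries is automatic since $\varphi$ is cellular. For the injection, computing the cellular boundary gives $\partial\big([\varepsilon'(e)]+[\varepsilon''(e)]\big) = \sum_{w\in\partial e}\big([\varepsilon'(w)]+[\varepsilon''(w)]\big)$, and at each $3$-valent endpoint $w$ the summand is precisely the image of $w\otimes\overrightarrow{e}$, while at a $1$-valent endpoint the two copies are identified and the summand vanishes, consistent with $\cF_1(v')=0$; this reproduces the tropical differential $e\otimes\overrightarrow{e}\mapsto(\partial e)\otimes\overrightarrow{e}$. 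I expect this boundary check, and keeping the vertex identifications coherent across the $3$-valent and $1$-valent cases, to be the main technical point.

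Finally, taking the long exact homology sequence of this short exact sequence and using that $C$ and $\R C_\E$ are $1$-dimensional (so $H_q$ vanishes for $q\geq 2$, which in particular forces $H_1(C;\cF_1)$ to inject) yields exactly the displayed six-term sequence, with connecting map $\partial : H_1(C;\Z_2)\to H_0(C;\cF_1)$. For the component count I would read off the tail
\[ H_1(C;\Z_2) \xrightarrow{\partial} H_0(C;\cF_1) \to H_0(\R C_\E;\Z_2) \to H_0(C;\Z_2) \to 0. \]
Since $C$ is connected, $\dim H_0(C;\Z_2)=1$ and $\dim H_1(C;\Z_2)=g$, so exactness gives $\dim H_0(\R C_\E;\Z_2) = \dim\mathrm{coker}\,\partial + 1 = g - \rank\partial + 1 = \dim\ker\partial + 1$. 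As $\dim H_0(\R C_\E;\Z_2)$ counts the connected components of $\R C_\E$, this establishes the stated count and shows that $(C,\E)$ is $(M-r)$ for $r=\rank\partial$.
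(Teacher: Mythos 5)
Your proposal is correct, but it takes a genuinely different route from the paper: the paper gives no argument at all here, simply invoking \cite[Proposition 4.10]{renaudineau2018bounding} for $p=0,1$, whereas you reconstruct a complete, self-contained proof by exhibiting the six-term sequence as the long exact sequence of an explicit short exact sequence of cellular chain complexes built from the absolute value map $\varphi$. Your fiber counts are right ($|\E_e|=2$ over an edge; the incidence count from \Cref{RkRealPhase+} giving exactly three points over a $3$-valent vertex; a single point over a $1$-valent vertex, since the stratum identification of \Cref{ThTropToric} is precisely by $\overrightarrow{e}$ there), and the identification of the augmentation ideal with $\cF_1(\sigma)$ works because at a non-singular vertex any two of the three edge directions form a $\Z_2$-basis (the dual triangle has area $1/2$), while the formal disjoint union defining $A^*$ guarantees the symmetric copies over a vertex really are distinct cells. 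The final dimension count from the tail of the sequence, using $\dim H_0(C;\cF_1)=g=\dim H_1(C;\Z_2)$ and connectedness of $C$, is exactly right. What the comparison buys: the paper's citation inherits the statement from Renaudineau--Shaw's general filtration of the sign cosheaf (whose graded pieces are the $\cF_p$ and whose spectral sequence collapses to this sequence when $n=1$), so it is shorter and embeds the result in the general theory; your version is essentially the curve case of that same mechanism worked out by hand, which costs a page of cell-by-cell verification but makes the connecting homomorphism and the role of the real phase structure completely explicit --- a useful complement, since \Cref{ThConnectHom+} is later used to compute precisely that connecting map.
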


\begin{proof}
We apply \cite[Proposition 4.10]{renaudineau2018bounding} for $p=0$ and for $p=1$ to the case of non-singular real tropical curves.
\end{proof}

For $C$ a non-singular tropical curve with primitive cycles $\{ \gamma_1 ,\ldots , \gamma_g \}$ and $T$ a subset of bounded edges of $C$, we introduce a $g\times g$ symmetric matrix with $\Z_2$-coefficients keeping track of the (mod 2)-intersection numbers $| \gamma_i \cap \gamma_j \cap T |\mod 2$ for $i,j = 1,\ldots , g$.

\begin{definition}
\label{DefMatrixTwist}
Let $C$ be a non-singular tropical curve, let $\{ \gamma_1, \ldots , \gamma_g \}$ be the primitive cycles on $C$, and let $T$ be a subset of bounded edges on $C$.
Seeing the primitive cycles as subsets of bounded edges of $C$, we define the $g\times g$ symmetric matrix $A_T$ with $\Z_2$-coefficients as
\[ A_T := \left( |\gamma_i \cap \gamma_j \cap T | \mod 2 \right)_{i,j = 1,\ldots ,g} . \]
\end{definition}

For $(C,\E)$ a non-singular real tropical curve in a non-singular projective tropical toric surface $\T \pr_\Sigma$, we want to be able to compute the connecting homomorphism $\partial$ of \Cref{ThExactSeq} in terms of the admissible set of twisted edges $T$ induced by $\E$.
We have a non-degenerate pairing:
\[ \langle \cdot , \cdot \rangle : H_0 (C; \cF_1 ) \times H_1 (C; \Z_2 ) \rightarrow \Z_2 \]
induced from the pairing on integral homology groups for non-singular tropical curves in \cite{shaw2011tropical} (a similar non-degenerate pairing defined between tropical homology and cohomology groups is also defined in \cite{brugalle2015brief} (Section 7.8) and \cite{mikhalkin2014tropical} (Section 3.2)).
Renaudineau and Shaw \cite{renaudineau2018bounding} used this pairing in order to recover the whole map $\partial$ as follows.

\begin{theorem}[\cite{renaudineau2018bounding}]
\label{ThConnectHom+}
Let $(C,\E )$ be a non-singular real tropical curve with admissible set of twisted edges $T$.
Let $\{ \gamma_1, \ldots , \gamma_g \}$ be the primitive cycles on $C$, seen as subsets of bounded edges of $C$.
Then 
\[ \langle \partial [\gamma_i] , [\gamma_j] \rangle = | \gamma_i \cap \gamma_j \cap T | \mod 2 .   \] 
In particular, we have $\dim \ker A_T = \dim \ker \partial$, for $\partial$ the connecting homomorphism in \Cref{ThExactSeq}. 
\end{theorem}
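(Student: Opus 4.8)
The plan is to make the connecting homomorphism $\partial$ of \Cref{ThExactSeq} explicit and then read off its matrix in the bases $\{[\gamma_i]\}$ of $H_1(C;\Z_2)$ and $\{[\gamma_j^*]\}$ of $H_0(C;\cF_1)$. Following Renaudineau and Shaw, the long exact sequence is the one associated, via the snake lemma, to a short exact sequence of cellular chain complexes
\[ 0 \to C_\bullet(C;\cF_1) \to C_\bullet(\R C_\E;\Z_2) \xrightarrow{\varphi_*} C_\bullet(C;\Z_2) \to 0, \]
where $\varphi \colon \R C_\E \to C$ is the absolute value map of \Cref{PropBasisTwistedCycles}, the right-hand map is the pushforward of chains, and the left-hand map sends a generator $e\otimes\overrightarrow{e}$ to the sum $\sum_{\varepsilon\in\E_e}\varepsilon(e)$ of the two symmetric copies of $e$ occurring in $\R C_\E$. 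First I would record the resulting formula for $\partial$ on a generator $[\gamma_i]$: choose, for each edge $e$ of the primitive cycle $\gamma_i$, a single symmetric copy $\varepsilon_e(e)\subset\R C_\E$ with $\varepsilon_e\in\E_e$, so that $\tilde\gamma_i:=\sum_{e\in\gamma_i}\varepsilon_e(e)$ is a $\varphi_*$-lift of $\gamma_i$. Since $\varphi_*\partial\tilde\gamma_i=\partial\gamma_i=0$, the boundary $\partial\tilde\gamma_i$ lies in $\ker\varphi_*$, and its unique preimage in $C_0(C;\cF_1)$ represents $\partial[\gamma_i]$.

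The heart of the argument is a local analysis at the vertices of $\gamma_i$ showing that the only contributions to $\partial\tilde\gamma_i$ come from twisted edges. At a $3$-valent vertex $v$ where two consecutive edges $e,e'$ of $\gamma_i$ meet, the three real copies of $v$ are indexed by the pairwise intersections of the affine lines $\E_{e},\E_{e'}$ and $\E_{e''}$ in $\mathcal{R}^2\equiv\Z_2^2$; the two edge-copies $\varepsilon_e(e)$ and $\varepsilon_{e'}(e')$ abut the same real vertex, so that their boundary contributions cancel modulo $2$, exactly when $\varepsilon_e=\varepsilon_{e'}=\E_e\cap\E_{e'}$. I would then prove the key lemma that a coherent choice of sheet $\varepsilon_e$ can be propagated across a non-twisted edge, whereas a twisted edge forces the two endpoint conditions $\varepsilon_e=\E_e\cap\E_{e_{\mathrm{prev}}}$ and $\varepsilon_e=\E_e\cap\E_{e_{\mathrm{next}}}$ to disagree; this is precisely the reformulation of \Cref{DefTwist+} in terms of the two adjacent edges lying on the same side of, or on opposite sides of, the line through $e$. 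The outcome is that, after an optimal choice of lift, $\partial\tilde\gamma_i$ is supported exactly at the endpoints of the edges of $\gamma_i\cap T$, so that
\[ \partial[\gamma_i]=\Big[\sum_{e\in\gamma_i\cap T} v_e\otimes\overrightarrow{e}\Big]\in H_0(C;\cF_1). \]

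It then remains to evaluate the pairing. Using the description of the non-degenerate pairing $\langle\cdot,\cdot\rangle\colon H_0(C;\cF_1)\times H_1(C;\Z_2)\to\Z_2$ from \cite{shaw2011tropical}, the class $v_e\otimes\overrightarrow{e}$ pairs to $1$ with $[\gamma_j]$ precisely when $e$ is an edge of $\gamma_j$; summing over $e\in\gamma_i\cap T$ yields $\langle\partial[\gamma_i],[\gamma_j]\rangle=|\gamma_i\cap\gamma_j\cap T|\bmod 2$, which is the $(i,j)$-entry of $A_T$. Finally, since the pairing is non-degenerate it identifies $H_0(C;\cF_1)$ with the dual of $H_1(C;\Z_2)$, and under this identification $\partial$ is represented by the matrix $A_T$; hence $\rank\partial=\rank A_T$ and $\dim\ker\partial=\dim\ker A_T$. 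I expect the main obstacle to be the local lemma matching the combinatorial twist condition of \Cref{DefTwist+} with the sheet-switch obstruction, together with the careful bookkeeping of the real copies of both $3$-valent and $1$-valent vertices (where admissibility of $T$ guarantees that the switches close up around each cycle) needed to ensure that the snake-lemma formula for $\partial$ is exactly the claimed $\gamma_i\cap T$ chain.
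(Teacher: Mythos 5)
The paper does not actually prove \Cref{ThConnectHom+}: the statement is imported from \cite{renaudineau2018bounding}, and the paper's ``proof'' is the one-line remark that it is a reformulation of Theorem~7.2 there via the non-degenerate pairing. Your proposal therefore does not so much diverge from the paper as reconstruct the argument of the cited reference, and it does so essentially correctly: the short exact sequence of cellular chain complexes, the snake-lemma description of $\partial$, and the identification of twisted edges with the obstruction to coherently lifting a primitive cycle to $\R C_\E$ are precisely the mechanism behind the result of Renaudineau and Shaw. Two points in your sketch deserve to be nailed down. First, the equivalence ``$e$ is twisted if and only if the two matching conditions at the endpoints of $e$ disagree'' uses that the two neighbours $e_{\mathrm{prev}}, e_{\mathrm{next}}$ of $e$ in $\gamma_i$ lie on the \emph{same} side of the line through $e$; this holds exactly because $\gamma_i$ is primitive (at each endpoint of $e$ the two other edges lie on opposite sides of that line, and the one belonging to $\gamma_i$ is the one on the side of the face that $\gamma_i$ bounds), and this is the only place primitivity enters, the general case following by linearity of $\partial$ together with the observation that symmetric differences of edge sets behave correctly modulo $2$. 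Second, the pairing formula $\langle [v_e\otimes\overrightarrow{e}],[\gamma_j]\rangle = 1$ if and only if $e\in\gamma_j$ is asserted by appeal to \cite{shaw2011tropical} rather than checked; you should verify that the assignment $v\otimes\overrightarrow{e}\mapsto |\{e\}\cap\gamma_j| \bmod 2$ descends to a linear functional on $H_0(C;\cF_1)$ (it does: a $\Z_2$-cycle uses an even number of the three edges at each trivalent vertex and contains no unbounded edge, which kills the relations $\sum_i \overrightarrow{e_i}=0$ at vertices and the boundaries $\partial(e\otimes\overrightarrow{e})$) and that the resulting pairing is non-degenerate (it is, since the classes $[v\otimes\overrightarrow{e}]$ span $H_0(C;\cF_1)$ and a cycle pairing trivially with all of them must have empty edge support). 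With these checks in place your computation $\partial[\gamma_i]=\sum_{e\in\gamma_i\cap T}[v_e\otimes\overrightarrow{e}]$ yields the displayed identity, and since the ``in particular'' clause only requires \emph{some} non-degenerate pairing realizing it, your argument is self-contained and does not require matching your formula letter-for-letter against the pairing of \cite{shaw2011tropical}.
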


The theorem above is a reformulation of \cite[Theorem 7.2]{renaudineau2018bounding} using the non-degenerate pairing formulation used in the proof of \cite[Theorem 7.5]{renaudineau2018bounding}.
From now on, the connecting homomorphism from Theorem \ref{ThConnectHom+} will be denoted $\partial_T$, as it is determined by the admissible set of twisted edges $T$ on $(C,\E )$. 

\subsection{Haas conditions for dividing and maximal curves}
 
\begin{definition}
We say that a non-singular real tropical curve $(C,\E)$ is \emph{dividing} if $(C,\E)$ is the tropical limit of a family of non-singular real algebraic curves $(\mathcal{C}_t)_t$ such that for $t \in \R_{>0}$ small enough, the curve $\mathcal{C}_t$ is dividing.
\end{definition}

Haas gave a necessary and sufficient condition to construct dividing curves via combinatorial patchworking, which translates in terms of real tropical curves as follows:

\begin{theorem}[\cite{haas1997real}]
\label{ThDividing+}
Let $(C,\E )$ be a non-singular real tropical curve with admissible set of twisted edges $T$.
Then $(C,\E )$ is dividing if and only if for any cycle $\gamma$ in $C$ (seen as a subset of bounded edges of $C$), we have 
\begin{equation}
\label{EqDiv+}
| \gamma \cap T | = 0 \mod 2,  
\end{equation}
\end{theorem}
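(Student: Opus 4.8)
The plan is to reduce the stated condition to a statement about the primitive cycles, and then to match it with the classical topological criterion for a smooth real curve to be dividing. For the reduction, note that $\gamma \mapsto |\gamma \cap T| \bmod 2$ is the restriction to the cycle space $H_1(C;\Z_2) \subset \Z_2^{|\Edge^0(C)|}$ of the $\Z_2$-linear functional given by the dot product with $T$; in particular it is additive, since $|(\gamma+\gamma')\cap T| \equiv |\gamma\cap T| + |\gamma'\cap T| \pmod 2$ for any two cycles. As the primitive cycles $\gamma_1,\dots,\gamma_g$ form a basis of $H_1(C;\Z_2)$, the condition \eqref{EqDiv+} for all cycles $\gamma$ is equivalent to $|\gamma_i \cap T| \equiv 0 \pmod 2$ for every primitive cycle, which by \Cref{DefMatrixTwist} is exactly the vanishing of the diagonal of $A_T$.

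It remains to show that $(C,\E)$ is dividing if and only if this diagonal vanishes. I would use the standard criterion that a smooth real algebraic curve $\mathcal{C}$ is dividing if and only if $[\mathcal{C}(\R)] = 0$ in $H_1(\mathcal{C}(\C);\Z_2)$. Fix $\mathcal{C} = \mathcal{C}_t$ near the tropical limit $(C,\E)$, so that $\mathcal{C}_t(\R) \cong \R C_\E$ and $\mathcal{C}_t(\C)$ is the nearby smooth complex curve of genus $g$, with $H_1(\mathcal{C}_t(\C);\Z_2) \cong \Z_2^{2g}$ carrying a non-degenerate intersection form. Using the patchwork and amoeba model of $\mathcal{C}_t(\C)$ over $C$ (\Cref{ThKapMik2}, \Cref{ThViro2}) and the known relation $x \cdot \sigma_* x \equiv x\cdot[\mathcal{C}_t(\R)] \pmod 2$ for the conjugation $\sigma$, I would compute that for the complex cycle $\alpha_i$ lying over a primitive cycle $\gamma_i$ the intersection number $[\mathcal{C}_t(\R)] \cdot \alpha_i$ equals $|\gamma_i \cap T| \bmod 2$, while $[\mathcal{C}_t(\R)]$ pairs trivially with the complementary family of fibre cycles. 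Non-degeneracy of the intersection form then gives $[\mathcal{C}_t(\R)] = 0$ if and only if all the numbers $|\gamma_i\cap T|$ vanish, i.e.\ exactly the condition of the first paragraph.

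The main obstacle is the intersection computation: identifying $[\mathcal{C}_t(\R)]\cdot\alpha_i$ with the parity $|\gamma_i \cap T|$ and checking the trivial pairing with the complementary half of $H_1(\mathcal{C}_t(\C);\Z_2)$. This requires a careful local model along each cycle, in which a twisted edge contributes a half-twist flipping a local orientation of the real locus; equivalently, dividing amounts to the complement $\mathcal{C}_t(\C)\setminus\mathcal{C}_t(\R)$ being $2$-colourable compatibly with $\sigma$, and the obstruction to extending the colouring once around $\gamma_i$ is precisely $|\gamma_i\cap T| \bmod 2$. Since the statement is due to Haas, an alternative and shorter route --- which I would present if the local model proves cumbersome --- is to deduce it directly from his combinatorial patchworking criterion by translating his condition on sign distributions into the twisted-edge language via the sign--twist correspondence of \Cref{PropTwistSign+}; the reduction of the first paragraph then shows that the two formulations agree.
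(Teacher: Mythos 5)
The paper does not actually prove \Cref{ThDividing+}: it is quoted from Haas, and the only ``proof'' implicit in the text is exactly your fallback route, namely that Haas's criterion is stated for sign distributions in combinatorial patchworking and becomes the condition $|\gamma\cap T|\equiv 0 \bmod 2$ under the sign--twist dictionary of \Cref{PropTwistSign+} together with \Cref{ThViro2}. Your first paragraph (additivity of $\gamma\mapsto|\gamma\cap T|\bmod 2$ on the cycle space and reduction to the primitive cycles, i.e.\ to the vanishing of the diagonal of $A_T$) is correct and is how the condition is used throughout the paper. So if you take the citation route, your write-up is consistent with what the paper does.

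Your primary route, however, has a genuine gap exactly where you flag it. The entire mathematical content of the statement is the identity $[\mathcal{C}_t(\R)]\cdot\alpha_i \equiv |\gamma_i\cap T| \bmod 2$ for cycles $\alpha_i$ of $\mathcal{C}_t(\C)$ lying over the primitive cycles, together with the triviality of the pairing against the vanishing cycles; you assert both but prove neither, and the ``half-twist flips a local orientation'' heuristic is not a proof --- making it precise requires the local model of $\mathcal{C}_t(\C)$ over a trivalent vertex and over a twisted versus untwisted edge, which is essentially the computation carried out by Renaudineau--Shaw for \Cref{ThConnectHom+} and by Haas in the patchworking model. Two smaller points: the criterion ``$\mathcal{C}$ dividing $\iff[\mathcal{C}(\R)]=0$ in $H_1(\mathcal{C}(\C);\Z_2)$'' is only valid when $\mathcal{C}(\R)\neq\emptyset$ (a disjoint union of nonempty circles separates iff it is null-homologous mod $2$, but the empty set is null-homologous without separating); this is harmless here since the real part of a curve near the tropical limit is never empty, but it should be said. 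And if ``cycle'' is read as a closed walk rather than an element of the cycle space, edges traversed twice must be discounted before invoking additivity. As written, then, the proposal is an outline of a genuinely different (and more self-contained) argument than the paper's citation, but it does not yet constitute a proof.
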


\begin{definition}
A twist-admissible set $T$ of bounded edges of $C$ is \emph{dividing} if \Cref{EqDiv+} is satisfied for all cycles $\gamma$ in $C$. 
We denote by $\Div (C) \subset \Adm (C)$ the set of all dividing sets of bounded edges on $C$.
\end{definition}

\begin{remark}
Let $g$ be the number of primitive cycles of a non-singular tropical curve $C$.
The linear conditions from \Cref{EqDiv+} imply that the set $\Div (C)$ has a $\Z_2$-vector space structure induced from $\Adm (C)$.
In particular, the $\Z_2$-vector space $\Div (C)$ has codimension $l$ in $\Adm (C)$, for $l\leq g$ the number of primitive cycles $\gamma$ of $C$ such that $\gamma$ contains at least one edge of direction $(1,0) \mod 2$, one edge of direction $(0,1) \mod 2$ and one edge of direction $(1,1) \mod 2$.
\end{remark}

\begin{definition}
Let $C$ be a non-singular tropical curve in $\R^2$.
We say that an edge $e$ of $C$ is \emph{exposed} if $e$ lies on the boundary of an unbounded connected component of $\R^2 \backslash C$.
For $C'$ the compactification of $C$ in a non-singular projective tropical toric surface $\T \pr_\Sigma$, we say that an edge $e$ of $C'$ is exposed if the corresponding edge of $C$ is exposed.  
We denote by $\Exp (C)$ the set of all exposed edges on $C$.
\end{definition}

Haas gave a necessary and sufficient condition to construct $M$-curves via combinatorial patchworking \cite[Theorem 7.3.0.10]{haas1997real}, which can be translated in terms of real tropical curves as follows.

\begin{figure}
\begin{center}
\includegraphics[width=0.45\textwidth]{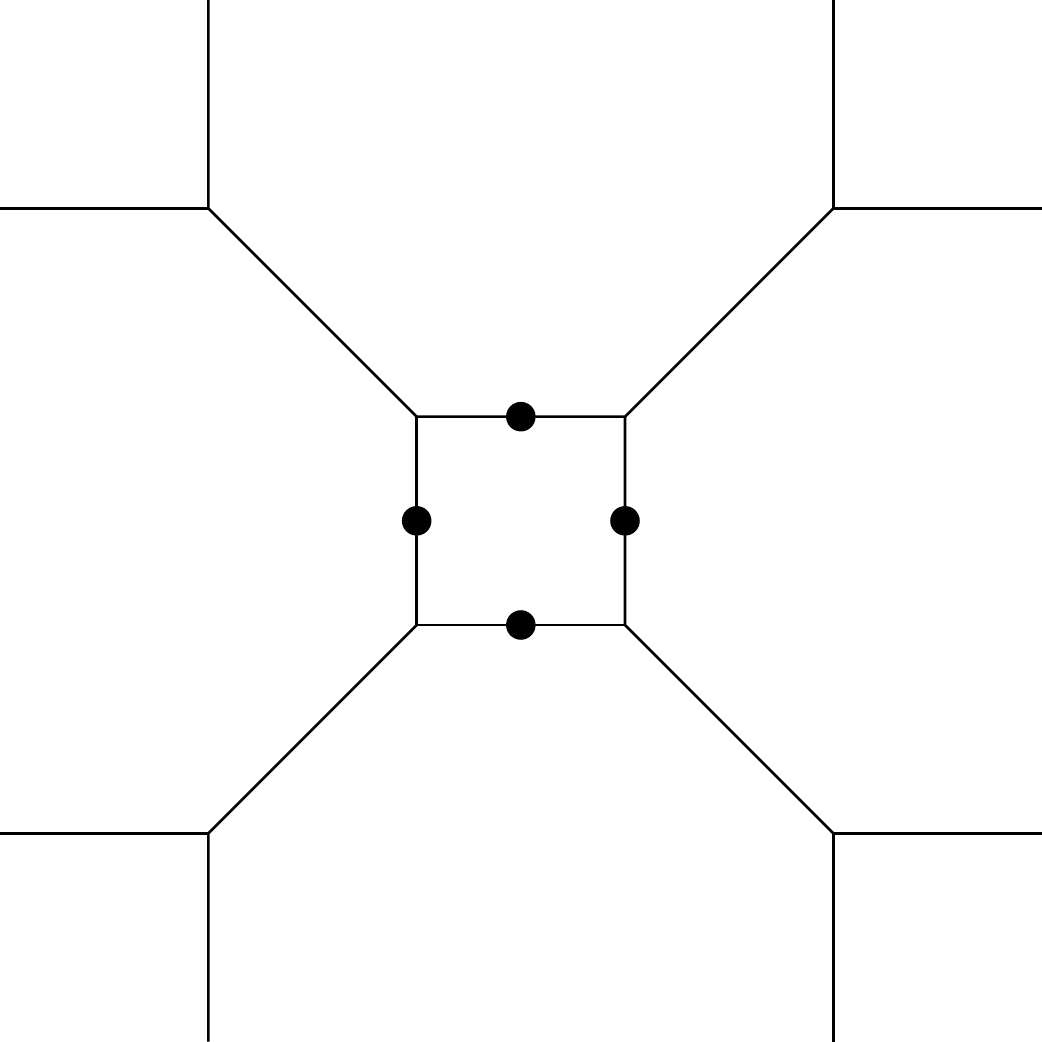}
\end{center}
\caption{Tropical curve with maximal set of twisted edges in Example \ref{ExMCurve}}
\label{FigMCurve}
\end{figure}

\begin{theorem}[\cite{haas1997real}]
\label{ThHaas}
Let $(C,\E)$ be a non-singular real tropical curve in a non-singular projective tropical toric surface $\T \pr_\Sigma$ with admissible set of twisted edges $T$.
Then $(C,\E )$ is maximal if and only if 
\begin{enumerate}
\item \label{ItemMax1} the set of twisted edges $T$ is dividing;
\item \label{ItemMax2} every edge of $T$ is exposed.
\end{enumerate}
\end{theorem}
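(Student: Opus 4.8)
The plan is to recast maximality as the vanishing of the symmetric intersection matrix $A_T$ from \Cref{DefMatrixTwist}, and then to extract the two combinatorial conditions by reading its diagonal and off-diagonal entries separately. First I would note that by \Cref{ThExactSeq} the curve $(C,\E)$ is maximal exactly when $\dim \ker \partial_T = g$; since $\partial_T$ is defined on the $g$-dimensional space $H_1(C;\Z_2)$, this forces $\partial_T = 0$. By \Cref{ThConnectHom+} we have $\dim \ker A_T = \dim \ker \partial_T$, and as $A_T$ is a $g\times g$ matrix, $\dim \ker A_T = g$ is equivalent to $A_T = 0$. Thus maximality is equivalent to the vanishing of every entry $|\gamma_i \cap \gamma_j \cap T| \bmod 2$.

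Next I would analyse the diagonal. Its entries are $|\gamma_i \cap T| \bmod 2$. The assignment $\gamma \mapsto |\gamma \cap T| \bmod 2$ is $\Z_2$-linear on $H_1(C;\Z_2)$, since for cycles $\gamma,\gamma'$ one has $|(\gamma+\gamma')\cap T| = |\gamma\cap T| + |\gamma'\cap T| - 2|\gamma\cap\gamma'\cap T| \equiv |\gamma\cap T| + |\gamma'\cap T| \bmod 2$. As the primitive cycles form a basis of $H_1(C;\Z_2)$, the vanishing of all diagonal entries is equivalent to $|\gamma\cap T| = 0 \bmod 2$ for every cycle $\gamma$, that is, to $T$ being dividing in the sense of \Cref{ThDividing+}. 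This yields condition~(\ref{ItemMax1}).

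For the off-diagonal entries I would pass to the dual subdivision $\Delta_C$. The primitive cycles $\gamma_1,\ldots,\gamma_g$ correspond bijectively to the interior lattice points of $\Delta$, with $\gamma_i$ dual to the star of the vertex $v_i \in \Delta_C$; an edge $e$ of $C$ lies in $\gamma_i\cap\gamma_j$ if and only if its dual edge $e^\vee$ joins $v_i$ to $v_j$. Because $\Delta_C$ is a triangulation, hence a simplicial complex, at most one edge joins $v_i$ to $v_j$, so $\gamma_i\cap\gamma_j$ is empty or a single bounded edge $e_{ij}$. Such an edge separates two bounded regions of $\R^2\backslash C$ and is therefore non-exposed, and conversely every non-exposed bounded edge arises this way (its dual joins two interior lattice points). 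Consequently $|\gamma_i\cap\gamma_j\cap T| \bmod 2$ vanishes for all $i\neq j$ precisely when no non-exposed edge belongs to $T$, i.e. when every edge of $T$ is exposed, which is condition~(\ref{ItemMax2}). Combining the two analyses, $A_T = 0$ is equivalent to (\ref{ItemMax1}) and (\ref{ItemMax2}) holding simultaneously, proving the theorem.

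The step I expect to be the main obstacle is the geometric dictionary of the third paragraph: justifying cleanly that two distinct primitive cycles share at most one edge (via the simplicial structure of $\Delta_C$) and, above all, matching the $\R^2$-based notion of exposedness with the bounded/unbounded dichotomy of complementary regions dual to interior versus boundary lattice points of $\Delta$, while keeping careful track of the compactification inside $\T\pr_\Sigma$ so that the primitive cycles of $C$ are exactly the $g$ bounded regions and not the regions capped off by the toric boundary.
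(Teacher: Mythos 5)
Your proof is correct, but it is worth noting that the paper does not actually prove \Cref{ThHaas}: it is stated as a translation of Haas's result (citing \cite{haas1997real}) and only remarked upon, so there is no in-paper argument to match. What you have written is essentially the Renaudineau--Shaw recovery of Haas's theorem, and it is precisely the rank-zero instance of the analysis the paper carries out for ranks $1$ and $2$ in \Cref{ThM-1} and \Cref{ThM-2}: maximality $\Leftrightarrow \dim\ker\partial_T = g \Leftrightarrow A_T = 0$, followed by reading off the diagonal and off-diagonal of the matrix of \Cref{DefMatrixTwist}. Both of the combinatorial inputs you isolate are sound: the primitive cycles form a basis of the cycle space $H_1(C;\Z_2)$, so the mod-$2$ additivity of $\gamma \mapsto |\gamma \cap T|$ under symmetric difference upgrades the vanishing of the diagonal to the dividing condition \Cref{EqDiv+} for all cycles; and since $\Delta_C$ is a triangulation, two interior lattice points span at most one edge, so $|\gamma_i\cap\gamma_j\cap T| \equiv 0$ for $i\neq j$ is genuinely equivalent to $\gamma_i\cap\gamma_j\cap T=\emptyset$, i.e.\ to $T$ containing no non-exposed edge. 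One small contrast with the paper's framing: the remark after \Cref{ThHaas} attributes condition~(\ref{ItemMax1}) to the external fact that every maximal real algebraic curve is dividing, whereas your derivation obtains it internally from the diagonal of $A_T$ together with \Cref{ThDividing+}, which is arguably cleaner and keeps the whole statement within the tropical homology machinery. The caveat you flag at the end (primitive cycles are the boundaries of the bounded components of $\R^2\setminus C$, dual to the interior lattice points, and the compactification in $\T\pr_\Sigma$ adds no new cycles) is exactly the right point to be careful about, and your resolution of it is correct.
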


Note that \Cref{ItemMax1} of \Cref{ThHaas} comes from the fact that every maximal real algebraic curve is dividing.

\begin{definition}
We say that a twist-admissible subset $T$ of bounded edges of a non-singular tropical curve $C$ is \emph{maximal} if $T$ satisfies \Cref{ItemMax1} and \Cref{ItemMax2} in \Cref{ThHaas}.
We denote by $\Max (C)$ the set of all maximal subsets $T$ of bounded edges of $C$.
\end{definition}

\begin{remark}
Let $C$ be a non-singular tropical curve with $g$ primitive cycles.
\Cref{ItemMax2} of \Cref{ThHaas} imply $l$ linear conditions on the space $\Div (C)$, for $l \leq \binom{g}{2}$ the number of pairs of distinct primitive cycles of $C$ with non-empty intersection.
Then the set $\Max (C)$ has a $\Z_2$-vector space structure induced from $\Div (C)$, and is of codimension $l$ in $\Div (C)$.
The same set without twist-admissibility condition (\Cref{EqAdm+}) has a $\Z_2$-vector space structure induced from $\Edge^0 (C)$, as shown in \cite{renaudineau2017haas}.
\end{remark}

\begin{example}
\label{ExMCurve}
Every non-singular real tropical curve $(C,\E )$ with empty set of twisted edges is maximal.
In \Cref{FigMCurve}, the edges marked by black dots on the tropical curve form an example of non-empty maximal set of bounded edges.
Indeed, all these edges are exposed, and the unique cycle in the tropical curve contains an even number of twisted edges.
\end{example}

\subsection{(M-1) curves}

We want to give several new combinatorial conditions, starting by $(M-1)$ curves. 
Before stating the theorem, we need the following definition:

\begin{figure}
\centering
\begin{subfigure}[t]{0.475\linewidth}
	\centering
	\includegraphics[width=0.9\textwidth]{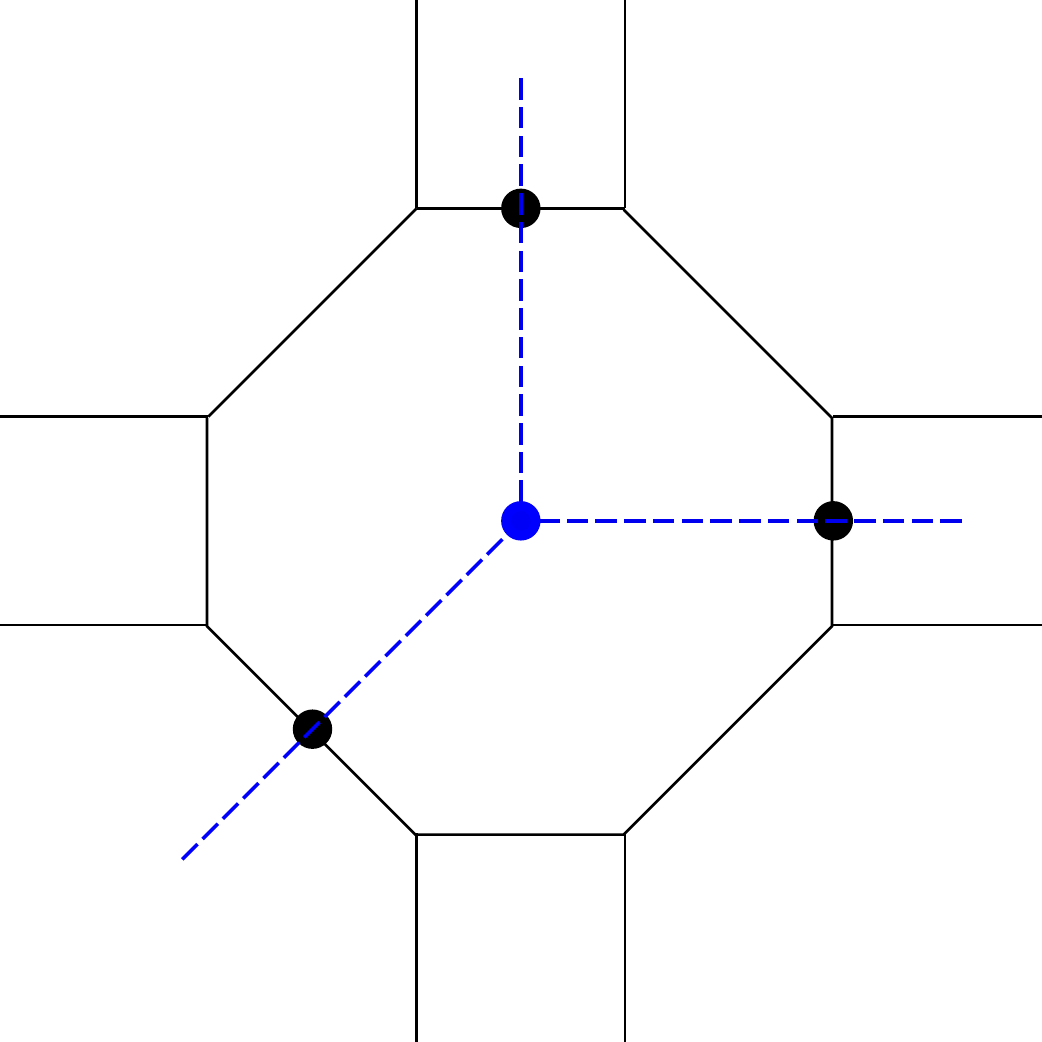}
	\caption{Complete graph $K_1$ and exposed twisted edges.}
	\label{FigK1}
\end{subfigure}\hfill
\begin{subfigure}[t]{0.475\linewidth}
	\centering
	\includegraphics[width=0.9\textwidth]{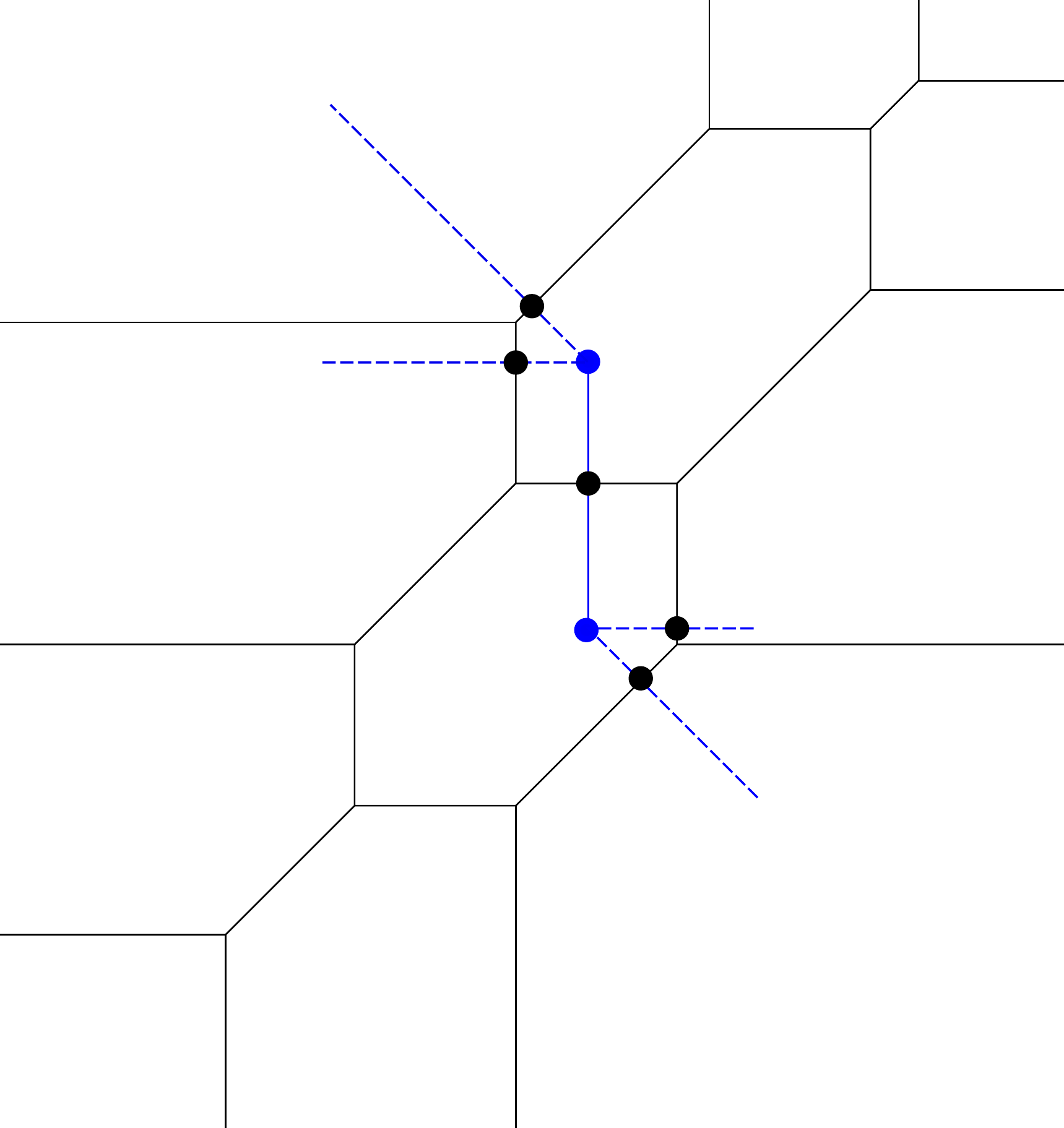}
	\caption{Complete graph $K_2$ and exposed twisted edges.}
	\label{FigK2}
\end{subfigure}
\caption{Example \ref{ExM-1}.}
\label{type12}
\end{figure}

\begin{definition}
Let $C$ be a non-singular tropical curve in a non-singular projective tropical toric surface $\T \pr_\Sigma$, and let $T$ be a set of bounded edges on $C$.
The \emph{dual graph of non-exposed twisted edges} $\Gamma_T$ is defined as follows.
\begin{itemize}
\item The set of vertices $\Vertex (\Gamma_T )$ of $\Gamma_T$ is the set of primitive cycles of $C$.
\item The set of edges $\Edge (\Gamma_T )$ of $\Gamma_T$ is given by the pairs of distinct primitive cycles $\gamma \neq \gamma '$ such that $\gamma \cap \gamma ' \cap T \neq \emptyset$ (seeing the primitive cycles as subsets of bounded edges of $C$).  
\end{itemize} 
\end{definition}

\begin{theorem}[\Cref{IntroThM-1}]
\label{ThM-1}
Let $(C,\E )$ be a non-singular real tropical curve in a non-singular projective tropical toric surface $\T \pr_\Sigma$, with admissible set of twisted edges $T$.
Then $(C,\E)$ is a $(M-1)$ real tropical curve if and only if:
\begin{enumerate}
\item \label{Item1M-1} the dual graph of non-exposed twisted edges $\Gamma_T$ consists of a complete subgraph $K_n \subset \Gamma_T$ on $1\leq n\leq 4$ vertices plus isolated vertices;
\item \label{Item2M-1} every primitive cycle $\gamma$ in $\Vertex (K_n)$, seen as a subset of bounded edges of $C$, satisfies \[ |  \gamma \cap T | = 1 \mod 2 ; \] 
\item \label{Item3M-1} every primitive cycle $\gamma$ in $\Vertex (\Gamma_T) \backslash \Vertex (K_n)$, seen as a subset of bounded edges of $C$, satisfies \[ | \gamma \cap T | = 0 \mod 2 . \]
\end{enumerate} 
\end{theorem}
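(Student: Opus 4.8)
The plan is to reduce the statement to a pure linear-algebra fact about the matrix $A_T$ of \Cref{DefMatrixTwist}, and then read off the three combinatorial conditions, the planarity of the dual subdivision providing the bound $n\le 4$. First I would invoke \Cref{ThExactSeq}, by which $(C,\E)$ is $(M-r)$ with $r=\rank\partial_T$; hence $(C,\E)$ is $(M-1)$ exactly when $\rank\partial_T=1$. By \Cref{ThConnectHom+} the pairing $\langle\partial_T[\gamma_i],[\gamma_j]\rangle$ equals the $(i,j)$-entry $|\gamma_i\cap\gamma_j\cap T|\bmod 2$ of $A_T$, and $\dim\ker\partial_T=\dim\ker A_T$, so $\rank\partial_T=\rank A_T$. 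Thus the whole theorem becomes the assertion that $A_T$ has rank $1$ if and only if conditions \Cref{Item1M-1}--\Cref{Item3M-1} hold.

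The core step is to classify symmetric $\Z_2$-matrices of rank one. I would show that a symmetric matrix $A$ over $\Z_2$ has rank $1$ if and only if $A=\mathbf{1}_S\mathbf{1}_S^{\top}$ for a unique nonempty index set $S$, i.e.\ $A_{ij}=1$ precisely when $i,j\in S$. The reason the answer is this rigid (and not a general $vv^{\top}$ with $v$ of arbitrary support) is that an alternating matrix over any field has even rank, so a rank-one symmetric matrix cannot have vanishing diagonal; a short column-by-column argument using symmetry then forces the generating column to be the indicator vector $\mathbf{1}_S$ of the set $S$ of nonzero columns. Applying this to $A_T$ with $S:=\Vertex(K_n)$, the diagonal entries translate directly into \Cref{Item2M-1} (odd $|\gamma\cap T|$ for $\gamma\in S$) and \Cref{Item3M-1} (even $|\gamma\cap T|$ for $\gamma\notin S$), while the off-diagonal entries say exactly that $\Gamma_T$ restricts to the complete graph on $S$ and leaves every other primitive cycle isolated, which is \Cref{Item1M-1}.

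It remains to explain the restriction $1\le n\le 4$, which I expect to be the only genuinely geometric input and the main obstacle. The point is that an edge of $\Gamma_T$ between $\gamma_i$ and $\gamma_j$ requires these two primitive cycles to share a (twisted) edge of $C$, hence the two bounded complementary regions they bound to be adjacent; equivalently, the interior lattice points $p_i,p_j\in\Delta$ dual to $\gamma_i,\gamma_j$ are joined by an edge of the triangulation $\Delta_C$. Therefore $\Gamma_T$ is a subgraph of the region-adjacency graph of the planar subdivision induced by $C$ (equivalently of the $1$-skeleton of $\Delta_C$ restricted to interior points), which is a planar graph. Since $K_5$ is not planar, any complete subgraph has at most $4$ vertices, so $n=|S|\le 4$; this bound is thus automatic rather than an extra hypothesis.

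Finally I would assemble the two directions. If $(C,\E)$ is $(M-1)$ then $A_T$ has rank $1$, so $A_T=\mathbf{1}_S\mathbf{1}_S^{\top}$, and the translation above yields \Cref{Item1M-1}--\Cref{Item3M-1} with $\Vertex(K_n)=S$ and $n\le 4$ by planarity. Conversely, given \Cref{Item1M-1}--\Cref{Item3M-1}, the same dictionary shows $A_T=\mathbf{1}_S\mathbf{1}_S^{\top}$ with $S=\Vertex(K_n)$ nonempty, which has rank $1$; hence $\rank\partial_T=1$ and $(C,\E)$ is $(M-1)$.
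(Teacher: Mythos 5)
Your proposal is correct and follows essentially the same route as the paper: reduce to $\rank A_T=1$ via \Cref{ThExactSeq} and \Cref{ThConnectHom+}, classify rank-one symmetric $\Z_2$-matrices as $zz^{\top}$ with $z$ an indicator vector, translate the diagonal and off-diagonal entries into conditions \Cref{Item2M-1}/\Cref{Item3M-1} and \Cref{Item1M-1} respectively, and get $n\le 4$ from planarity and Kuratowski. Your justification of the indicator-vector form (via the even rank of alternating matrices) is a small extra detail the paper leaves implicit, but the argument is the same.
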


\begin{remark}
\label{RemComplete}
Since a non-singular tropical curve $C$ in a non-singular projective tropical toric surface $\T \pr_\Sigma$ is a planar graph, the dual graph of non-exposed twisted edges cannot contain a complete subgraph $K_n$ on $n \geq 5$ vertices by Kuratowski's Theorem \cite{kuratowski1930probleme}. 
\end{remark}

\begin{figure}
\centering
\begin{subfigure}[t]{0.475\linewidth}
	\centering
	\includegraphics[width=0.9\textwidth]{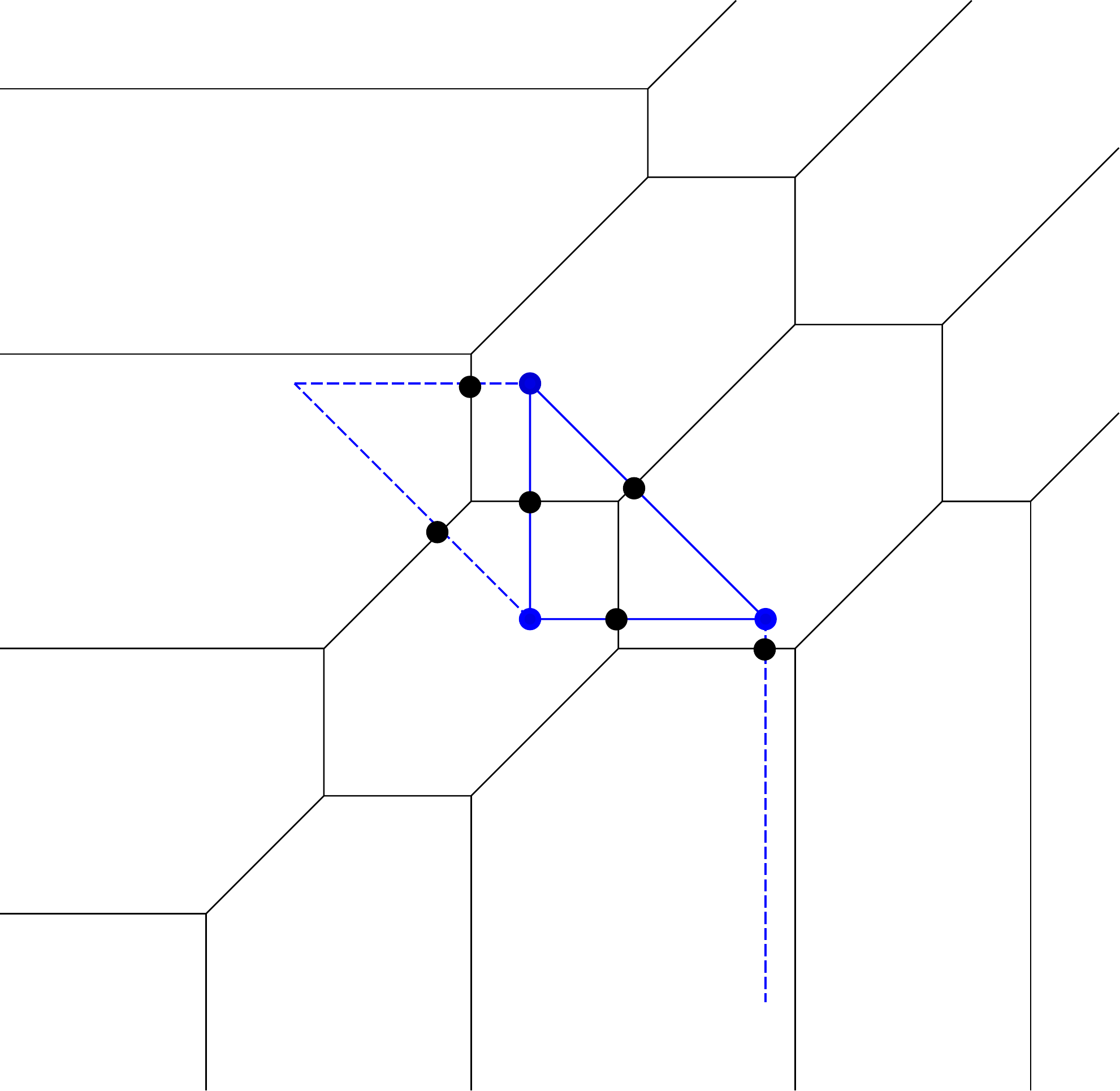}
	\caption{Complete graph $K_3$ and exposed twists.}
	\label{FigK3}
\end{subfigure}\hfill
\begin{subfigure}[t]{0.475\linewidth}
	\centering
	\includegraphics[width=0.9\textwidth]{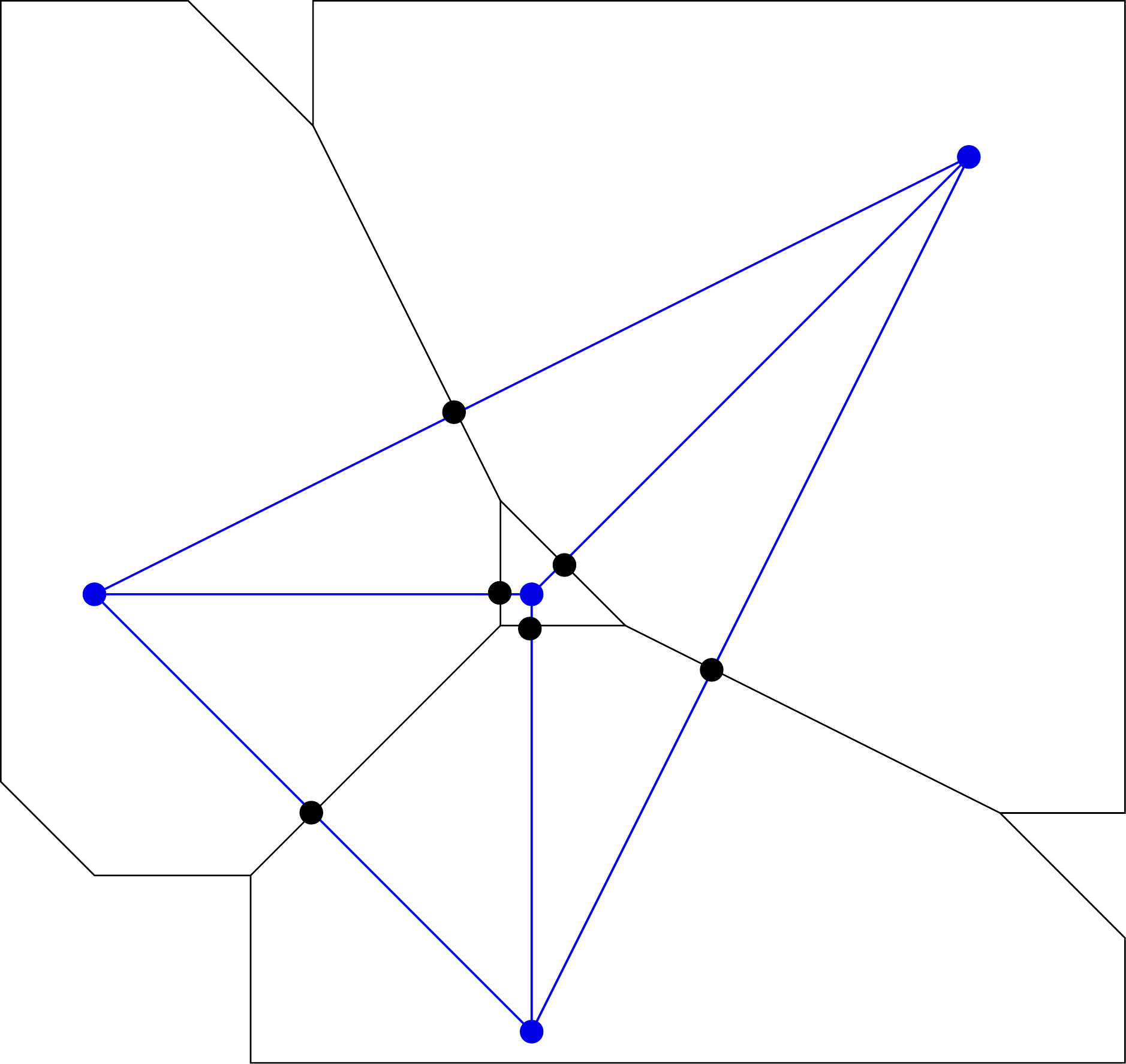}
	\caption{Complete graph $K_4$.}
	\label{FigK4}
\end{subfigure}
\caption{Example \ref{ExM-1}.}
\label{type34}
\end{figure}

\begin{example}
\label{ExM-1}
Figures \ref{type12} and \ref{type34} represent examples of tropical curves with set of twisted edges (marked by black dots) such that their dual graph of non-exposed twisted edges consists of a complete graph $K_n, 1\leq n\leq 4$, represented in plain blue.
In particular, the graph $K_1$ in \Cref{FigK1} is the single blue point.
The dotted blue edges are dual to the exposed twisted edges.
All those sets of twisted edges give rise to $(M-1)$ real tropical curves by \Cref{ThM-1}.
Note that we do not represent all the edges of the tropical curve in \Cref{FigK4}.
\end{example}

In order to prove the theorem, we recall several standard definitions in graph theory (see for instance \cite{godsil2013algebraic}).

\begin{definition}
Let $\Gamma$ be a graph (without loop and without multiple edges) with $\Vertex (\Gamma ) =\{ v_1 , \ldots , v_r  \}$.
The \emph{adjacency matrix} $N$ of $\Gamma$ is the $r\times r$-matrix with coefficients in $\Z_2$ such that $N_{ij} = 1$ if the vertices $v_i$ and $v_j$ are adjacent, and $N_{ij} = 0$ otherwise (in particular, the diagonal coefficients of $N$ are all zero as $\Gamma$ has no loop). 
The \emph{valency matrix mod 2} of $\Gamma$ is the $r\times r$ diagonal matrix $D$ with coefficients $D_{ii}$ given as the valency of $v_i$ modulo 2.
The \emph{Laplacian mod 2} of $\Gamma$ is the matrix $Q_\Gamma := D+N$ (in particular it is a symmetric $r\times r$-matrix with coefficients in $\Z_2$).
\end{definition}

\begin{proof}[Proof of Theorem \ref{ThM-1}]\renewcommand{\qedsymbol}{}
Let $\{ \gamma_1 ,\ldots , \gamma_g \}$ be the primitive cycles of $C$.
Let $Q_{\Gamma_T}$ be the Laplacian mod 2 of the dual graph of non-exposed twisted edges $\Gamma_T$ with respect to $\{ \gamma_1 ,\ldots , \gamma_g \}$.
Seeing the primitive cycles as subsets of bounded edges of $C$, let $D_T$ be the $g\times g$ diagonal matrix with $\Z_2$-coefficients 
\[ (D_T)_{ii} = | \gamma_i \cap T \cap \Exp (C) | \mod 2   . \]
Then the matrix $A_T$ is given by $A_T = Q_{\Gamma_T} + D_T$.
Now $(C,\E )$ is a $(M-1)$ real tropical curve if and only if  $\rank A_T = 1$ by \Cref{ThConnectHom+}.  
A symmetric $g\times g$-matrix of rank 1 with coefficients in $\Z_2$ is the product of a vector $z\in \Z_2^g$ with its transpose. Up to permutation of its coefficients, we can write \[ z^t = (1,\ldots ,1,0,\ldots,0). \]
Therefore, up to renumbering the primitive cycles $\gamma_i$, the matrix $A_T$ has the form
\[ \left(\begin{NiceMatrix}
1 & \Cdots & 1 & 0 & \Cdots & 0 \\
\Vdots & \Ddots & \Vdots & \Vdots & \Ddots & \Vdots \\
1 & \Cdots & 1 & 0 & \Cdots & 0 \\
0 & \Cdots & 0 & 0 & \Cdots & 0 \\
\Vdots & \Ddots & \Vdots & \Vdots & \Ddots & \Vdots \\
0 & \Cdots & 0 & 0 & \Cdots & 0 \\
\end{NiceMatrix}\right) . \]
Let $Q = D+N$ be the diagonal bloc of $A_T$ with only non-zero coefficients, with $D$ the diagonal matrix of $Q$.
We obtain the following facts:
\begin{enumerate}
\item the matrix $N$ is the adjacency matrix of a complete graph $K_n$ (with $1\leq n \leq 4$ by Remark \ref{RemComplete}), since all its non-diagonal coefficients are non-zero;
\item every primitive cycle in $\Vertex (\Gamma_T) \backslash \Vertex (K_n)$ is isolated, since the non-diagonal coefficients of $A_T$ not belonging to the minor $Q$ are all zero;
\item every primitive cycle $\gamma$ in $\Vertex (K_n)$ satisfies $| \gamma \cap T | = 1 \mod 2$, since the diagonal coefficients of $Q$ are all non-zero; 
\item every primitive cycle $\gamma$ in $\Vertex (\Gamma_T) \backslash \Vertex (K_n)$ satisfies $|\gamma \cap T | = 0 \mod 2$, since the diagonal coefficients of $A_T$ not belonging to the minor $Q$ are all zero. \quad \quad \quad \quad $\blacksquare$
\end{enumerate} 
\end{proof}

\begin{definition}
Let $C$ be a non-singular tropical curve, and let $T,T'$ be two twist-admissible set of bounded edges on $C$.
We say that $T$ and $T'$ are \emph{cycle-disjoint} if for every primitive cycle $\gamma$ on $C$, we have 
\begin{align*}
\gamma \cap T \neq \emptyset & \Rightarrow \gamma \cap T' = \emptyset \\
\text{ and } \gamma \cap T' \neq \emptyset & \Rightarrow \gamma \cap T = \emptyset .
\end{align*}
\end{definition}

\begin{corollary}
\label{CorM-rNonDiv}
Let $(C,\E)$ be a non-singular real tropical curve with admissible set of twisted edges $T = T_1 \sqcup \ldots \sqcup T_r$, such that the dual graphs of non-exposed twisted edges $\Gamma_{T_i}$ are all disjoint and they all satisfy \Cref{ThM-1}.
Then $(C,\E )$ is a $(M-r)$ real tropical curve which is non-dividing. 
\end{corollary}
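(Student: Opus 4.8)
The plan is to reduce the statement to a single rank computation over $\Z_2$ and to exploit the additivity of the intersection matrix $A_T$ under the disjoint decomposition $T = T_1 \sqcup \cdots \sqcup T_r$. Combining \Cref{ThExactSeq} and \Cref{ThConnectHom+}, the curve $(C,\E)$ is a $(M-\rank A_T)$ real tropical curve, so it suffices to prove $\rank A_T = r$. Writing $\{\gamma_1,\ldots,\gamma_g\}$ for the primitive cycles of $C$ and using that the union defining $T$ is disjoint, the first step is the elementary observation that, for all $i,j$,
\[ (A_T)_{ij} = |\gamma_i \cap \gamma_j \cap T| \;=\; \sum_{k=1}^r |\gamma_i \cap \gamma_j \cap T_k| \;=\; \sum_{k=1}^r (A_{T_k})_{ij} \pmod 2, \]
so that $A_T = \sum_{k=1}^r A_{T_k}$ as symmetric matrices over $\Z_2$.

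Next I would invoke the structure extracted in the proof of \Cref{ThM-1}: since each $T_k$ satisfies the hypotheses of that theorem, each $A_{T_k}$ is symmetric of rank $1$, hence of the form $A_{T_k} = z_k z_k^{\,t}$ for a nonzero $z_k \in \Z_2^g$, namely the indicator vector of the vertex set $\Vertex(K_{n_k})$ of the complete subgraph attached to $T_k$. The disjointness hypothesis on the graphs $\Gamma_{T_i}$ is precisely what forces the supports $\Vertex(K_{n_k})$ to be pairwise disjoint, so the $z_k$ have pairwise disjoint supports and are, in particular, linearly independent over $\Z_2$. The rank of $A_T = \sum_k z_k z_k^{\,t}$ is then read off from its kernel: since $A_T v = \sum_k (z_k^{\,t} v)\, z_k$ and the $z_k$ are independent, $v \in \ker A_T$ if and only if $z_k^{\,t} v = 0$ for all $k$, i.e. $\ker A_T = W^\perp$ where $W \subseteq \Z_2^g$ is the subspace spanned by $z_1,\ldots,z_r$. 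As the standard bilinear form on $\Z_2^g$ is non-degenerate, $\dim W^\perp = g - \dim W = g - r$, whence $\rank A_T = r$ and $(C,\E)$ is a $(M-r)$ real tropical curve.

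For the non-dividing claim I would exhibit a single primitive cycle with odd intersection with $T$, contradicting the criterion \eqref{EqDiv+} of \Cref{ThDividing+}. Pick any $\gamma \in \Vertex(K_{n_1})$, which is nonempty since $n_1 \geq 1$. The second condition of \Cref{ThM-1} applied to $T_1$ gives $|\gamma \cap T_1| = 1 \bmod 2$, while for each $k \neq 1$ the disjointness of supports places $\gamma$ outside $\Vertex(K_{n_k})$, so the third condition of \Cref{ThM-1} applied to $T_k$ gives $|\gamma \cap T_k| = 0 \bmod 2$. Summing over $k$ yields $|\gamma \cap T| = 1 \bmod 2$, so \eqref{EqDiv+} fails and $(C,\E)$ is non-dividing (note this also re-confirms $(A_T)_{\gamma\gamma} = 1$, consistent with the support of $z_1$).

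The step demanding the most care is the passage from the hypothesis that the graphs $\Gamma_{T_i}$ are disjoint to the linear independence of the vectors $z_k$. Here I would make explicit that, since all the $\Gamma_{T_i}$ share the same vertex set of \emph{all} primitive cycles of $C$, their disjointness can only refer to their non-isolated parts; the correct reading is that the vertex supports $\Vertex(K_{n_k})$ are pairwise disjoint, equivalently that no primitive cycle meets two distinct pieces $T_i$ nontrivially. Once this interpretation is pinned down, the remaining steps are routine linear algebra over $\Z_2$, the only subtlety being the appeal to non-degeneracy of the dot product to obtain $\dim W + \dim W^\perp = g$ despite the possibility that $W$ is isotropic.
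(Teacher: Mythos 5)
Your proof is correct and follows essentially the same route as the paper: both arguments rest on the additivity $A_T=\sum_k A_{T_k}$ over the disjoint union, the rank-one form $A_{T_k}=z_kz_k^{\,t}$ supplied by \Cref{ThM-1}, the disjointness of the supports $\Vertex(K_{n_k})$, and a primitive cycle with odd intersection number to violate \eqref{EqDiv+}. The only (cosmetic) difference is that the paper reorders the primitive cycles to exhibit $A_T$ as block-diagonal and adds the ranks of the blocks, whereas you compute $\ker A_T=W^{\perp}$ for $W=\Z_2\langle z_1,\ldots,z_r\rangle$ and invoke non-degeneracy of the standard pairing; your reading of the ``disjointness'' hypothesis matches the paper's notion of cycle-disjointness.
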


\begin{proof}
By assumption, the $T_i$'s are all cycle-disjoint, hence there exists an ordering of the primitive cycles of $C$ such that the matrix $A_T$ satisfying this ordering is bloc-diagonal, with each non-zero bloc being the non-zero bloc of the matrix $A_{T_i}$ satisfying the same ordering of primitive cycles.
Then \[ \rank A_T = \sum_{i=1}^r \rank A_{T_i} = r \] by \Cref{ThM-1}, hence $(C,\E)$ is a $(M-r)$ real tropical curve by \Cref{ThConnectHom+}.
Since every bloc has non-zero diagonal, we obtain by \Cref{ThDividing+} that $(C,\E)$ is non-dividing. 
\end{proof}

\subsection{Dividing (M-2) curves}


Let $(C,\E)$ be a non-singular dividing real tropical curve.
By \Cref{CorM-rNonDiv}, the set of twisted edges on $(C,\E )$ cannot be given as cycle-disjoint union of $(M-1)$ configurations of twists, that is a cycle-disjoint union of configurations of twists satisfying \Cref{ThM-1}.
This motivates the following result.
%
%
%

\begin{theorem}[\Cref{IntroThM-2}]
\label{ThM-2}
Let $(C,\E )$ be a non-singular real tropical curve in a non-singular projective tropical toric surface $\T \pr_\Sigma$, with dividing set of twisted edges $T$.
Then $(C,\E)$ is a dividing $(M-2)$ real tropical curve if and only if the dual graph of non-exposed twisted edges $\Gamma_T$ consists of 
\begin{enumerate}
\item \label{Item1M-2} either a complete planar bipartite graph and isolated vertices;
\item \label{Item2M-2} or a complete planar tripartite graph and isolated vertices.
\end{enumerate}
\end{theorem}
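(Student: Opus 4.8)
The plan is to mimic the structure of the proof of \Cref{ThM-1}, reducing the divisibility statement to a rank computation for the symmetric matrix $A_T$ via \Cref{ThConnectHom+}. Since $T$ is dividing, \Cref{ThDividing+} forces $|\gamma \cap T| = 0 \mod 2$ for every cycle $\gamma$, and in particular every primitive cycle satisfies $|\gamma \cap T| = 0 \mod 2$. This means all diagonal entries $(A_T)_{ii} = |\gamma_i \cap T| \mod 2$ vanish, so $A_T$ is a symmetric $\Z_2$-matrix with zero diagonal, i.e.\ the adjacency matrix of a graph --- and that graph is exactly $\Gamma_T$, since the off-diagonal entry $(A_T)_{ij} = |\gamma_i \cap \gamma_j \cap T| \mod 2$ records adjacency of $\gamma_i, \gamma_j$ in $\Gamma_T$. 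Thus $A_T = Q_{\Gamma_T}$ with $D_T = 0$, and by \Cref{ThConnectHom+} the curve $(C,\E)$ is a $(M-2)$ real tropical curve if and only if $\rank_{\Z_2} A_T = 2$.

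The heart of the argument is therefore the purely linear-algebraic classification: \emph{a symmetric $\Z_2$-matrix with zero diagonal has rank $2$ if and only if it is the adjacency matrix of a complete bipartite graph or a complete tripartite graph, together with isolated vertices.} First I would observe that a rank-$2$ symmetric matrix over $\Z_2$ with zero diagonal decomposes, up to permutation of indices, into a nonzero block supported on the non-isolated vertices plus a zero block (the isolated vertices contribute zero rows/columns, exactly as in the proof of \Cref{ThM-1}). Restricting to the nonzero block, I would classify rank-$2$ symmetric alternating (zero-diagonal) forms over $\Z_2$. The column space is $2$-dimensional, so every column lies in $\{0, u, v, u+v\}$ for two fixed independent vectors $u,v$; grouping vertices by which of these vectors their column equals partitions the non-isolated vertices into at most three classes. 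Analysing the adjacency pattern forced by the symmetry and zero-diagonal conditions on these classes yields precisely the complete bipartite (two classes) or complete tripartite (three classes) incidence structure, and conversely one checks directly that the adjacency matrix of $K_{m,n}$ or $K_{m,n,p}$ has rank exactly $2$ over $\Z_2$.

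The planarity qualifier enters only to restrict which bipartite and tripartite graphs can actually occur as $\Gamma_T$: since $C$ embeds in the plane (as in \Cref{RemComplete}, by Kuratowski's theorem), its dual graph of non-exposed twisted edges cannot contain a $K_{3,3}$ or $K_5$ minor, so the complete multipartite graph realised must be planar --- accounting for the words \emph{complete planar bipartite} and \emph{complete planar tripartite} in the statement. I would verify that the planar complete tripartite graphs are exactly those of small part-sizes (so that the tripartite case really is realisable, matching the $K_{2,2,2}$ that appears later in the paper), but for the equivalence itself this is automatic since $\Gamma_T$ is given as a subgraph of a planar graph.

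The main obstacle I expect is the linear-algebra classification step, specifically ruling out symmetric zero-diagonal rank-$2$ configurations that are \emph{not} complete multipartite. The subtlety over $\Z_2$ is that the zero-diagonal (alternating) condition interacts with the three possible nonzero column values $u, v, u+v$: one must check that any two vertices in the \emph{same} class are non-adjacent (forced by zero diagonal and equal columns) while any two in \emph{different} classes are adjacent, and confirm that a genuinely three-class configuration is consistent only when the mutual inner products realise the complete tripartite pattern. Getting this case analysis airtight --- and correctly excluding degenerate possibilities where a class would collapse or produce a non-complete join --- is where the real care is needed; the reduction from the theorem to this statement is otherwise a direct transcription of the $(M-1)$ argument with $D_T = 0$.
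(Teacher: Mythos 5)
Your proposal follows essentially the same route as the paper: reduce to $\rank_{\Z_2} A_T = 2$ via \Cref{ThConnectHom+}, use the dividing condition to kill the diagonal of $A_T$, and then classify rank-$2$ symmetric zero-diagonal matrices by sorting columns into the classes $\{0, v_1, v_2, v_1+v_2\}$, with equal columns forcing non-adjacency and distinct nonzero columns forcing adjacency, yielding the complete bipartite (no column equal to $v_1+v_2$) versus complete tripartite (some column equal to $v_1+v_2$) dichotomy, with planarity handled by Kuratowski exactly as in \Cref{RemComplete2}. The only cosmetic slip is the claim that $D_T = 0$ --- the paper's $D_T$ records exposed twisted edges and need not vanish; what the dividing hypothesis actually gives is that the full diagonal of $A_T = Q_{\Gamma_T} + D_T$ is zero, which is all your argument uses.
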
 

\begin{remark}
\label{RemComplete2}
Since a non-singular tropical curve $C$ in a non-singular projective tropical toric surface $\T \pr_\Sigma$ is a planar graph, by \cite{kuratowski1930probleme}, the dual graph of non-exposed twisted edges $\Gamma_T$ cannot contain a complete bipartite subgraph $K_{3,3}$ on two sets of 3 vertices.
\end{remark}

\begin{example}
\label{ExM-2}
The underlying non-singular tropical curve in Figure \ref{FigM-2Div} is of bidegree $(4,4)$ in $\T \pr^1 \times \T \pr^1$.
In Figure \ref{FigK22}, we have a set of twisted edges (marked by black dots) with dual graph of non-exposed twisted edges (represented in plain blue) consisting of a complete bipartite graph $K_{2,2}$ and isolated vertices.
Notice that there are no exposed twisted edges in this case.
In Figure \ref{FigK221}, we have a set of twisted edges (marked by black dots) with dual graph of non-exposed twisted edges (represented in plain blue) consisting of a complete tripartite graph $K_{2,2,1}$ and isolated vertices.
The edges dual to exposed twisted edges are represented in dotted blue. 
Those two sets of twisted edges give rise to dividing $(M-2)$ real tropical curves by \Cref{ThM-2}.

\begin{figure}
\centering
\begin{subfigure}[t]{0.45\linewidth}
	\centering
	\includegraphics[width=\textwidth]{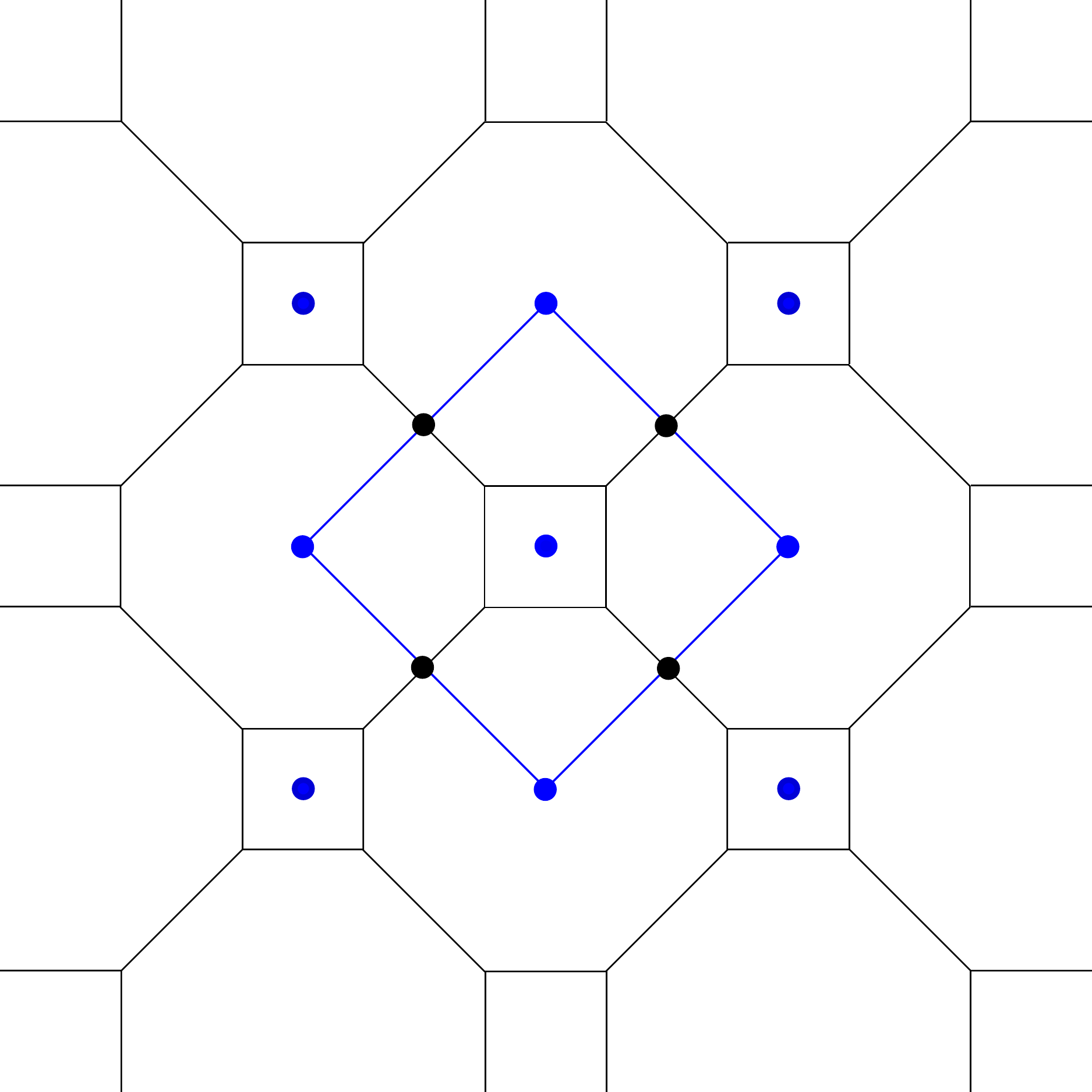}
	\caption{Complete bipartite dual graph of non-exposed twisted edges $K_{2,2}$.}
	\label{FigK22}
\end{subfigure}\hfill
\begin{subfigure}[t]{0.45\linewidth}
	\centering
	\includegraphics[width=\textwidth]{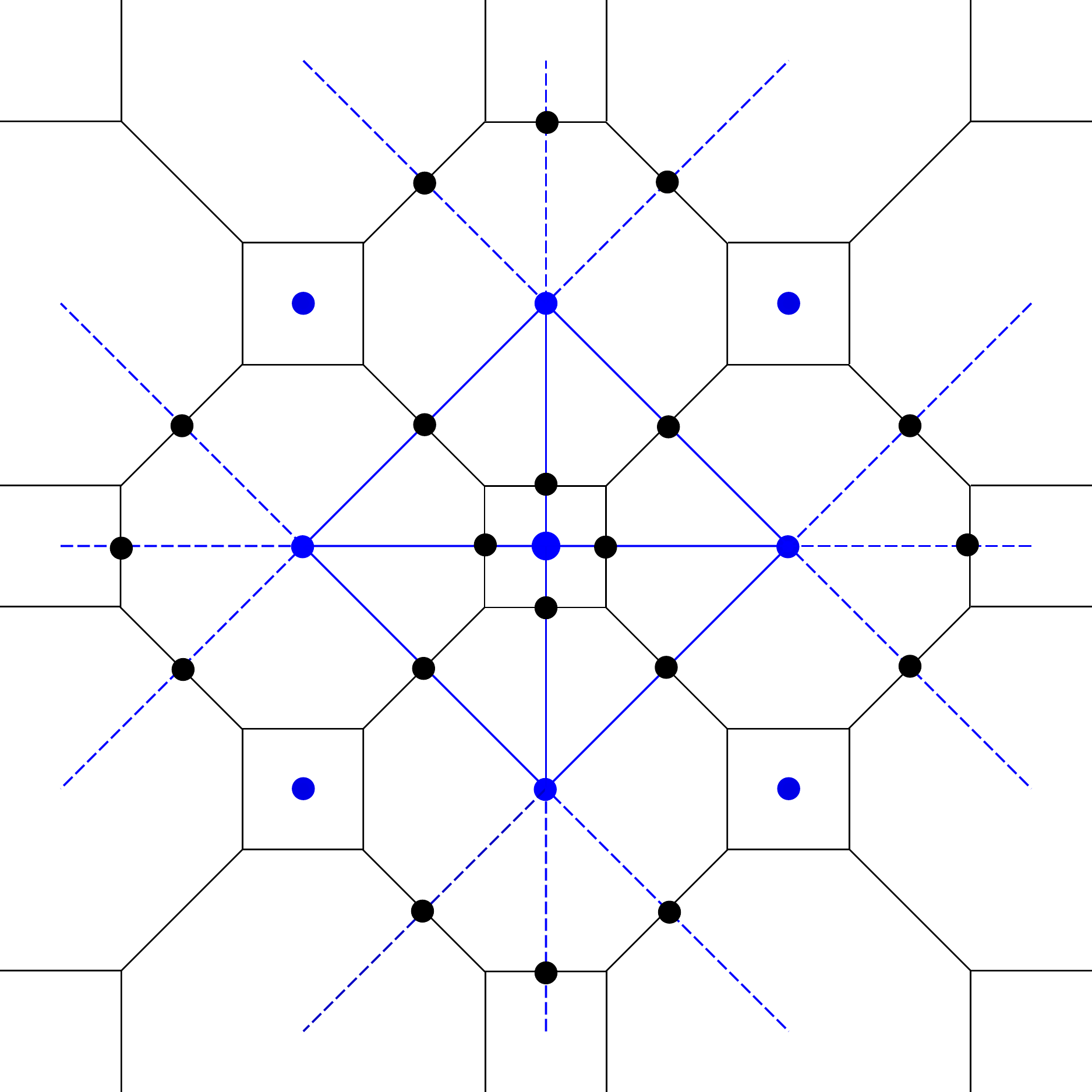}
	\caption{Complete tripartite dual graph of non-exposed twisted edges $K_{2,2,1}$.}
	\label{FigK221}
\end{subfigure}
\caption{Example \ref{ExM-2}.}
\label{FigM-2Div}
\end{figure}
\end{example}

\begin{proof}[Proof of Theorem \ref{ThM-2}]
Let $\{ \gamma_1 ,\ldots , \gamma_g \}$ be the set of primitive cycles of $C$.
Let $Q_{\Gamma_T}$ be the Laplacian mod 2 of the dual graph of non-exposed twisted edges $\Gamma_T$ with respect to the ordering in $\{ \gamma_1 ,\ldots , \gamma_g \}$.
Seeing the primitive cycles as subsets of bounded edges of $C$, let $D_T$ be the $g\times g$ diagonal matrix with $\Z_2$-coefficients 
\[ (D_T)_{ii} = | \gamma_i \cap T \cap \Exp (C) | \mod 2   . \]
Then the matrix $A_T$ is given by $A_T = Q_{\Gamma_T} + D_T$.
The non-singular real tropical curve $(C,\E )$ is a $(M-2)$ real tropical curve if and only if $\rank A_T = 2$ by \Cref{ThConnectHom+}.
This is equivalent to the following condition on the column space $\Col (A_T)$:
 \[ \Col (A_T) = \Z_2 \langle v_1 , v_2 \rangle \subset \Z_2^g , \] with $v_1,v_2$ linearly independent.
Since $T$ is a dividing set of twisted edges, by \Cref{ThDividing+} the diagonal coefficients of $A_T$ are zero.
Up to renumbering the primitive cycles, we can then assume that $(A_T)_{1,2} = (A_T)_{2,1} = 1$.
We can then set $v_1 := (A_T)_{\bullet , 1}$ and $v_2 := (A_T)_{\bullet , 2}$.
Again since the diagonal coefficients of $A_T$ are zero, for any $j\neq j'$ such that $(A_T)_{\bullet , j} = (A_T)_{\bullet , j'}$ as a vector of $\Z_2^g$, the primitive cycles $\gamma_j , \gamma_{j'}$ satisfy $\gamma_j \cap \gamma_{j'} \cap T = \emptyset$. 
Similarly, by the condition $(A_T)_{1,2} = (A_T)_{2,1} = 1$, for any $j\neq j'$ such that \[ (A_T)_{\bullet , j} \neq (A_T)_{\bullet , j'} , \quad (A_T)_{\bullet , j} \neq 0 \text{ and } (A_T)_{\bullet , j'} \neq 0 \] as vectors of $\Z_2^g$, the primitive cycles $\gamma_j$ and $\gamma_{j'}$ satisfy $\gamma_j \cap \gamma_{j'} \cap T \neq \emptyset$. 

We now have two possible cases:
\begin{itemize}
\item either no column of $A_T$ is equal to the vector $v_1 + v_2$, and then the minor $M$ of $A_T$ given by all the non-zero rows and columns is the adjacency matrix of a complete bipartite graph $K_{m,n}$;
\item or there exists a column of $A_T$ equal to $v_1+v_2$, and then the minor $M$ of $A_T$ given by all the non-zero rows and columns is the adjacency matrix of a complete tripartite graph $K_{l,m,n}$.
\end{itemize}
Adding the minor $M$ to the minor of $D_T$ given by the same rows and columns as $M$, we obtain the Laplacian mod 2 of the graph $K_{m,n}$ (respectively $K_{l,m,n}$) as a minor of $Q_{\Gamma_T}$, hence $K_{m,n}$ (respectively $K_{l,m,n}$) is a subgraph of $\Gamma_T$, and $\Gamma_T$ has only isolated vertices outside this subgraph.  
\end{proof}

\begin{corollary}
\label{CorM-2sDiv}
Let $(C,\E)$ be a non-singular real tropical curve with admissible set of twisted edges $T = T_1 \sqcup \ldots \sqcup T_s$, such that the dual graphs of non-exposed twisted edges $\Gamma_{T_i}$ are all disjoint and they all satisfy \Cref{ThM-2}.
Then $(C,\E )$ is a dividing $(M-2s)$ real tropical curve. 
\end{corollary}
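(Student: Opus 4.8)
The plan is to mimic the proof of \Cref{CorM-rNonDiv}, exploiting the block structure of the intersection matrix $A_T$ that is forced by the disjointness of the dual graphs $\Gamma_{T_i}$, with the single change that each block now has rank $2$ rather than rank $1$ and the outcome is dividing rather than non-dividing.

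First I would record that $T$ itself is dividing. Since $T = T_1 \sqcup \ldots \sqcup T_s$ is a disjoint union inside $\Edge^0(C)$, for every cycle $\gamma$ of $C$ we have $|\gamma \cap T| = \sum_{i=1}^s |\gamma \cap T_i| \mod 2$, and each summand vanishes because every $T_i$ satisfies the hypotheses of \Cref{ThM-2} and is in particular dividing. Hence $|\gamma \cap T| = 0 \mod 2$ for all cycles $\gamma$, so $(C,\E)$ is dividing by \Cref{ThDividing+}.

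Next I would set up the block decomposition. Writing $\{\gamma_1,\ldots,\gamma_g\}$ for the primitive cycles, the disjointness of the $T_i$ yields $A_T = \sum_{i=1}^s A_{T_i}$ over $\Z_2$, since $|\gamma_k \cap \gamma_l \cap T| = \sum_i |\gamma_k \cap \gamma_l \cap T_i| \mod 2$. Because $T$ is dividing, all diagonal entries of $A_T$, and hence of each $A_{T_i}$, vanish, so the nonzero rows and columns of $A_{T_i}$ are exactly the non-isolated vertices of $\Gamma_{T_i}$. The assumption that the $\Gamma_{T_i}$ are disjoint means these non-isolated vertex sets are pairwise disjoint; reordering the $\gamma_k$ so that the cycles active in $T_1$ come first, then those active in $T_2$, and so on, with the cycles isolated in every $\Gamma_{T_i}$ placed last, puts $A_T$ into block-diagonal form whose nonzero blocks are precisely the nonzero blocks of the individual $A_{T_i}$. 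Rank is then additive: $\rank A_T = \sum_{i=1}^s \rank A_{T_i}$. By \Cref{ThM-2} together with \Cref{ThConnectHom+}, each $T_i$ satisfying the complete bipartite or tripartite condition has $\rank A_{T_i} = 2$, so $\rank A_T = 2s$. Applying \Cref{ThConnectHom+} once more to $A_T$ shows that $(C,\E)$ is an $(M-2s)$ real tropical curve, and combined with the first step it is dividing.

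The main obstacle is the bookkeeping in the block step: one must verify that ``disjoint dual graphs'' genuinely forces disjoint supports of the matrices $A_{T_i}$. This hinges on the dividing hypothesis, which kills every diagonal entry so that the support of $A_{T_i}$ coincides exactly with the set of non-isolated vertices of $\Gamma_{T_i}$; without this, a cycle meeting $T_i$ only in exposed edges could contribute a diagonal term and spoil the block structure. Once this identification is secured, the remainder is the same block-rank argument as in \Cref{CorM-rNonDiv}, with the rank-$1$ blocks replaced by rank-$2$ blocks and the non-dividing conclusion replaced by a dividing one.
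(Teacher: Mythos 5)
Your proof is correct and follows essentially the same route as the paper: both reduce to the block-diagonal structure of $A_T$ forced by cycle-disjointness, use additivity of rank with each block of rank $2$ by \Cref{ThM-2} and \Cref{ThConnectHom+}, and deduce the dividing property from the vanishing of the relevant intersection parities via \Cref{ThDividing+}. Your extra care in checking that the vanishing diagonal (from each $T_i$ being dividing) is what makes the supports of the $A_{T_i}$ genuinely disjoint is a worthwhile elaboration of a step the paper leaves implicit, but it is not a different argument.
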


\begin{proof}
By assumption, the $T_i$'s are all cycle-disjoint, hence there exists an ordering of the primitive cycles of $C$ such that the matrix $A_T$ satisfying this ordering is bloc-diagonal, with each non-zero bloc being the non-zero bloc of the matrix $A_{T_i}$ satisfying the same ordering of primitive cycles.
Then \[ \rank A_T = \sum_{i=1}^s \rank A_{T_i} = 2s \] by \Cref{ThM-2}, hence $(C,\E)$ is a $(M-2s)$ real tropical curve by \Cref{ThConnectHom+}.
Since every bloc has zero diagonal, we obtain by \Cref{ThDividing+} that $(C,\E)$ is dividing. 
\end{proof}

\section{Ovals in the real part of a tropical curve}

\subsection{Partition of dividing configuration of twists}

We want to describe a set of generators of the $\Z_2$-vector space $\Div (C)$, in order to formulate relative topology criteria for dividing real tropical curves in the next sections.

\begin{definition}
\label{DefMulti+Circuit}
Let $C$ be a non-singular tropical curve in a non-singular projective tropical toric surface $\T \pr_\Sigma$.
A \emph{multi-bridge} $T$ on $C$ is a dividing set of twisted edges on $C$ with common direction modulo 2, such that $C\backslash T$ is disconnected and for any proper subset $T' \subsetneq T$, the graph $C\backslash T'$ is connected.
A \emph{circuit} $T$ on $C$ is a multi-bridge on $C$ such that every edge of $T$ is non-exposed.
\end{definition}

\begin{proposition}
\label{PropPartition}
Let $C$ be a non-singular tropical curve in a non-singular projective tropical toric surface $\T \pr_\Sigma$.
Then the set of all multi-bridges on $C$ spans the $\Z_2$-vector space $\Div (C)$. 
Moreover, if every primitive cycle of $C$ has at most two edges of each possible direction modulo 2, then the set of all multi-bridges form a basis of $\Div (C)$.
\end{proposition}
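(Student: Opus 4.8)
The plan is to reduce everything to a single direction modulo $2$ and then to invoke the standard fact that the $\Z_2$ cut space (cocycle space) of a graph is spanned by its bonds (minimal edge cuts).

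First I would record that $\Div(C)$ splits by direction. For $T\in\Div(C)$ and each of the three nonzero classes $d\in\Z_2^2$, let $T_d\subseteq T$ be the edges of direction $d$, and write $n_d(\gamma)=|\gamma\cap T_d|$. Twist-admissibility $\sum_{e\in\gamma\cap T}\overrightarrow{e}=\overrightarrow{0}$ reads $n_{(1,0)}+n_{(1,1)}\equiv 0$ and $n_{(0,1)}+n_{(1,1)}\equiv 0$, and the dividing condition (\Cref{ThDividing+}) reads $n_{(1,0)}+n_{(0,1)}+n_{(1,1)}\equiv 0$; a two-line computation in $\Z_2$ then forces $n_d(\gamma)\equiv 0$ for every cycle $\gamma$ and every $d$. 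Hence each $T_d$ is dividing, and being of common direction it is automatically twist-admissible, so $T_d\in\Div(C)$. This yields a direct sum $\Div(C)=\bigoplus_d\Div_d(C)$, where $\Div_d(C)$ consists of the dividing configurations using only edges of direction $d$; every multi-bridge lies in some $\Div_d(C)$, since by \Cref{DefMulti+Circuit} it has a well-defined direction.

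For the spanning statement it suffices to fix $d$. An element $F\in\Div_d(C)$ is a set of bounded $d$-edges meeting every cycle evenly, i.e.\ an element of the cut space of $C$ supported on bounded $d$-edges. If $F\neq\emptyset$ then $C\setminus F$ is disconnected, so $F$ contains a minimal disconnecting subset $B$; this $B$ is a bond, all of whose edges have direction $d$, hence a multi-bridge. Since the symmetric difference of two cuts is a cut, $F\triangle B=F\setminus B$ is again a $d$-supported cut with strictly fewer edges, and iterating writes $F$ as an edge-disjoint union of multi-bridges. Summing over $d$ shows the multi-bridges span $\Div(C)$.

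For the basis statement I would pass, for each $d$, to the graph $G_d$ obtained from $C$ by contracting all edges of direction $\neq d$ (loops and the pendant edges from unbounded $d$-edges play no role for bonds of bounded edges and are discarded). Under this contraction $\Div_d(C)$ is identified with the cut space of $G_d$ and the multi-bridges of direction $d$ with the bonds of $G_d$; moreover each primitive cycle of $C$ maps to a closed walk of length at most $2$ in $G_d$ by hypothesis. Thus the cycle space of $G_d$ is generated by digons (pairs of parallel edges), so the simple graph underlying $G_d$ has trivial cycle space, i.e.\ it is a forest. Its bonds are then exactly the parallel classes of edges, which are pairwise \emph{disjoint}; disjoint nonempty sets are linearly independent over $\Z_2$, so (together with the spanning already proved) the multi-bridges of direction $d$ form a basis of $\Div_d(C)$. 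Taking the union over the three directions, whose edge sets are disjoint, gives a basis of $\Div(C)$.

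The main obstacle is precisely this last step: turning the hypothesis ``at most two edges of each direction in every primitive cycle'' into the statement that $G_d$ is a forest with parallel edges. This requires verifying that contraction induces a surjection of $\Z_2$ cycle spaces, so that the digons coming from primitive cycles really generate the whole cycle space of $G_d$, and that a bond of $C$ supported on $d$-edges coincides with a bond of $G_d$ (minimality among $d$-supported cuts equals minimality among all cuts, since every subset of a $d$-supported cut is again $d$-supported). Once these identifications are in place, everything else is the standard bond decomposition of the cut space together with the bookkeeping in $\Z_2^2$.
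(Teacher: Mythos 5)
Your proof is correct, but it takes a genuinely different route from the paper's. You first split $\Div (C)$ as a direct sum over the three nonzero directions modulo $2$ --- the short $\Z_2$ computation showing that twist-admissibility together with the dividing condition forces $|\gamma \cap T_d| \equiv 0$ for each direction $d$ separately --- and then identify $\Div_d (C)$ with the part of the $\Z_2$ cut space of the graph $C$ supported on bounded $d$-edges, so that multi-bridges become exactly the monochromatic bonds and the spanning statement is the standard bond decomposition of a cut. The paper instead peels multi-bridges off a given $T \in \Div (C)$ by an explicit walk: starting from an edge on a primitive cycle, it repeatedly invokes the parity conditions (\Cref{EqAdm+} and \Cref{EqDiv+}) to find a partner edge of the same direction on each primitive cycle the chain enters, continuing through non-exposed edges until the chain closes up into a multi-bridge, which is then subtracted. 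For the basis statement the paper simply observes that under the hypothesis any two distinct multi-bridges are disjoint (disjoint nonempty sets being independent over $\Z_2$), whereas you derive the same disjointness from the structural fact that the contracted graph $G_d$ is a forest with parallel edges whose bonds are its parallel classes. Your route makes the paper's implicit direction-splitting explicit and replaces the ad hoc chain construction by standard cut-space theory, at the cost of the contraction compatibilities you flag at the end (all of which do hold); the paper's argument is more self-contained and elementary, but leaves the termination and minimality of its chain-following procedure largely to the reader.
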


\begin{proof}
Let $T \in \Div (C)$.
Assume $e_1$ is an edge of $T$ that does not belong to a primitive cycle of $C$, for $T$ seen as a subset of bounded edges of $C$.
The edge $e_1$ is then a multi-bridge, and the identification $\Edge^0 (C) \simeq \Z_2^{|\Edge^0 (C)|}$ sends $e_1$ to the configuration of twists $T_1 \in \Div (C)$.
Hence the sum $T+T_1$ is a configuration of twists $\widehat{T_1} \in \Div (C)$, since $\Div (C)$ is a $\Z_2$-space, such that $T_1$ and $\widehat{T_1}$ are disjoint seen as subsets of bounded edges of $C$.
Therefore, we can restrict to the case where all the edges of $T$ lie on primitive cycles of $C$.

Assume $e_2$ is an edge of $T$ lying on a primitive cycle $\gamma$ of $C$, for $T$ seen as a subset of bounded edges of $C$.
By \Cref{EqAdm+} and \Cref{EqDiv+}, there exists an edge $e_3$ distinct from $e_2$ with $\overrightarrow{e_2} = \overrightarrow{e_3} \mod 2$ in the set $\gamma \cap T$, for $\gamma$ and $T$ seen as subsets of bounded edges of $C$.
If both $e_2$ and $e_3$ are exposed, the set $\{ e_2 , e_3 \}$ is a multi-bridge on $C$, and for $T_2 := e_2 + e_3 \in \Div (C)$ the associated configuration of twists, the sum $T+T_2$ is a configuration of twists $\widehat{T_2} \in \Div (C)$ such that $T_2$ and $\widehat{T_2}$ are disjoint seen as subsets of bounded edges of $C$.
If at least one of the two edges $e_2 , e_3$ is non-exposed (say $e_3$), we use again \Cref{EqAdm+} and \Cref{EqDiv+} on the unique primitive cycle $\gamma '$ distinct from $\gamma$ containing $e_3$, and so on, until we cannot add a new edge in this manner (if $e_2$ is non-exposed, one has to add edges in the same way as for $e_3$).
The resulting subset of bounded edges $\{ e_2 , \ldots , e_k \}$ is a multi-bridge, and for $T_3 := \sum_{i=2}^k e_i\in \Div (C)$ the corresponding configuration of twists, the sum $T+T_3$ is a configuration of twists $\widehat{T_3} \in \Div (C)$ such that $T_3$ and $\widehat{T_3}$ are disjoint seen as subsets of bounded edges of $C$. 
Therefore, any configuration of twists $T \in \Div (C)$ is given as a sum of configuration of twists associated to multi-bridges on $C$.  
Moreover, if every primitive cycle of $C$ has at most two edges of each possible direction modulo 2, any two distinct multi-bridges on $C$ are disjoint, hence the configurations of twists associated to the multi-bridges on $C$ form a basis of $\Div (C)$. 
\end{proof}

\begin{corollary}
\label{CorPartitionCircuit}
Let $C$ be a non-singular tropical curve in a non-singular projective tropical toric surface $\T \pr_\Sigma$.
Then the set of all circuits on $C$ spans a $\Z_2$-vector subspace of $\Div (C)$.
\end{corollary}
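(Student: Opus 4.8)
The plan is to observe that, once each circuit is recognised as a genuine element of the $\Z_2$-vector space $\Div(C)$, the statement becomes formal. By \Cref{DefMulti+Circuit}, a circuit on $C$ is a multi-bridge all of whose edges are non-exposed, and a multi-bridge is by definition a \emph{dividing} set of twisted edges. Hence, under the identification $\Edge^0(C) \simeq \Z_2^{|\Edge^0(C)|}$, the configuration of twists associated to a circuit is twist-admissible and satisfies the dividing condition \Cref{EqDiv+}, so it lies in $\Div(C)$. The first step I would carry out is therefore simply to record this membership: the set of circuits on $C$ is a \emph{subset} of the $\Z_2$-vector space $\Div(C)$.

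The second step is to invoke the elementary fact that the $\Z_2$-linear span of any family of vectors inside a $\Z_2$-vector space is itself a $\Z_2$-vector subspace. Applying this to the family of configurations of twists associated to the circuits of $C$ immediately yields a subspace of $\Div(C)$, which is exactly the assertion of the corollary. To link this with the subspace $\Circuit(C)$ discussed informally earlier, I would additionally note that the span is contained in the set of dividing configurations whose twisted edges are all non-exposed: each circuit consists only of non-exposed edges, and the $\Z_2$-sum of two configurations of twists is their symmetric difference as edge sets, so the property ``all twisted edges non-exposed'' is preserved under addition. This confirms that $\Circuit(C)$ is a well-defined $\Z_2$-vector subspace of $\Div(C)$.

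In contrast to \Cref{PropPartition}, no ``peeling'' argument is required here, because the corollary only asserts that circuits span \emph{some} subspace, not that they exhaust every dividing configuration with non-exposed edges. Consequently there is essentially no obstacle: the entire content is the bookkeeping remark that a circuit, being a multi-bridge, automatically belongs to $\Div(C)$ by \Cref{DefMulti+Circuit}, after which the subspace property is purely a matter of linear algebra over $\Z_2$.
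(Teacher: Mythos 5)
Your proposal is correct and matches the paper's (implicit) reasoning: the paper gives no separate proof of this corollary, since once each circuit is recognised as a multi-bridge and hence an element of the $\Z_2$-vector space $\Div(C)$, the span of any such family is automatically a subspace. Your additional observation that the non-exposedness of all edges is preserved under symmetric difference is a correct and harmless bonus, though not needed for the statement as written.
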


\begin{definition}
Let $C$ be a non-singular tropical curve in $\T \pr_\Sigma$.
We denote by $\Circuit (C)$ the $\Z_2$-vector subspace of $\Div (C)$ spanned by the circuits on $C$.
\end{definition}

\subsection{Real tropical curves with only ovals}

\label{SectionAllOvals}

For $(C,\E)$ a non-singular real tropical curve in the non-singular tropical toric surface $\T \pr_\Sigma$, we say that a connected component $\R \rho_\E$ of the real part $\R C_\E$ is an \emph{oval} if $\R \rho_\E$ divides $\R \pr_\Sigma$ into two connected components.

\begin{lemma}
\label{Lem1valentOval}
Let $(C,\E)$ be a non-singular real tropical curve in a non-singular tropical projective toric surface $\T \pr_\Sigma$.
Let $\rho$ be a twisted cycle on $(C,\E )$ which does not contain any 1-valent vertex of $C$.
Then the real part $\R \rho_\E$ is an oval.
\end{lemma}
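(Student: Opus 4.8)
The plan is to show that $\R\rho_\E$ is confined to a single contractible chart of $\R\pr_\Sigma$ and then invoke the Jordan curve theorem. First I would record that if $\rho$ contains no $1$-valent vertex of $C$, then $\rho$ contains no unbounded edge: in a closed twisted walk the only way to traverse an unbounded edge is to run out to its $1$-valent endpoint on a boundary stratum of $\T\pr_\Sigma$ and turn around there, so an unbounded edge occurring in $\rho$ would force a $1$-valent vertex in $\rho$. Hence every edge $e$ of $\rho$ is bounded and lies in the interior $\R^2 = (\T^\times)^2 \subset \T\pr_\Sigma$, and likewise every vertex of $\rho$ is interior. Since $\R\rho_\E = \overline{\bigcup_{(e,\varepsilon)}\varepsilon(e)}$ is a finite union of closed segments $\varepsilon(e)$ together with their interior endpoints $\varepsilon(v)$, it is contained in the image of $\bigsqcup_{\varepsilon\in\mathcal{R}^2}\varepsilon(\R^2)$, i.e.\ in the open torus $(\R^\times)^2 \subset \R\pr_\Sigma$, and never meets the toric boundary.

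Next I would pin $\R\rho_\E$ to one quadrant. The identifications of \Cref{ThTropToric} only glue faces of positive codimension, so the four interior copies $\varepsilon(\R^2)$, for $\varepsilon\in\mathcal{R}^2$, descend to four pairwise disjoint open subsets of $\R\pr_\Sigma$, each homeomorphic to $\R^2$ and together forming $(\R^\times)^2$. By \Cref{PropBasisTwistedCycles}, $\R\rho_\E$ is a connected component of $\R C_\E$, hence connected; a connected subset of a disjoint union of open sets lies in one of them, so $\R\rho_\E$ is contained in a single chart $Q\cong\R^2$.

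Finally I would conclude by a Jordan curve argument. The proof of \Cref{PropBasisTwistedCycles} shows that every vertex of $\R\rho_\E$ lies on exactly two of its edges, so $\R\rho_\E$ is a closed connected $1$-manifold; as a component of the smooth curve $\R C_\E$ it is an embedded circle, now sitting inside $Q\cong\R^2$. By the Jordan curve theorem it bounds a closed disk $D\subset Q$, which is then an embedded $2$-disk in the closed connected surface $\R\pr_\Sigma$ with $\partial D=\R\rho_\E$. Therefore $\R\pr_\Sigma\setminus\R\rho_\E$ splits as the disjoint union of the open disk $\operatorname{int}(D)$ and the surface-with-boundary $\R\pr_\Sigma\setminus D$; both are nonempty and connected (the latter because deleting an embedded closed disk from a connected surface leaves it connected), so $\R\rho_\E$ divides $\R\pr_\Sigma$ into exactly two connected components and is an oval.

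The only delicate point is the first step, certifying that $\R\rho_\E$ never touches a boundary stratum. This is precisely what confines the component to one contractible chart and rules out the non-separating, pseudo-line-type behaviour, which occurs exactly when the twisted walk turns at a $1$-valent vertex and the component crosses between symmetric copies; once boundary-avoidance is established, the rest is soft topology.
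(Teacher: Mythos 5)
Your proposal is correct and follows essentially the same route as the paper's proof: confine $\R \rho_\E$ to a single orthant of $(\R^\times)^2 \subset \R \pr_\Sigma$ and then apply the Jordan curve theorem there. You simply make explicit two steps the paper leaves implicit (that the absence of $1$-valent vertices rules out unbounded edges, and that the complement of the bounded disk remains connected in all of $\R \pr_\Sigma$), which is a welcome but not substantively different elaboration.
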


\begin{proof}
Since $\rho$ does not contain any 1-valent vertex of $C$, the real part $\R \rho_\E$ of $\rho$ is strictly contained in an orthant of the real algebraic torus $(\R^\times )^2 \subset \R \pr_\Sigma$. Since $\R \rho_\E$ is homeomorphic to the circle $S^1$, it must then divide that orthant into two connected components.
Therefore $\R \rho_\E$ is an oval.
\end{proof}

By definition, the homology class $[\R \rho_\E ]$ of an oval is trivial in the homology group of the ambient space $H_1 (\R \pr_\Sigma ; \Z_2)$.
Then if all the connected components of the real part $\R C_\E$ of a non-singular real tropical curve $(C,\E)$ are ovals, we obtain that the homology class $[\R C_\E]$, given as the sum of homology class of the connected components, is trivial in $H_1 (\R \pr_\Sigma ; \Z_2)$. 

If we only assume that the class $[\R C_\E]$ is trivial in $H_1 (\R \pr_\Sigma ; \Z_2)$, we have the following equivalent condition on the Newton polygon of $C$.

\begin{proposition}
\label{PropTrivialClassEquiv}
Let $(C,\E )$ be a non-singular real tropical curve in $\T \pr_\Sigma$, and let $\Delta$ be the Newton polygon of $C$ dual to the fan $\Sigma$.
The homology class $[\R C_\E]$ is trivial in $H_1 (\R \pr_\Sigma ; \Z_2)$ if and only if every edge of $\Delta$ has even lattice length.
\end{proposition}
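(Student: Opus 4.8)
The plan is to use that $\R \pr_\Sigma$ is a closed surface, so that $\Z_2$-Poincar\'e duality makes the intersection pairing on $H_1(\R \pr_\Sigma ; \Z_2)$ nondegenerate. Hence $[\R C_\E]=0$ if and only if $[\R C_\E]\cdot\alpha=0$ for every $\alpha$ in a spanning set of $H_1(\R \pr_\Sigma ;\Z_2)$. As spanning set I would take the real parts of the toric boundary divisors: for each edge $\Gamma$ of $\Delta$ let $\R L_\Gamma \subset \R \pr_\Sigma$ be the circle obtained as the real part of the $1$-dimensional stratum dual to $\Gamma$. The proof then reduces to two points: (i) computing $[\R C_\E]\cdot[\R L_\Gamma]$, and (ii) showing the classes $[\R L_\Gamma]$ span $H_1(\R \pr_\Sigma ;\Z_2)$.

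For (i), I claim $[\R C_\E]\cdot[\R L_\Gamma]$ equals the lattice length $\ell_\Gamma$ of $\Gamma$ modulo $2$. Since $C$ is non-singular with Newton polygon $\Delta$, the edge $\Gamma$ is cut by $\Delta_C$ into $\ell_\Gamma$ primitive segments, each dual to exactly one unbounded edge $e$ of $C$ of weight $1$, and these are the only edges of $C$ meeting the stratum dual to $\Gamma$. Each such $e$ has direction $\overrightarrow{e}=\overrightarrow{n}_\Gamma$, the primitive normal of $\Gamma$ modulo $2$, so its affine space $\E_e=\{\varepsilon ,\varepsilon +\overrightarrow{n}_\Gamma\}$ consists of exactly the two symmetries whose copies $\varepsilon (e)$ and $(\varepsilon +\overrightarrow{n}_\Gamma)(e)$ sit in the two copies of $\Delta^*$ glued along $\Gamma$ in \Cref{ThTropToric}. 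Under that gluing the two arcs join at their common endpoint on $\Gamma$, producing a single arc of $\R C_\E$ that crosses $\R L_\Gamma$ transversally exactly once. Summing over the $\ell_\Gamma$ unbounded edges dual to $\Gamma$ gives $\ell_\Gamma$ transverse crossings, hence $[\R C_\E]\cdot[\R L_\Gamma]=\ell_\Gamma \bmod 2$. In particular an odd edge forces $[\R C_\E]\neq 0$.

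For (ii), I would use the cell structure on $\R \pr_\Sigma=\Delta^*/\!\sim$ from \Cref{ThTropToric}: the $2$-cells are the four closed orthant-copies $F_\varepsilon$ of $\Delta$; for each edge $\Gamma$ its four symmetric copies glue in pairs into two arcs, the $1$-cells; and at each vertex of $\Delta$ all four copies are identified (the orthogonality condition being vacuous at a $0$-face), giving the $0$-cells. The crucial point is that the $1$-skeleton is exactly the real toric boundary $Z=\bigcup_\Gamma \R L_\Gamma$, so cellular homology immediately shows $H_1(\R \pr_\Sigma ;\Z_2)$ is generated by the images of cycles of the graph $Z$. The cycle space of $Z$ is spanned by the circles $[\R L_\Gamma]$ together with one ``long'' loop $\ell$ traversing one arc of each edge once around the boundary polygon; but $\ell$ is precisely the cellular boundary of the positive orthant $F_{(0,0)}$, hence null-homologous. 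Therefore the $[\R L_\Gamma]$ alone generate $H_1(\R \pr_\Sigma ;\Z_2)$. Combining (i) and (ii): if every edge of $\Delta$ has even lattice length, then $[\R C_\E]$ pairs to zero with the spanning set, so nondegeneracy of the intersection form gives $[\R C_\E]=0$; conversely an odd edge yields a nonzero pairing, so $[\R C_\E]\neq 0$.

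The step I expect to need the most care is (ii): one must check that the identifications of \Cref{ThTropToric} produce precisely this CW structure, in particular that each edge $\Gamma$ contributes two arcs between its two endpoint fixed points and that the long loop $\ell$ is genuinely the boundary of a single orthant cell. A secondary subtlety in (i) is verifying that the crossings are transverse and that $\R C_\E$ meets $\R L_\Gamma$ only along the unbounded edges dual to $\Gamma$; both follow from the local patchworking picture near a $1$-dimensional stratum and from the weight-one, non-singularity hypothesis on $C$.
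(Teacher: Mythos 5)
Your proof is correct and follows essentially the same route as the paper: both reduce the triviality of $[\R C_\E]$ to the parity of its intersection with each real boundary circle $\R L_\Gamma$, identify those intersection points with the 1-valent vertices of $C$ on the corresponding stratum, and count these by the lattice length of $\Gamma$. The only difference is that the paper asserts the reduction step without comment, whereas you additionally justify it via Poincar\'e duality together with the fact that the classes $[\R L_\Gamma]$ generate $H_1(\R \pr_\Sigma ; \Z_2)$.
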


\begin{definition}
\label{DefStrictEvenDegree}
A non-singular real tropical curve $(C,\E )$ in $\T \pr_\Sigma$ is said to be of \emph{strict even degree} if every edge of the associated Newton polygon $\Delta$ has even lattice length (in particular, the non-singular real tropical curve $(C,\E )$ satisfies \Cref{PropTrivialClassEquiv}).
\end{definition}

\begin{proof}[Proof of \Cref{PropTrivialClassEquiv}]
The homology class $[\R C_\E]$ is trivial in $H_1 (\R \pr_\Sigma ; \Z_2)$ if and only if $\R C_\E$ intersects each 1-dimensional face of the stratification of $\R \pr_\Sigma$ an even number of times.
Now there is a one-to-one correspondence between the 1-valent vertices of $C$ on a 1-dimensional stratum of $\T \pr_\Sigma$ and the intersection points of $\R C_\E$ with the corresponding 1-dimensional stratum of $\R \pr_\Sigma$.
Therefore the homology class $[\R C_\E]$ is trivial in $H_1 (\R \pr_\Sigma ; \Z_2)$ if and only if the non-singular tropical curve $C$ intersect each 1-dimensional stratum of $\T \pr_\Sigma$ in an even number of points.
The latter condition can be translated into the desired Newton polygon condition.
\end{proof}

\begin{corollary}
\label{CorOnlyOvalsNecessary}
Let $(C,\E )$ be a non-singular real tropical curve in $\T \pr_\Sigma$.
If all the connected components of the real part $\R C_\E$ are ovals, then $(C,\E)$ is of strict even degree in $\T \pr_\Sigma$.
\end{corollary}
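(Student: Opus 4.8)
The plan is to deduce this directly from the material established just before \Cref{PropTrivialClassEquiv}. The essential input is that every oval has trivial homology class in $H_1(\R \pr_\Sigma ; \Z_2)$. Indeed, by definition an oval $\R \rho_\E$ divides $\R \pr_\Sigma$ into two connected components, so it is the common boundary of these two regions; being a boundary, its class $[\R \rho_\E]$ vanishes in $H_1(\R \pr_\Sigma ; \Z_2)$. This is exactly the observation recorded in the paragraph following \Cref{Lem1valentOval}.

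Next I would assemble the total class. Under the hypothesis that \emph{every} connected component of $\R C_\E$ is an oval, the fundamental class $[\R C_\E]$ decomposes as the sum, over all connected components, of their individual homology classes. Each of these summands is trivial by the preceding step, hence the total class $[\R C_\E]$ is trivial in $H_1(\R \pr_\Sigma ; \Z_2)$.

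Finally I would invoke \Cref{PropTrivialClassEquiv}, according to which triviality of $[\R C_\E]$ in $H_1(\R \pr_\Sigma ; \Z_2)$ is equivalent to every edge of the Newton polygon $\Delta$ having even lattice length. This last condition is precisely the definition of strict even degree (\Cref{DefStrictEvenDegree}), which concludes the argument.

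The corollary is therefore essentially a repackaging of \Cref{PropTrivialClassEquiv}, and I do not expect any genuine obstacle: the only point requiring care is the claim that an oval bounds, and this is immediate from the defining property of an oval as a dividing component. All the geometric content has already been front-loaded into \Cref{PropTrivialClassEquiv} and the surrounding discussion.
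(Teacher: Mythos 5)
Your proposal is correct and matches the paper's own reasoning: the paragraph preceding \Cref{PropTrivialClassEquiv} records exactly that each oval is nullhomologous and that $[\R C_\E]$ is the sum of the classes of the components, after which \Cref{PropTrivialClassEquiv} and \Cref{DefStrictEvenDegree} give the conclusion. No issues.
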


We want to study the sufficient conditions for $\R C_\E$ to contain only ovals, in two distinct directions.
The first idea comes from the well known fact that a non-singular real algebraic curve of even degree in $\pr^2$ has real part consisting only of ovals.

We denote by $\Delta_d$ the 2-dimensional lattice polytope given as the convex hull of the set $\{ (0,0) , (d,0) , (0,d) \} \subset \R^2_{\geq 0}$.
We will say that a lattice polytope $\Delta$ is a \emph{lattice transformation of} a lattice polytope $\Delta'$ if $\Delta$ is given as the image of $\Delta '$ via the composition of a lattice translation with a linear map in $\GL (2,\Z)$.

\begin{proposition}
\label{PropNewtonOval}
Let $(C,\E )$ be a non-singular real tropical curve of strict even degree in $\T \pr_\Sigma$.
Assume that the Newton polygon $\Delta$ of $C$, dual to the fan $\Sigma$, is a lattice transformation of some $\Delta_{2k}$, for $k\geq 1$.
Then all the connected components of the real part $\R C_{\E}$ are ovals.
\end{proposition}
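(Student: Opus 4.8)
The plan is to recast the statement homologically: identify $\R \pr_\Sigma$ with $\R \pr^2$, observe that each connected component of $\R C_\E$ is a simple closed curve carrying a class in $H_1(\R \pr^2 ; \Z_2)$, and then run the classical even-degree argument showing that no component can be a (one-sided) pseudo-line. Concretely, I would reformulate \emph{oval} as \emph{null-homologous} via the separation criterion for simple closed curves on a closed surface, so that the whole statement reduces to controlling the $\Z_2$-homology classes of the components.

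First I would fix the ambient surface. Since $\Delta$ is a lattice transformation of $\Delta_{2k}$, the composite of a lattice translation with an element of $\GL (2,\Z)$ carries $\Delta_{2k}$ onto $\Delta$; this induces an isomorphism of the associated toric surfaces $\pr_\Sigma \cong \pr^2$, hence a homeomorphism $\R \pr_\Sigma \cong \pr^2 (\R ) = \R \pr^2$ compatible with \Cref{ThTropToric}. Thus $H_1(\R \pr_\Sigma ; \Z_2) \cong H_1(\R \pr^2 ; \Z_2) = \Z_2$, with generator $\alpha$ the class of a projective line; by Poincar\'e duality the mod $2$ intersection pairing is non-degenerate, so $\alpha \cdot \alpha = 1$. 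By \Cref{PropBasisTwistedCycles}, every connected component of $\R C_\E$ is the real part $\R \rho_\E$ of a twisted cycle, and (as in the proof of that proposition and of \Cref{Lem1valentOval}) it is a simple closed curve, homeomorphic to $S^1$; hence its class $[\R \rho_\E]$ lies in $\{ 0 , \alpha \}$.

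Next I would rule out the class $\alpha$. Distinct components $\R \rho_\E$ and $\R \rho'_\E$ are disjoint embedded circles, so $[\R \rho_\E] \cdot [\R \rho'_\E] = 0$; were both equal to $\alpha$, this product would be $\alpha \cdot \alpha = 1$, a contradiction. Therefore at most one component can carry the class $\alpha$. On the other hand, $(C,\E )$ is of strict even degree, so by \Cref{PropTrivialClassEquiv} the total class $[\R C_\E] = \sum_\rho [\R \rho_\E]$ vanishes in $H_1(\R \pr^2 ; \Z_2)$; this forces the number of components of class $\alpha$ to be even, hence zero. Consequently every component is null-homologous.

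Finally, a simple closed curve on a closed surface separates it into two pieces if and only if its $\Z_2$-homology class vanishes; applying this to each null-homologous $\R \rho_\E \subset \R \pr^2$ shows that every component divides $\R \pr_\Sigma$ into two connected components, i.e. is an oval. The only genuinely delicate inputs are the identification of the patchworking space $\R \pr_\Sigma$ with $\R \pr^2$ under the lattice transformation (so that the homology group and intersection form are the expected ones) and the self-intersection $\alpha \cdot \alpha = 1$ of the generator, which is precisely the feature of the non-orientable surface $\R \pr^2$ that makes the even-degree hypothesis do its work; the separation criterion is then the routine step converting triviality of the class into the oval property.
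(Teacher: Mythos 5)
Your proof is correct and follows essentially the same route as the paper: reduce to $\R \pr^2$ via the lattice transformation, use $H_1(\R \pr^2;\Z_2)=\Z_2$ together with disjointness of components and $\alpha\cdot\alpha=1$ to see that at most one component can be non-null-homologous, and then invoke the triviality of the total class $[\R C_\E]$ (the strict even degree hypothesis, i.e.\ \Cref{PropTrivialClassEquiv}) to rule out even that one. The paper phrases this as ``at most one pseudo-line, as they would otherwise intersect''; you have merely made the intersection-pairing and separation arguments explicit.
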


\begin{proof}
Assume first that $C$ is of even degree in $\T \pr^2$, so that the Newton polygon of $C$ is the lattice polytope $\Delta_{2k}$ for some $k\geq 1$.
The homology group $H_1 (\R \pr^2 ; \Z_2)$ is isomorphic to $\Z_2$, thus the real part $\R C_\E$ contains at most one pseudo-line, as they would otherwise intersect.
By assumption on the Newton polygon, for each 1-dimensional stratum $\tau$ of $\T \pr^2$, we have $| C\cap \tau | = 0 \mod 2$.
Now there is a one-to-one correspondence between the 1-valent vertices of $C$ on a 1-dimensional stratum of $\T \pr_\Sigma$ and the intersection points of $\R C_\E$ with the corresponding 1-dimensional stratum of $\R \pr_\Sigma$.
Therefore the homology class $[\R C_\E ]$ is trivial in $H_1 (\R \pr^2 ;\Z_2 )$. Thus, the real part $\R C_\E$ does not contain a pseudo-line, hence every connected component of $\R C_\E$ must be an oval.

Assume now that $\Delta$ is a lattice transformation of some $\Delta_{2k}$.
The lattice transformation induces a homeomorphism between $(\R \pr_\Sigma , \R C_\E )$ and $(\R \pr^2 , \R C_{\E'}')$, where $C'$ is a non-singular tropical curve of even degree in $\T \pr^2$.
As we saw above, the connected components of $\R C_{\E'}'$ are all ovals.
Therefore all the connected components of $\R C_\E$ are ovals. 
\end{proof} 

The second idea we want to explore is to consider the configurations of twists preserving the ``all ovals" property no matter the ambient tropical toric surface.

\begin{proposition}
\label{PropTwistBoundOval}
Let $(C,\E )$ be a non-singular real tropical curve of strict even degree in $\T \pr_\Sigma$.
Assume that every edge in the set of twisted edges $T$ on $C$ is non-exposed. 
Then all the connected components of the real part $\R C_{\E}$ are ovals.
\end{proposition}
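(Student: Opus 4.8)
The plan is to invoke \Cref{PropBasisTwistedCycles} to identify each connected component of $\R C_\E$ with the real part $\R \rho_\E$ of a twisted cycle $\rho$, and then show that each such $\R\rho_\E$ is an oval. I will use throughout that a connected (hence simple closed) curve on the closed surface $\R\pr_\Sigma$ divides it into two components precisely when its class in $H_1(\R\pr_\Sigma;\Z_2)$ vanishes, so that being an oval is equivalent to being null-homologous. The strategy is to split the twisted cycles into those whose defining closed twisted walk stays among the bounded regions of $\T\pr_\Sigma\setminus C$ and those that run along the collar of unbounded regions, treating each type separately.

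The heart of the argument, and where the hypothesis on $T$ enters, is the following dichotomy. As in the proof of \Cref{PropBasisTwistedCycles}, regard the closed twisted walk as tracing boundaries of the faces of $\T\pr_\Sigma\setminus C$, passing from one face to the adjacent one exactly when it crosses a twisted edge or turns around at a $1$-valent vertex, and staying on the same face along every non-twisted edge. A turnaround crosses an unbounded edge, whose two adjacent faces are both unbounded regions; a crossing at an edge $e\in T$ crosses a bounded edge which, being non-exposed by hypothesis, has both adjacent faces bounded. Since every edge separating an unbounded region from a bounded one lies on the boundary of an unbounded region and is therefore exposed, hence never twisted, the walk can never move from an unbounded region to a bounded one. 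Consequently each twisted cycle is supported either entirely on boundaries of bounded regions, or entirely on the collar of unbounded regions.

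First I would handle the interior cycles: a twisted cycle supported on bounded regions meets no $1$-valent vertex of $C$, so by \Cref{Lem1valentOval} its real part is an oval, and in particular has trivial class in $H_1(\R\pr_\Sigma;\Z_2)$. For the collar I would observe that the unbounded regions are dual to the boundary lattice points of $\Delta$ and are cyclically ordered around $\partial\Delta$, with consecutive regions sharing a single unbounded edge; as the collar contains no twisted edge, the boundary march visits them all in this cyclic order, turning around once at each separating ray, so there is exactly one collar twisted cycle. Finally, since $(C,\E)$ is of strict even degree, \Cref{PropTrivialClassEquiv} gives $[\R C_\E]=0$ in $H_1(\R\pr_\Sigma;\Z_2)$; as this total class is the sum over all components and every interior component is null-homologous, the class of the unique collar cycle must also be $0$. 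Hence the collar cycle separates $\R\pr_\Sigma$ and is an oval, so all components of $\R C_\E$ are ovals.

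The step I expect to be the main obstacle is making the collar/interior dichotomy rigorous: one must check, directly from the definition of the closed twisted walk, that a crossing along a non-twisted edge never changes the traced face while turnarounds and twisted-edge crossings behave as claimed, and then that the collar is traced by a single twisted cycle. The remaining ingredients are a direct appeal to \Cref{Lem1valentOval} and \Cref{PropTrivialClassEquiv}, together with the standard fact that a $\Z_2$-null-homologous simple closed curve on a closed surface is separating.
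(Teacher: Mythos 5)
Your proposal is correct and follows essentially the same route as the paper: isolate the unique twisted cycle through the unbounded edges, apply \Cref{Lem1valentOval} to all other twisted cycles, and then use \Cref{PropTrivialClassEquiv} together with $[\R C_\E]=\sum[\R\rho_\E]$ in $H_1(\R\pr_\Sigma;\Z_2)$ to conclude that the remaining cycle is null-homologous, hence an oval. Your careful justification of the interior/collar dichotomy and of the uniqueness of the collar cycle supplies a detail the paper only asserts here (and proves separately later in \Cref{LemSpecialOval}).
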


\begin{proof}
By assumption, there exists a unique twisted cycle $\rho$ on $(C,\E )$ intersecting the 1-dimensional strata of $\T \pr_\Sigma$.
Since every edge of the Newton polygon $\Delta$ has even lattice length, the homology class $[\R C_\E]$ is trivial in $H_1 (\R \pr_\Sigma ; \Z_2 )$ by \Cref{PropTrivialClassEquiv}.
By \Cref{Lem1valentOval}, every twisted cycle in $(C,\E)$ distinct from $\rho$ has real part an oval.
We can write the homology class $[\R \rho_\E]$ as the sum of homology classes (with $\Z_2$-coefficients) $[\R C_\E] + \sum_{i=1}^r [\R (\rho_i)_\E ]$, with each $\rho_i$ being a twisted cycle on $(C,\E )$ distinct from $\rho$.
Since all the summands are trivial in $H_1 (\R \pr_\Sigma ; \Z_2)$, the class $[\R \rho_\E]$ is trivial in $H_1 (\R \pr_\Sigma ; \Z_2)$, hence $\R \rho_\E$ is an oval.  
\end{proof} 

\begin{example}
A non-singular real tropical curve $(C,\E )$ of strict even degree in $\T \pr_\Sigma$ has real part only ovals if its set of twisted edges $T$ is given as cycle-disjoint union of sets of twisted edges of the same form as in \Cref{FigK4}. 
\end{example}

\begin{corollary}
\label{CorAllOvalsCircuit}
Let $(C,\E)$ be a non-singular real tropical curve of strict even degree in $\T \pr_\Sigma$ with configuration of twists $T\in \Circuit (C)$.
Then all the connected components of the real part $\R C_{\E}$ are ovals.
\end{corollary}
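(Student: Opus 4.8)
The plan is to reduce the statement directly to \Cref{PropTwistBoundOval}, the only real content being to verify that any element of $\Circuit (C)$ has a support consisting solely of non-exposed edges. I would first recall that, by \Cref{DefMulti+Circuit}, a circuit is a multi-bridge all of whose edges are non-exposed, and that by definition $\Circuit (C)$ is the $\Z_2$-vector subspace of $\Div (C)$ spanned by the circuits on $C$. Hence I can write the configuration of twists as a finite sum $T = T_1 + \ldots + T_m$ in the $\Z_2$-vector space $\Edge^0 (C)$, where each $T_i$ is a circuit.

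Next I would use the $\Z_2$-vector space structure on $\Edge^0 (C) \simeq \Z_2^{|\Edge^0 (C)|}$, for which the canonical basis consists of single bounded edges, so that addition is the symmetric difference of the underlying sets of bounded edges. Consequently the support of $T$, namely the set of twisted edges of $(C,\E )$, is contained in the union $\bigcup_{i=1}^m T_i$. Since every edge of each circuit $T_i$ is non-exposed, it follows that every twisted edge of $(C,\E )$ is non-exposed.

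Finally, since $(C,\E )$ is of strict even degree in $\T \pr_\Sigma$ by hypothesis, and every edge of its set of twisted edges $T$ is non-exposed, the hypotheses of \Cref{PropTwistBoundOval} are satisfied, and I conclude that all connected components of $\R C_\E$ are ovals.

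There is no genuine obstacle here: the single point to check is that the non-exposed property is stable under the $\Z_2$-sum of configurations of twists, which is immediate from the symmetric-difference description, as a symmetric difference of sets of non-exposed edges is again a set of non-exposed edges.
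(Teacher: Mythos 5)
Your proof is correct and is exactly the argument the paper intends: the corollary is stated without proof as an immediate consequence of \Cref{PropTwistBoundOval}, the only point to check being that every element of $\Circuit (C)$, as a $\Z_2$-sum (symmetric difference) of circuits, has support contained in the union of the circuits' supports and hence consists only of non-exposed edges. Your verification of that point is sound.
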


\section{Even and odd ovals}
\label{SectionEvenOddOval}
Let $(C,\E)$ be a non-singular real tropical curve in $\T \pr_\Sigma$ with real part consisting only of ovals.
Let $\R \rho_\E$ be such an oval.
We say that $\R \rho_\E$ is \emph{even} if $\R \rho_\E$ is contained in an even number of discs in $\R \pr_\Sigma$ bounded by another oval of $\R C_\E$, and otherwise we say that $\R \rho_\E$ is \emph{odd}.

\subsection{Counting in terms of distribution of signs}

We want to count the number of even and odd ovals in $\R C_\E$ based on a distribution of signs $\delta$ associated to $\E$ on the dual subdivision $\Delta_C$ of $C$.
Recall that a non-singular tropical curve $C$ is of \emph{strict even degree} in $\T \pr_\Sigma$ if all the edges of the Newton polygon $\Delta$ of $C$ (dual to the fan $\Sigma$) are of even lattice length.

\begin{lemma}
\label{LemSignParity}
Let $C$ be a non-singular tropical curve of strict even degree in $\T \pr_\Sigma$, and let $\delta$ be a distribution of signs on $\Delta_C$.
Assume that the connected components of $\R C_\delta$ are all ovals.
Any two connected components $v^\vee ,(v')^\vee \subset (\T \pr_\Sigma \backslash C)^*$ are contained in the same number modulo 2 of ovals of $\R C_\delta$ if and only if the integer points $v,v' \in \Delta_C^* \cap \Z^2$ dual to $v^\vee ,(v')^\vee$ satisfy $\delta (v) = \delta (v ')$.
\end{lemma}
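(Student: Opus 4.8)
The plan is to use that, by the combinatorial construction of the real part, $\R C_\delta$ is precisely the \emph{sign boundary} in $\R\pr_\Sigma$. Indeed, for an edge $e$ of $C$ with dual edge $e^\vee$ joining the integer points $v_1^e,v_2^e$ (which are exactly the duals of the two connected components of $\T\pr_\Sigma\backslash C$ bordering $e$), the copy $\varepsilon(e)$ belongs to $\R e_\delta$ if and only if $\varepsilon(e^\vee)$ is non-empty, that is $\delta(\varepsilon(v_1^e))\neq\delta(\varepsilon(v_2^e))$. Thus an edge of $\R C_\delta$ is always the common boundary of two complementary regions carrying opposite signs, and conversely two adjacent regions of opposite sign are separated by an edge of $\R C_\delta$. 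In particular, crossing $\R C_\delta$ transversally at one point exchanges a region of sign $+1$ for one of sign $-1$.

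First I would record two consequences for a path in general position. Because every component of $\R C_\delta$ is an oval, each separates the connected surface $\R\pr_\Sigma$ into two parts, so that for every region $R$ of $\R\pr_\Sigma\backslash \R C_\delta$ the number $\nu(R)$ of ovals whose inside contains $R$ is finite and well defined. If a path $\sigma$ meets $\R C_\delta$ transversally in $m$ points and avoids the vertices of $\R C_\delta$, then at each of these points $\sigma$ crosses exactly one oval, so the inside/outside status of that single oval is switched while that of every other oval is unchanged; hence $\nu$ changes parity exactly $m$ times along $\sigma$. Simultaneously, by the sign-boundary description, the sign of the ambient region also flips at each of the $m$ crossings.

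Then I would conclude as follows. Given $v^\vee$ and $(v')^\vee$, connectedness of $\R\pr_\Sigma$ lets me join a point of the image of $v^\vee$ to a point of the image of $(v')^\vee$ by a generic path $\sigma$ as above, with $m$ transversal crossings of $\R C_\delta$. The two observations then give $\delta(v')=(-1)^m\delta(v)$ and $\nu((v')^\vee)\equiv \nu(v^\vee)+m\pmod 2$. Hence both statements ``$\delta(v)=\delta(v')$'' and ``$\nu(v^\vee)\equiv\nu((v')^\vee)\pmod 2$'' are equivalent to $m$ being even, and therefore equivalent to each other, which is exactly the assertion of the lemma.

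The point requiring the most care is the bookkeeping around the quotient $\R\pr_\Sigma$: one must verify that the sign attached to each complementary region is well defined after the identifications of \Cref{ThTropToric}, which is guaranteed precisely by the extension rule $\delta(\varepsilon(v))=(-1)^{\varepsilon_1 v_1+\varepsilon_2 v_2}\delta(v)$ being compatible with those gluings, and that a transversal path meeting $\R C_\delta$ only in its relative interiors (and crossing the toric boundary strata only away from $\R C_\delta$) can indeed be chosen, so that the crossing count $m$ is unambiguous. Once this topological set-up is in place, the argument above is entirely formal.
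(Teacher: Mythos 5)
Your proof is correct, and its core mechanism coincides with the paper's: join the two regions by a generic path and observe that the sign of the ambient region and the parity of the number of containing ovals both flip exactly at the $m$ transversal crossings with the curve, so each of the two conditions in the lemma is equivalent to $m$ being even. The difference lies in how the sign function on the complement is realized. The paper passes to an approximating algebraic curve $\mathcal{C}_t$ via \Cref{ThViro2} and tracks the sign of a defining polynomial $f_t$ along the path, whereas you read the sign discontinuity directly off the combinatorial definition of $\R C_\delta$ (the copy $\varepsilon(e)$ is present exactly when $\delta(\varepsilon(v_1^e))\neq\delta(\varepsilon(v_2^e))$). This buys you a self-contained argument at the level of the patchwork: you need neither the existence of the algebraic family nor the implicit claim that $f_t$ has a well-defined sign on the regions of $\pr_\Sigma(\R)$, which is really an orientability statement for the associated real line bundle. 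Two small points deserve mention. First, the comparison $\nu(v^\vee)+\nu((v')^\vee)\equiv m\pmod 2$ is independent of which side of each oval is declared its inside, so no convention needs to be fixed there; it is worth saying this since the lemma only asserts an equality of parities. Second, the compatibility issue you flag at the end is genuine and is shared (silently) by the paper's proof: the sign of a complementary region is constant across the identifications of \Cref{ThTropToric} along the toric divisors only if $\varepsilon''\cdot v$ is even for every lattice point $v$ of a boundary edge with normal direction $\varepsilon''$, which holds once $\Delta$ is translated so that its vertices have even coordinates --- always possible under the strict even degree hypothesis, since all vertices of such a polygon are congruent modulo $2$, and consistent with the normalizations used elsewhere in the paper. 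With that normalization fixed, your argument is complete.
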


\begin{proof}
Let $(\mathcal{C}_t)$ be a family of non-singular real algebraic curves converging to the tropical limit $(C,\delta )$.
For $t \in \R_{>0}$ small enough, the curve $\mathcal{C}_t$ is a non-singular real algebraic curve such that the pair $(\pr_\Sigma (\R) , \mathcal{C}_t (\R ))$ is homeomorphic to $(\R \pr_\Sigma, \R C_\delta )$ by \Cref{ThViro2}.
In particular, we have a one-to-one correspondence between the connected components of $\pr_\Sigma (\R) \backslash \mathcal{C}_t (\R )$ and the connected components of $\R \pr_\Sigma \backslash \R C_\delta $.
Let $f_t$ be a defining polynomial of $\mathcal{C}_t$, such that the signs of the monomials of $f_t$ induces the distribution of signs $\delta$ on $\Delta_C \cap \Z^2$.
Let $D_t$ be a connected component of $\pr_\Sigma (\R) \backslash \mathcal{C}_t (\R )$ and $v^\vee$ the corresponding connected component of $\R \pr_\Sigma \backslash \R C_\delta $.
Let $v \in \Delta_C^* \cap \Z^2$ be the integer point dual to $v^\vee$.
By definition of $\delta$, we have $f_t |_{D_t} > 0$ if and only if $\delta (v) > 0$, and similarly $f_t |_{D_t} < 0$ if and only if $\delta (v) < 0$.
The value of $f_t (x)$ along a path in $\pr_\Sigma (\R)$ intersecting $\mathcal{C}_t (\R )$ transversely changes of sign at each intersection of the path with an oval.
Therefore, any two connected components $v^\vee ,(v')^\vee \subset (\T \pr_\Sigma \backslash C)^*$ are contained in the same number of ovals modulo 2 of $\R C_\delta$ if and only if the dual integer points $v,v' \in \Delta_C^* \cap \Z^2$ satisfy $\delta (v) = \delta (v ')$.
\end{proof}

\begin{corollary}
\label{CorCompDistrib}
Let $C$ be a non-singular tropical curve of strict even degree in $\T \pr_\Sigma$, and let $\delta$ and $\delta '$ be distributions of signs on $\Delta_C$ such that all the connected components of the real parts $\R C_\delta$ and $\R C_{\delta '}$ are ovals.
Assume that there exists a connected component $v_0^\vee$ of $(\T \pr^2 \backslash C)^*$ contained in the same number modulo 2 of ovals in $\R C_\delta$ and $\R C_{\delta '}$.
Assume that $\delta (v_0) = \delta ' (v_0)$, with $v_0 \in \Delta_C^* \cap \Z^2$ the integer point dual to $v_0^\vee$.
Then a connected component $v^\vee$ in $(\T \pr^2 \backslash C)^*$ is contained in the same number modulo 2 of ovals of $\R C_\delta$ and $\R C_{\delta '}$ if and only if $\delta (v) = \delta ' (v)$, with $v \in \Delta_C^* \cap \Z^2$ the integer point dual to $v^\vee$.
\end{corollary}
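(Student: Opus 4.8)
The plan is to leverage \Cref{LemSignParity} to convert each of the two oval-counting functions into the corresponding sign distribution up to a single global sign, and then to compare these two global signs using the normalization provided at $v_0$.

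First I would fix notation: for a distribution of signs $\delta$ and a connected component $v^\vee$ of $(\T\pr_\Sigma\backslash C)^*$, let $n_\delta(v)\in\Z_2$ denote, modulo $2$, the number of ovals of $\R C_\delta$ containing $v^\vee$. \Cref{LemSignParity} says precisely that $n_\delta(v)=n_\delta(v')$ if and only if $\delta(v)=\delta(v')$. I would then rephrase this multiplicatively: the quantity $c_\delta:=(-1)^{n_\delta(v)}\,\delta(v)\in\{+1,-1\}$ does not depend on the chosen component $v^\vee$. Indeed, for any two components $v^\vee,(v')^\vee$ the lemma gives $(-1)^{n_\delta(v)}=(-1)^{n_\delta(v')}$ exactly when $\delta(v)=\delta(v')$, so in either case $(-1)^{n_\delta(v)}\,\delta(v)=(-1)^{n_\delta(v')}\,\delta(v')$; hence $c_\delta$ is a well-defined constant attached to $\delta$. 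In particular $(-1)^{n_\delta(v)}=c_\delta\,\delta(v)$ for every $v$, and likewise $(-1)^{n_{\delta'}(v)}=c_{\delta'}\,\delta'(v)$ for $\delta'$.

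With this in hand, the assertion that $v^\vee$ lies in the same number modulo $2$ of ovals of $\R C_\delta$ and $\R C_{\delta'}$ reads $(-1)^{n_\delta(v)}=(-1)^{n_{\delta'}(v)}$, i.e. $c_\delta\,\delta(v)=c_{\delta'}\,\delta'(v)$; multiplying by the sign $\delta(v)$, this is equivalent to $\delta(v)\,\delta'(v)=c_\delta\,c_{\delta'}$. So the whole problem reduces to identifying the constant $c_\delta\,c_{\delta'}$. Here I would invoke the hypothesis on $v_0$: applying the previous equivalence to the component $v_0^\vee$, which by assumption lies in the same number modulo $2$ of ovals of both curves, gives $\delta(v_0)\,\delta'(v_0)=c_\delta\,c_{\delta'}$; but $\delta(v_0)=\delta'(v_0)$ forces $\delta(v_0)\,\delta'(v_0)=+1$, whence $c_\delta\,c_{\delta'}=+1$. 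Substituting back, $v^\vee$ lies in the same number modulo $2$ of ovals of both curves if and only if $\delta(v)\,\delta'(v)=+1$, that is, $\delta(v)=\delta'(v)$, which is the desired conclusion.

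I expect the only genuine subtlety to be conceptual rather than computational: \Cref{LemSignParity} is purely \emph{relative}, comparing two components for a single fixed distribution, and yields no absolute count. Consequently $\R C_\delta$ and $\R C_{\delta'}$ can a priori differ by the global sign ambiguity encoded in $c_\delta$ and $c_{\delta'}$, and the role of the base point $v_0$ is exactly to normalize this ambiguity to $c_\delta\,c_{\delta'}=+1$. The main point to verify carefully is therefore the well-definedness of $c_\delta$, namely its independence of the chosen component, which follows at once from the biconditional in \Cref{LemSignParity} together with the fact that every quantity involved is a sign in $\{+1,-1\}$.
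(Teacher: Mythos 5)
Your argument is correct and is essentially the paper's own proof in multiplicative dress: both rest entirely on \Cref{LemSignParity} to determine the oval-count parity up to a global constant and then use the base point $v_0^\vee$ to normalize, the paper doing this by comparing each parity to the fixed value $s$ at $v_0^\vee$ while you package the same information into the well-defined constants $c_\delta, c_{\delta'}$ and show $c_\delta c_{\delta'}=+1$. No gap; the well-definedness check you flag is exactly the biconditional of \Cref{LemSignParity} and goes through as you describe.
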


\begin{proof}
Let $s$ be the number modulo 2 of ovals of $\R C_\delta$ containing $v_0^\vee$.
By assumption, the number $s$ is also the number modulo 2 of ovals of $\R C_{\delta '}$ containing $v_0^\vee$.
By \Cref{LemSignParity} a connected component $v^\vee$ is contained in $s$ ovals modulo 2 in $\R C_\delta$ if and only if $\delta (v) = \delta (v_0)$.
Again by \Cref{LemSignParity}, a connected component $v^\vee$ is contained in $s$ ovals modulo 2 in $\R C_{\delta '}$ if and only if $\delta '(v) = \delta ' (v_0)$.
Therefore a connected component $v^\vee$ is contained in the same number modulo 2 of ovals of $\R C_\delta$ and $\R C_{\delta '}$ if and only if $\delta (v) = \delta ' (v)$.
\end{proof}

In order to use \Cref{LemSignParity} and \Cref{CorCompDistrib}, we show that there exists a distinguished twisted cycle in $(C,\E)$ with real part an even oval, called the \emph{special oval} in reference to \cite{haas1997real}. 

\begin{lemma}
\label{LemSpecialOval}
Let $(C,\E)$ be a non-singular real tropical curve of strict even degree in $\T \pr_\Sigma$ such that all the edges of the set of twisted edges $T$ on $C$ are non-exposed.
Then there exists a unique twisted cycle $\rho_0$ on $(C,\E)$ containing some unbounded edges of $C$ and such that the real part $\R (\rho_0)_\E$ is an oval of $\R C_\E$ not contained in any other oval.
\end{lemma}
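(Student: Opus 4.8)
The plan is to identify $\rho_0$ as the \emph{outer contour} of $C$ and to establish the three assertions---existence, uniqueness, and the outermost property---in turn, the non-exposedness hypothesis entering decisively in the existence step. First I would record the two facts that drive everything. Every unbounded edge of $C$ lies on the boundary of an unbounded connected component of $\R^2 \backslash C$, hence is exposed, and therefore \emph{non-twisted} by hypothesis. Moreover, from the proof of \Cref{PropTrivialClassEquiv} a twisted cycle $\rho$ contains an unbounded edge precisely when $\R \rho_\E$ meets the $1$-dimensional strata of $\R \pr_\Sigma$ (via the correspondence between $1$-valent vertices and intersection points with strata), and from the proof of \Cref{Lem1valentOval} a twisted cycle containing no $1$-valent vertex has real part strictly inside a single orthant of $(\R^\times )^2$.

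For existence I would construct $\rho_0$ as one closed twisted walk running along the boundary of $C$. Listing the unbounded edges $e_1,\ldots ,e_N$ in the cyclic order of their dual lattice points along $\partial \Delta$, consecutive unbounded edges $e_i, e_{i+1}$ are joined along the outer contour by edges bounding the intervening unbounded region, which are therefore exposed and non-twisted. Thus a twisted walk that enters $e_i$, turns around at its $1$-valent vertex (using both elements of $\E_{e_i}$), and then proceeds along these non-twisted connecting edges without switching sides reaches $e_{i+1}$ and repeats; after sweeping once around $\partial \Delta$ it returns to its starting datum. This yields a single twisted cycle $\rho_0$ containing all the unbounded edges of $C$.

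Uniqueness is then formal: the $(\text{edge},\varepsilon )$ incidences partition into twisted cycles, and at each turn-around the edge $e_i$ contributes to $\rho_0$ with \emph{both} phases of $\E_{e_i}$; hence no twisted cycle other than $\rho_0$ can contain an unbounded edge. By \Cref{PropTwistBoundOval} every connected component of $\R C_\E$ is an oval, so in particular $\R (\rho_0)_\E$ is an oval. Finally, for the outermost property, let $\R \rho_\E$ be any oval with $\rho \neq \rho_0$. By uniqueness $\rho$ contains no unbounded edge, hence no $1$-valent vertex, so by the recollection above $\R \rho_\E$ lies strictly inside a single orthant of $(\R^\times )^2$, and the disc it bounds lies in that orthant. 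Since $\R (\rho_0)_\E$ meets the $1$-dimensional strata, it cannot lie in that disc; as this holds for every $\rho \neq \rho_0$, the oval $\R (\rho_0)_\E$ is contained in no other oval.

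The hard part will be making the outer-contour walk in the existence step fully rigorous: one must verify, through the duality between $\partial \Delta$ and the boundary of $C$, that the edges joining consecutive unbounded edges are genuinely exposed, and that the phase bookkeeping at each turn-around and at each intermediate $3$-valent vertex keeps the walk on the outer side, so that it closes up into one cycle rather than several. The non-exposedness of $T$ is precisely what forbids the walk from turning inward before it has traversed the entire boundary.
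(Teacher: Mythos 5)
Your proposal is correct and follows essentially the same route as the paper: the special cycle is obtained as the boundary walk, which avoids all twisted edges because these are non-exposed, hence traverses every unbounded edge (with both phases, giving uniqueness), while every other twisted cycle has real part confined to a single orthant and therefore cannot contain the oval $\R (\rho_0)_\E$. Your write-up is merely more explicit about the outer-contour bookkeeping that the paper leaves implicit.
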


\begin{definition}
Let $(C,\E)$ be a non-singular real tropical curve satisfying the assumptions of \Cref{LemSpecialOval}.
The unique twisted cycle $\rho_0$ in $(C,\E)$ satisfying the conditions of \Cref{LemSpecialOval} is called the \emph{special twisted cycle}, and the real part $\R (\rho_0)_\E$ is called the \emph{special oval}.  
\end{definition}

Note that we showed that the real part of the special twisted cycle is an oval in the proof of \Cref{PropTwistBoundOval}.

\begin{proof}[Proof of \Cref{LemSpecialOval}]
Since all the edges of $T$ are non-exposed, a closed twisted walk starting at a one-valent vertex $v$ of $C$ does not meet any twisted edge.
In particular, such a closed twisted walk goes only through exposed edges, and goes through every unbounded edge of $C$ twice.
Hence there exists a unique twisted cycle $\rho_0$ on $(C,\E )$ going through all the unbounded edges of $C$.
In that case, all ovals distinct from $\R (\rho_0)_\E$ are strictly contained in an orthant of the real algebraic torus $(\R^\times )^2 \subset \R \pr_\Sigma$ while the oval $\R (\rho_0)_\E$ intersects several orthants.
Hence the real part $\R (\rho_0)_\E$ cannot be contained in any other oval of $\R C_\E$, therefore $\rho_0$ is the unique special twisted cycle of $(C,\E)$.
\end{proof}

%
%

\subsection{Even circuits}

We consider the difference of number of even and odd ovals between two distinct real parts of a common non-singular real tropical curve, so that the corresponding sets of twisted edges differ by an \emph{even} circuit.

\begin{definition}
Let $ v = (v_1,v_2) \in \Delta_C \cap \Z^2$ be an integer point of the dual subdivision of a non-singular tropical curve $C$.
We say that $v$ is \emph{even} if both $v_1$ and $v_2$ are even, and we say that $v$ is \emph{odd} otherwise. 
A multi-bridge $B$ on $C$ is said to be \emph{even} if the dual set $B^\vee \subset \Delta_C$ contains an even integer point, and otherwise we say that $B$ is \emph{even-free}.
\end{definition}

Recall that $\Circuit (C)$ denotes the $\Z_2$-vector subspace of $\Div (C)$ such that a configuration of twists in $\Circuit (C)$ is given as a sum of circuits in $C$.
We denote by $\EvCirc (C) \subset \Circuit (C)$ the $\Z_2$-vector subspace spanned by even circuits. 
From now on, for $(C,\E )$ a non-singular real tropical curve, we denote by $p_\E$ and $n_\E$ the number of even and odd ovals in $\R C_\E$.

\begin{proposition}
\label{PropEvenTwists}
Let $(C,\E )$ be a non-singular real tropical curve of strict even degree in $\T \pr_\Sigma$.
Assume that the set of twisted edges $T$ induced by $\E$ belongs to $\Circuit (C)$.
Let $T' \in \EvCirc (C)$ such that $T \cap T' = \emptyset$, and let $\E '$ be a real phase structure on $C$ inducing the configuration of twists $T+T' \in \Circuit (C)$.
Then we have $p_{\E '} \leq p_\E$ and $n_{\E '} \leq n_\E $.
\end{proposition}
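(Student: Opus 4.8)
The plan is to recast both real parts in terms of distributions of signs and compare them region by region. Since $T$ and $T+T'$ both lie in $\Circuit(C)$, all of their twisted edges are non-exposed, so by \Cref{CorAllOvalsCircuit} both $\R C_\E$ and $\R C_{\E'}$ consist only of ovals and the four quantities $p_\E,n_\E,p_{\E'},n_{\E'}$ are defined; moreover each carries a special oval (\Cref{LemSpecialOval}), whose outer complement region $v_{\mathrm{out}}^\vee$ is dual to the \emph{same} integer point $v_{\mathrm{out}}$ in both cases, since the unbounded edges of $C$ are untouched by an interior configuration of twists. Because $\EvCirc(C)$ is spanned by even circuits, it suffices to treat the case where $T'$ is a single even circuit $B$, choosing a $\Z_2$-decomposition of $T'$ so that the partial sums remain dividing and arguing by induction; this reduction is routine bookkeeping.

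For a single even circuit $B$ I would fix distributions of signs $\delta,\delta'$ representing $\E,\E'$ and pin down how they differ. The dual edges $B^\vee\subset\Delta_C$ are parallel modulo $2$, and, $B$ being a multi-bridge, they separate the Newton polygon $\Delta$ into two zones $Z_+$ (the one meeting $\partial\Delta$, hence containing $v_{\mathrm{out}}$) and $Z_-$. Applying \Cref{PropTwistSign+} edge by edge, I would check that one may take $\delta'$ to agree with $\delta$ on $Z_+$ and on the cut $B^\vee$, and to differ from $\delta$ on the interior of $Z_-$ by an explicit alternating factor determined by the common direction modulo $2$ of the edges of $B$ (constant along $B^\vee$, hence trivial on the cut). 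The hypothesis that $B$ is \emph{even} provides an integer point $w\in B^\vee$ with both coordinates even; since $w$ lies on the cut, $\delta'(w)=\delta(w)$, and $w$ serves as a fixed reference pinning the parities.

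I would then invoke \Cref{LemSignParity} and \Cref{CorCompDistrib} with $v_{\mathrm{out}}$ as the common reference region (depth $0$ in both curves, and $\delta(v_{\mathrm{out}})=\delta'(v_{\mathrm{out}})$ as $v_{\mathrm{out}}\in Z_+$). These show that a complement region $v^\vee$ is enclosed by the same parity of ovals in $\R C_{\E'}$ as in $\R C_\E$ exactly when $\delta(v)=\delta'(v)$, i.e.\ for $v\in Z_+$ together with the part of $Z_-$ where the alternating factor is trivial; the regions whose depth parity flips are precisely those interior to $Z_-$ where it is $-1$. Using the bijection between ovals and the complement regions they immediately enclose (each oval $\rho$ matched with the region just inside it, whose depth exceeds that of $\rho$ by one), one has $p_\E=\#\{\text{enclosed regions of odd depth}\}$ and $n_\E=\#\{\text{enclosed regions of even depth}\}$, and similarly for $\E'$. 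The problem thus becomes: show that the combined effect of these parity flips in $Z_-$ and of the change in how the curve separates regions along $B^\vee$ can only lower, never raise, both counts.

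The main obstacle is exactly this last step: $p_\E$ and $n_\E$ are \emph{not} functions of $\delta$ alone, because many regions carrying the same sign may merge into a single connected component, and twisting $B$ alters which regions the curve separates. The heart of the argument is therefore a topological analysis, paralleling Haas's treatment of the maximal case \cite[Theorem 10.6.0.5]{haas1997real}, of the local surgery produced by twisting $B$ across the zone $Z_-$. I expect to prove that twisting an even circuit can only \emph{merge} ovals lying on the two sides of $B^\vee$, and that the even lattice point $w$ forces these merges to identify ovals of equal parity, so that each merge decreases exactly one of $p,n$ by one and leaves the other fixed; summing over the merges yields $p_{\E'}\le p_\E$ and $n_{\E'}\le n_\E$. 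Making precise the claim that only same-parity merges occur, and ruling out any splitting that would create ovals, is the delicate point, and is where the evenness hypothesis on $T'$ is indispensable.
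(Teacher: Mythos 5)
Your setup coincides with the paper's: reduce to a single even circuit, realise $\E$ and $\E'$ by sign distributions $\delta,\delta'$ that agree on the zone containing the special oval and differ on the other zone by the symmetry determined by the common direction modulo $2$ of the circuit, and use \Cref{LemSignParity} and \Cref{CorCompDistrib} to compare nesting parities of complement regions. Up to that point you are on the paper's track (the paper's own proof likewise treats $T'$ as a single multi-bridge), and your observation that the alternating factor is constant along the cut, because the dual edges are orthogonal modulo $2$ to the common direction, is correct.

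However, you stop exactly where the proof has to be done. As you yourself note, $p_\E$ and $n_\E$ are not determined by $\delta$ alone, and the claim that ``twisting an even circuit can only merge ovals, and only ovals of equal parity'' is announced (``I expect to prove\dots'', ``the delicate point'') but never argued; this claim is the entire content of the proposition. The paper closes the gap at the level of connected components via twisted cycles (\Cref{PropBasisTwistedCycles}): it constructs an injective homomorphism $\phi\colon H_1(\R C_{\E'};\Z_2)\to H_1(\R C_\E;\Z_2)$ sending the class of an oval whose twisted cycle avoids $T'$ to the class of the matching oval of $\R C_\E$ (after applying the symmetry $\varepsilon$ on one side of the circuit), and sending the class of an oval whose twisted cycle $\rho$ crosses $T'$ to the sum of the classes of those ovals of $\R C_\E$ whose twisted cycles restrict, on each side of the circuit, to pieces of $\rho$; the even lattice point on the cut is what forces every summand to have the same parity as $\rho$, and injectivity then yields both inequalities simultaneously. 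Without some version of this component-level bookkeeping --- or an equivalent surgery argument that rules out splittings and opposite-parity identifications --- your proposal only controls how the depth parity of each complement region changes, which does not by itself bound $p_{\E'}$ by $p_\E$ or $n_{\E'}$ by $n_\E$.
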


In order to prove \Cref{PropEvenTwists}, we will need the following definition.

\begin{definition}
\label{DefZone}
Let $C$ be a non-singular tropical curve in a non-singular projective tropical toric surface $\T \pr_\Sigma$, and let $T$ be an admissible set of twisted edges on $C$.
The \emph{zone decomposition} $\Delta_T$ of the Newton polygon $\Delta$ of $C$ is the subdivision of $\Delta$ induced by the set of edges $T^\vee$ dual to the edges of $T$.
We can write
\[ \Delta_T := \bigsqcup_{i=0}^k Z_{i}^T / \sim , \]
where $Z_i^T$ is a 2-dimensional face of $\Delta_T$ and $\sim$ is the gluing along the edges of $T^\vee$.
The 2-dimensional faces $Z_i^T$ are called \emph{zones} \cite{haas1997real}.
A \emph{zone partition} $Y_T := Y_0^T \sqcup Y_1^T$ is a partition of the zones of $\Delta_T$ such that any two zones intersecting in a 1-dimensional set belong to distinct $Y_j^T$. 
\end{definition}

\begin{proof}[Proof of \Cref{PropEvenTwists}]
We want to construct an injective homomorphism
\[ \phi : H_1 (\R C_{\E '}; \Z_2 ) \rightarrow H_1 (\R C_\E ; \Z_2) \]
such that the image by $\phi$ of the class of an oval in $\R C_{\E '}$ of parity $s\in \{ \text{even, odd} \}$ is given as the sum of classes of some ovals in $\R C_\E$ of parity $s$.
By definition of multi-bridge, the graph $C\backslash T'$ has exactly two connected components $C_0 ' , C_1 '$.
Let $C_i := \overline{C_i ' \cup T'}$ for $i=0,1$.
Up to action of $\mathcal{R}^2$, we can assume that $\E$ and $\E '$ are equal on $C_1$.
Said otherwise, we can assume that $\R (C_1)_\E = \R (C_1)_{\E '}$.
Let $\varepsilon \in \Z_2^2$ be the direction modulo 2 of the edges of $T'$.
Seeing $\varepsilon$ as a symmetry in $\mathcal{R}^2$, we get that $\R (C_0)_\E = \varepsilon (\R (C_0)_{\E '})$.

Let $Z_i$ be the zone of $\Delta_{T'}$ containing the edges dual to edges of $C_i'$, for $\Delta_{T'}$ the zone decomposition induced by $T'$.
By \Cref{LemSpecialOval}, there exists a connected component $v_0^\vee$ of $(\T \pr_\Sigma \backslash C)^*$ lying neighbourly outside special ovals of both $\R C_\E$ and $\R C_{\E '}$.
Up to renumbering the zones of $\Delta_{T'}$, we can assume that the integer point $v_0$ dual to $v_0^\vee$ belong to the union of symmetric copies $Z_1^* \cap \Z^2$.
We can choose two distributions of signs $\delta$ and $\delta'$ on $\Delta_C \cap \Z^2$ inducing the real phase structures $\E$ and $\E '$ and such that $\delta = \delta'$ on $Z_1 \cap \Z^2$.
In particular, we obtain $\delta (v_0) = \delta ' (v_0)$, hence the assumptions of \Cref{CorCompDistrib} are satisfied.

Every edge of $T'$ is dual to an edge of $\Delta_C$ containing an even vertex.
We denote by $C_{i,j} '$ the connected components of $C \backslash (T \cup T')$, such that each $C_{i,j} '$ is contained in the connected component $C_i '$ of $C \backslash T'$.
Let $Z_{i,j}$ be the zone in $\Delta_{T+T'}$ containing the edges dual to edges of $C_{i,j}'$ for any $i,j$.
Since the distributions of signs $\delta$ and $\delta'$ are equal on $Z_1 \cap \Z^2$, we obtain that for any $Z_{i,j}$ in the zone decomposition $\Delta_{T+T'}$, the distributions of signs $\delta$ and $\delta'$ are either both of Harnack type or both of inverse Harnack type on $Z_{i,j} \cap \Z^2$.
Since $\R (C_0)_\E = \varepsilon (\R (C_0)_{\E '})$, we obtain in particular that for any integer point $v$ in the union of symmetric copies $Z_0^* \cap \Z^2$, the equality $\delta (v) = \delta' (\varepsilon (v))$ is satisfied. 
 
We start the construction of the homomorphism $\phi$ by considering the ovals given as real part of twisted cycles that do not meet the set of twisted edges $T'$.
Every twisted cycle of $(C,\E')$ contained in a connected component $C_i '$ of $C\backslash T'$, for $i=1,2$, can be identified with the twisted cycle in $(C,\E )$ with the same edge support.
If such a twisted cycle $\rho$ is contained in $C_1 '$, we can also identify the real parts $\R \rho_\E$ and $\R \rho_{\E'}$, and if $\rho$ is contained in $C_0 '$, we have $\R \rho_\E = \varepsilon (\R \rho_{\E'})$.

Assume that $\rho$ is a twisted cycle of $(C,\E ')$ and $(C,\E)$ which is contained in $C_1 '$.
Since the distribution of signs $\delta$ and $\delta '$ are equal on $Z_1 \cap \Z^2$, by \Cref{CorCompDistrib} the ovals $\R \rho_\E$ and $\R \rho_{\E'}$ are contained in the same number modulo 2 of ovals of $\R C_\E$ and $\R C_{\E '}$, respectively.
Therefore we set $\phi [\R \rho_{\E'}] = [\R \rho_\E]$ for every twisted cycle $\rho$ of $(C,\E ')$ contained in the connected component $C_1 '$.

Assume now that $\rho$ is a twisted cycle of $(C,\E ')$ and $(C,\E)$ contained in $C_0 '$.
Since $\R (C_0)_\E = \varepsilon (\R (C_0)_{\E '})$, the oval $\R \rho_\E$ of $\R C_\E$ is obtained from the oval $\R \rho_{\E '}$ of $\R C_{\E '}$ by applying the symmetry $\varepsilon \in \mathcal{R}^2$.
Then since $\delta (v) = \delta ' (\varepsilon (v))$ for every integer point $v \in Z_0^* \cap \Z^2$, by \Cref{CorCompDistrib} the ovals $\R \rho_\E$ and $\R \rho_{\E'}$ are contained in the same number modulo 2 of ovals of $\R C_\E$ and $\R C_{\E '}$, respectively.
Therefore we set $\phi [\R \rho_{\E'}] = [\R \rho_\E]$ for every twisted cycle $\rho$ of $(C,\E ')$ contained in the connected component $C_0 '$.

Let finally $\rho$ be a twisted cycle in $(C,\E ')$ going through an edge of $T'$.
By assumption on $T'$, the edges of $\rho \cap T'$ lie in the boundary of connected components of $\T \pr_\Sigma \backslash C$ dual to even integer points in $\Delta_C \cap \Z^2$.
Recall that $\delta = \delta '$ on $Z_1^* \cap \Z^2$ and $\delta = \delta' \circ \varepsilon$ on $Z_0^* \cap \Z^2$.
Let $\R (\rho ')_\E$ be the real part of some twisted cycle $\rho '$ of $(C,\E)$ satisfying $(\rho ' \cap C_1 ) \subset \rho$, and let $\R (\rho '')_\E$ is the real part of some twisted cycle $\rho ''$ of $(C,\E)$ satisfying $(\rho '' \cap C_0 ) \subset \rho$.
Then for each even integer point $v \in \Delta_C \cap \Z^2$, there exists a symmetric copy $\varepsilon (v^\vee )$ of the connected component $v^\vee \subset (\T \pr_\Sigma \backslash C)$ such that $\varepsilon (v^\vee )$ lies either neighbourly inside or neighbourly outside $\R \rho_{\E '} , \R (\rho ')_\E$ and $\varepsilon(\R (\rho '')_\E)$.
Since $\delta , \delta '$ and $\delta' \circ \varepsilon$ are equal on the integer points $(T')^\vee \cap \Z^2$, by \Cref{CorCompDistrib}, the oval $\R \rho_{\E '}$ is contained in the same number modulo 2 of ovals in $\R C_{\E '}$ as the number modulo 2 of ovals of $\R C_{\E}$ containing $\R (\rho ')_\E$ and $\R (\rho '')_\E$. 
Therefore we set $\phi [\R \rho_{\E'}] = \Gamma \in H_1 (\R C_\E ; \Z_2 )$, for $\rho$ any twisted cycle of $(C,\E ')$ going through an edge of $T'$ and for $\Gamma$ the sum of classes $[\R (\rho''')_\E]$ of ovals of $\R C_\E$ with associated twisted cycles $\rho'''$ satisfying $(\rho''' \cap C_i) \subset \rho$ for some $i$.

Then the homomorphism $\phi$ is injective and preserve the parity of ovals, hence we obtain $p_{\E '} \leq p_\E$ and $n_{\E '} \leq n_\E $.
\end{proof}

\begin{remark}
In the case $\T \pr_\Sigma = \T \pr^2$, we can relax the assumptions of \Cref{PropEvenTwists} so that $T \in \Div (C)$ and $T'$ belong to the $\Z_2$-vector subspace generated by even multi-bridges, using \Cref{PropNewtonOval}.
However, one needs to be consider the (not necessarily unique) special oval in this case.
\end{remark}
 
\begin{remark} 
We obtain in \Cref{PropEvenTwists} a result similar to Haas' result for maximal curves in $\pr^2$ obtained via combinatorial patchworking \cite[Theorem 10.6.0.5]{haas1997real}.
These two results justify that in order to obtain examples of non-singular real tropical curves with many even or odd ovals, we can restrict to the case of even-free configuration of twists.
Since all the duals of configurations of twists satisfying \Cref{ThM-1} contain an even point, we can then restrict to even-free multi-bridges.
\end{remark}

\subsection{Even-free circuits}

Recall that a multi-bridge $B$ is \emph{even-free} if every integer point in $B^\vee \cap \Z^2$ is odd.
We denote by $\EvFree (C)$ the $\Z_2$-vector subspace of $\Div (C)$ generated by even-free multi-bridges, and we denote by $\EvFreeCirc (C)$ the $\Z_2$-vector subspace of $\Circuit (C)$ generated by even-free circuits.

\begin{definition}
Let $T \in \Circuit (C)$ a configuration of twists on a non-singular tropical curve $C$.
The unique zone $Z^T$ of $\Delta_T$ meeting the boundary edges of the Newton polygon $\Delta$ of $C$ is called the \emph{special zone} \cite{haas1997real}.
\end{definition}

\begin{proposition}
\label{LemEvenFreeCount}
Let $(C,\E )$ be a non-singular real tropical curve of strict even degree in $\T \pr_\Sigma$, with set of twisted edges $T \in \EvFreeCirc (C)$.
Let $Y_T := Y_1^T \sqcup Y_0^T$ be a zone partition such that $Y_1^T$ contains the special zone $Z^T$.
Let $p_j$ and $n_j$ be the number of even and odd interior integer points in $Y_j^T$.
Then 
\begin{align*}
p_\E & \geq n_1 + p_0 + 1 \\
n_\E & \geq p_1 + n_0 .
\end{align*}
Moreover, the real part $\R C_\E$ has at least $n_1 + p_0$ empty even ovals and $p_1+n_0$ empty odd ovals.
\end{proposition}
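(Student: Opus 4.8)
The plan is to exhibit explicitly the asserted empty ovals and to read off their parity from a distribution of signs $\delta$ inducing $\E$, comparing everywhere with the Harnack distribution $\delta_\emptyset$ of \Cref{DefHarnackDistrib}. First I would fix a distribution of signs $\delta$ on $\Delta_C \cap \Z^2$ inducing $\E$, normalised as follows: starting from $\delta_\emptyset$, which induces the empty configuration of twists, flip the signs on every integer point lying in the zones of $Y_0^T$ and leave $\delta_\emptyset$ unchanged on the zones of $Y_1^T$. Using \Cref{PropTwistSign+} together with the hypothesis $T\in \EvFreeCirc(C)$ — so that every integer point of $T^\vee$ is odd — one checks case by case that this flipped distribution twists exactly the edges of $T$: an edge interior to a zone has its surrounding signs multiplied by an even number of $(-1)$'s and stays non-twisted, whereas an edge of $T^\vee$ lies on a zone boundary, so exactly one of its two adjacent triangles is flipped and it becomes twisted. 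Since two distributions inducing the same twists differ only by a symmetry of $\mathcal{R}^2$ and a global sign, this $\delta$ induces $\R C_\E$ up to homeomorphism, with $\delta = \delta_\emptyset$ on $Y_1^T$ and $\delta = -\delta_\emptyset$ on $Y_0^T$. This sign-flip description is the analogue of the one used by Itenberg and Haas, and verifying it is exactly where the even-free hypothesis is essential; I expect this to be the main technical point.

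Next I would produce the empty ovals. Because $T$ is dividing, by \Cref{ThDividing+} every primitive cycle $\gamma_v$ bounding the region $v^\vee$ dual to an interior integer point $v$ meets $T$ in an even number of edges; and because $T$ is even-free, no point in the interior of a zone lies on $T^\vee$. For each interior integer point $v$ lying in the interior of a zone, equivalently with $\gamma_v \cap T = \emptyset$, the closed twisted walk around $\gamma_v$ never crosses a twisted edge, traces $\gamma_v$ once, and yields a twisted cycle $\rho_v$ with no $1$-valent vertex. By \Cref{Lem1valentOval} its real part $\R(\rho_v)_\E$ is an oval, and it is empty since it bounds a symmetric copy of the single region $v^\vee$, inside which $\R C_\E$ has no other component. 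Distinct such points give distinct empty ovals, all different from the special oval $\R(\rho_0)_\E$ of \Cref{LemSpecialOval}, which runs through the unbounded edges.

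It then remains to determine parities. The exterior region $v_0^\vee$ neighbouring the special oval is (a symmetric copy of) a vertex of $\Delta$, lying in the special zone $Z^T \subset Y_1^T$ where $\delta = \delta_\emptyset$. Since $(C,\E)$ has strict even degree, all vertices of $\Delta$ lie in a single class modulo $2$, and in the normalisation where this class is even — a vertex of $\Delta$ at the origin — the point $v_0$ is even, so $\delta(v_0) = -1$. As the special oval is the outermost oval, contained in $0$ ovals and hence even, \Cref{LemSignParity} gives that $v^\vee$ lies inside an odd number of ovals exactly when $\delta(v)\neq\delta(v_0)$, so the empty oval $\R(\rho_v)_\E$ is even precisely when $\delta(v)=+1$ and odd precisely when $\delta(v)=-1$. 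Feeding in the zone-wise form of $\delta$: a zone-interior point $v$ of $Y_1^T$ has $\delta(v)=\delta_\emptyset(v)$, equal to $+1$ iff $v$ is odd, while a zone-interior point of $Y_0^T$ has $\delta(v)=-\delta_\emptyset(v)$, equal to $+1$ iff $v$ is even. The points of $T^\vee$, all odd, lie on the common boundary of $Y_1^T$ and $Y_0^T$ and so are excluded from $p_j,n_j$. Hence the zone-interior points furnish at least $n_1+p_0$ even empty ovals and at least $p_1+n_0$ odd empty ovals, and adding the special even oval gives $p_\E \geq n_1+p_0+1$ and $n_\E \geq p_1+n_0$, as claimed.

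The estimates are inequalities rather than equalities because $\R C_\E$ may contain further ovals that we do not control — the non-empty ones, and possibly extra empty ovals assembled from the odd points on $T^\vee$ — which is precisely why only the empty ovals attached to zone-interior points are counted and the statement is phrased as a lower bound. The genuinely delicate steps are the first and third: establishing the zone-wise sign-flip form of $\delta$ from \Cref{PropTwistSign+} under the even-free hypothesis, and pinning down, via the strict-even-degree normalisation, that the exterior region $v_0$ of the special oval is even, so that \emph{even oval} corresponds to $\delta=+1$. Everything after that is the bookkeeping performed with \Cref{LemSignParity} and \Cref{CorCompDistrib}.
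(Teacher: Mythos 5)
Your overall strategy is the paper's: identify the distribution of signs zone by zone as Harnack or inverse Harnack according to the partition $Y_1^T\sqcup Y_0^T$, attach an empty oval to each zone-interior integer point, and read off parities with \Cref{LemSignParity} relative to the special oval of \Cref{LemSpecialOval}. But the step you yourself single out as the main technical point --- that flipping $\delta_\emptyset$ on the zones of $Y_0^T$ produces a distribution inducing exactly the twist set $T$ --- fails as stated, and no convention at the integer points of $T^\vee$ repairs it. The criterion of \Cref{PropTwistSign+} is a condition on individual vertex signs, not on ``flipped triangles'', and it has two cases. Concretely, suppose $e^\vee$ is an edge interior to a zone $Z$ whose two adjacent triangles have apexes $v_3^e\in T^\vee$ and $v_4^e$ in the interior of $Z$ with $v_3^e\equiv v_4^e \bmod 2$ (this happens already in the Itenberg-type $K_{2,4}$ configuration in $\Delta_{10}$, e.g.\ for $e^\vee=[(3,5),(2,3)]$ with apexes $(3,6)$ and $(3,4)$): by the second case of \Cref{PropTwistSign+}, $e$ acquires a spurious twist unless the $T^\vee$-point and the interior point are flipped alike, which forces the convention ``do not flip $T^\vee$''. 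But an edge such as $[(2,4),(2,3)]$ interior to a $Y_0^T$ zone, with apexes $(3,6),(1,3)\in T^\vee$ of distinct residues, falls under the first case and then acquires a spurious twist unless the $T^\vee$-points \emph{are} flipped. Both local configurations occur in valid unimodular triangulations, so your $\delta$ does not induce $T$. What is true, and what the paper uses, is weaker: the restriction of any $\delta$ inducing $\E$ to a single zone induces no twists there, hence equals $\pm\,\delta_\emptyset\circ\varepsilon_Z$ for a \emph{zone-dependent} symmetry $\varepsilon_Z\in\mathcal{R}^2$, and even-freeness forces only the sign $\pm$ (equivalently, the common value of $\delta$ at the even points of the zone) to alternate between adjacent zones.

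This propagates into your parity count. Since $\varepsilon_Z$ is in general non-trivial, a zone-interior odd point $v$ of $Y_1^T$ need not satisfy $\delta(v)=+1$; and \Cref{LemSignParity} must be applied to the symmetric copy $\varepsilon(v)\in\Delta_C^*\cap\Z^2$ whose dual component is actually bounded by the oval $\R(\rho_v)_\E$, using the extended value $\delta(\varepsilon(v))=(-1)^{\varepsilon_1v_1+\varepsilon_2v_2}\delta(v)$, which for odd $v$ differs from $\delta(v)$. Your table of four parities is nevertheless the correct one --- for even $v$ both discrepancies are invisible, and for odd $v$ the zone symmetry and the selection of the symmetric copy cancel --- but your argument establishes neither half of this cancellation, whereas the paper's proof explicitly passes through the copy $\varepsilon(v^\vee)$ bounded by the oval. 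The remaining ingredients (the empty ovals from zone-interior points, their distinctness from the special oval, the anchoring of parity at an exterior point, and the final bookkeeping) agree with the paper.
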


\begin{proof}
Let $(v_0)^\vee$ be a connected component of $(\T \pr_\Sigma \backslash C)^*$ lying neighbourly outside the real part $\R (\rho_0 )_\E$ of the special twisted cycle $\rho_0$.
Since $\R (\rho_0 )_\E$ is a special oval, the connected component $v_0^\vee$ lies outside every oval of $\R C_\E$.
Let $\delta$ be a distribution of signs on $\Delta_C$ inducing the real phase structure $\E$.
Then by \Cref{LemSignParity}, a connected component $v^\vee \subset (\T \pr^2 \backslash C)^*$ is contained in an even number of ovals of $\R C_\E$ if and only if $\delta (v) = \delta (v_0)$, for $v \in \Delta_C^* \cap \Z^2$ the integer point dual to $v^\vee$.

Let $Z_1^T$ and $Z_0^T$ be two adjacent zones in the zone decomposition $\Delta_T$, that is the intersection $Z_1^T \cap Z_0^T$ is 1-dimensional.
Let $v'$ be an even integer point in $Z_1^T$ and let $v''$ be an even integer point in $Z_0^T$.
Since $T\in \EvFreeCirc (C)$, the distribution of signs $\delta$ must satisfy $\delta (v') \neq \delta (v'')$.
In particular, the distribution of signs $\delta$ is of Harnack type on $Z_1^T\cap \Z^2$ if and only if $\delta$ is of inverse Harnack type on $Z_0^T\cap \Z^2$.
Therefore we can assume that $\delta$ is of Harnack type on each zone in $Y_1^T$ and of inverse Harnack type on each zone in $Y_0^T$.

For each integer point $v \in \Delta_C \cap \Z^2$ contained in the interior of a zone of $\Delta_T$, there exists a twisted cycle $\rho_v$ in $(C,\E)$ bounding the dual connected component $v^\vee \subset \T \pr_\Sigma \backslash C$.
Moreover, there exists a unique symmetric copy $\varepsilon (v) \in \Delta_C^* \cap \Z^2$ of $v$ such that the oval $\R (\rho_{v})_\E$ bounds the symmetric copy $\varepsilon (v^\vee ) \subset (\T \pr_\Sigma \backslash C)^*$. 
Then the oval $\R (\rho_{v})_\E$ is even if and only if $\delta (\varepsilon (v)) = \delta (v_0)$.

We obtain the four following cases.
\begin{itemize}
\item If the point $v$ is even and contained in a zone of $Y_1^T$, then $\R (\rho_{v})_\E$ is an odd oval.
\item If the point $v$ is odd and contained in a zone of $Y_1^T$, then $\R (\rho_{v})_\E$ is an even oval.
\item If the point $v$ is even and contained in a zone of $Y_0^T$, then $\R (\rho_{v})_\E$ is an even oval.
\item If the point $v$ is odd and contained in a zone of $Y_0^T$, then $\R (\rho_{v})_\E$ is an odd oval.
\end{itemize}

Hence we get 
\begin{align*}
p_\E & \geq n_1 + p_0  \\
n_\E & \geq p_1 + n_0 .
\end{align*} 
Since the special twisted cycle $\rho_0$ cannot be obtained as a twisted cycle of the form $\rho_v$, we obtain then $p_\E \geq n_1 + p_0 + 1$.
The ``Moreover" statement comes from the fact that no oval of $\R C_\E$ can be contained in an oval of the form $\R (\rho_v)_\E$, since they each bound a single connected component of $(\T \pr_\Sigma \backslash C)^*$.
\end{proof}

\begin{remark}
If $\T \pr_\Sigma = \T \pr^2$, we can relax the assumption $T\in \EvFreeCirc (C)$ to $T\in \EvFree (C)$ in \Cref{LemEvenFreeCount}, by \Cref{PropNewtonOval}.
However, one needs to consider the (not necessarily unique) special twisted cycle in that case.
\end{remark}

\subsection{Even-free complete bipartite graphs}

For $(C,\E)$ a non-singular real tropical curve of strict even degree in $\T \pr_\Sigma$ with configuration of twists $T$ belonging to $\EvFreeCirc (C)$, we want to count the number of even and odd ovals of $\R C_\E$ not already counted by \Cref{LemEvenFreeCount}.
We restrict first to a case similar to the configuration of twists Itenberg and Haas used to construct counter-examples to Ragsdale conjecture (\cite{itenberg1993contre}, \cite{haas1995multilucarnes}, \cite{itenberg2001number}).

\begin{lemma}
\label{LemCircGraph}
Let $(C,\E)$ be a non-singular dividing $(M-2)$ real tropical curve in $\T \pr_\Sigma$, such that its set of twisted edges $T$ belong to $\Circuit (C)$.
Then $T$ is dual to either a complete bipartite graph $T^\vee \subset \Delta_C$ of the form $K_{2,2l}$ for some $l \geq 1$, or to a complete tripartite graph of the form $K_{2,2,2}$.
\end{lemma}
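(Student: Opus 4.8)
The plan is to read off the structure of $T^\vee$ from the matrix $A_T$ of \Cref{ThConnectHom+} and the dual graph $\Gamma_T$, using crucially that $T\in\Circuit(C)$ makes every twisted edge non-exposed. First I would record two consequences of the hypotheses. Since $T$ is dividing, \Cref{ThDividing+} gives $|\gamma_i\cap T|\equiv 0 \pmod 2$ for every primitive cycle, i.e. the diagonal of $A_T$ vanishes. Since $T\in\Circuit(C)$, every edge of $T$ is non-exposed, so $T\cap\Exp(C)=\emptyset$ and the matrix $D_T$ from the proof of \Cref{ThM-2} is zero; hence $A_T=Q_{\Gamma_T}=D+N$ with $D$ the valency matrix and $N$ the adjacency matrix of $\Gamma_T$. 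The vanishing diagonal then says $D=0$, i.e. \emph{every vertex of $\Gamma_T$ has even valency}, and moreover $A_T=N$. Next I would note that, because a non-exposed edge has both adjacent complementary regions bounded, the endpoints of each dual edge $e^\vee$ are interior integer points of $\Delta_C$, so that under the duality (primitive cycle)$\leftrightarrow$(interior integer point) the subgraph $T^\vee\subset\Delta_C$ is isomorphic to the non-isolated part of $\Gamma_T$.

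Then I would invoke \Cref{ThM-2}: as $(C,\E)$ is a dividing $(M-2)$ curve, $\Gamma_T$ is a complete planar bipartite or a complete planar tripartite graph together with isolated vertices, so $T^\vee$ is a planar $K_{m,n}$ or $K_{l,m,n}$. In the bipartite case the even-valency condition forces $m$ and $n$ even, and planarity (\Cref{RemComplete2}, no $K_{3,3}$) forces $\min(m,n)\le 2$; hence $\min(m,n)=2$ and $T^\vee=K_{2,2l}$ for some $l\ge 1$. In the tripartite case the three valency conditions $m+n,\ l+n,\ l+m\equiv 0\pmod 2$ force $l\equiv m\equiv n\pmod 2$, so the three part-sizes share a common parity.

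The main point is to settle the tripartite case. If the three parts are all even, each is $\ge 2$, and the planar edge bound $E\le 3V-6$ (an equality for the octahedron $K_{2,2,2}$, and already violated by $K_{2,2,3}$ and $K_{2,2,4}$) forces $T^\vee=K_{2,2,2}$. If the three parts were all odd, planarity would restrict $T^\vee$ to the family $K_{1,1,n}$ with $n$ odd, since any two parts of size $\ge 3$ would contain $K_{3,3}$. To exclude this I would use the geometry of $\Delta_C$: it is a unimodular triangulation and all its integer points are vertices, so each edge of $T^\vee$ is a \emph{primitive} segment and hence has nonzero direction modulo $2$. On the other hand, admissibility of $T$ (\Cref{EqAdm+}), applied to the primitive cycle dual to an interior vertex $v$ and transported through the lattice perpendicular to $\Delta_C$, says that the directions modulo $2$ of the edges of $T^\vee$ meeting $v$ sum to zero. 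Applying this at a degree-two vertex $c$ of the size-$n$ part gives $\overrightarrow{ca}\equiv\overrightarrow{cb}\pmod 2$ for its two neighbours $a,b$, whence $a\equiv b\pmod 2$; but then the edge $ab\in T^\vee$ would have direction $\equiv 0\pmod 2$, contradicting primitivity. This rules out the all-odd case and leaves $K_{2,2,2}$ as the only tripartite possibility, completing the proof.

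I expect the dualization of the admissibility condition together with the primitivity argument to be the delicate step: it requires carefully matching the mod-$2$ direction $\overrightarrow{e}$ of an edge of $C$ with that of its dual $e^\vee$ in $\Delta_C$ (they differ by the lattice perpendicular, which is an involution on the nonzero classes of $\Z_2^2$) and using that every integer point of $\Delta_C$ is a triangulation vertex so that all edges are primitive. The bipartite classification and the all-even tripartite case, by contrast, are immediate from even valency and planarity once the correspondence $T^\vee\cong\Gamma_T$ is in place.
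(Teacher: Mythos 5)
Your proof is correct, and its skeleton --- reduce via \Cref{ThM-2} to a complete planar bipartite or tripartite graph, identified with $T^\vee$ because every edge of $T$ is non-exposed, then eliminate cases --- is the same as the paper's. Where you differ is in how the elimination is carried out, and your version is tighter. The paper lists the five planar possibilities $K_{1,n}$, $K_{2,n}$, $K_{1,1,n}$, $K_{1,2,2}$, $K_{2,2,2}$, discards $K_{1,n}$ because it is a tree (implicitly assuming $T^\vee$ must contain a cycle), and discards $K_{1,1,n}$, $K_{1,2,2}$ and $K_{2,2l+1}$ with a one-line appeal to ``twist-admissibility forces exposed edges'' without spelling out the computation. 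Your observation that $D_T=0$ gives $A_T=Q_{\Gamma_T}$, so that the vanishing diagonal of $A_T$ (the dividing condition) forces every vertex of $\Gamma_T$ to have even valency, disposes of $K_{1,n}$, $K_{1,2,2}$ and $K_{2,2l+1}$ in one stroke. The one case surviving the parity test that the paper only asserts and you actually prove is $K_{1,1,n}$ with $n$ odd: transporting \Cref{EqAdm+} through the mod-$2$ perpendicular at a degree-two vertex $c$ gives $\overrightarrow{ca}+\overrightarrow{cb}\equiv 0$, hence $a\equiv b \bmod 2$, so the edge $ab$ of $T^\vee$ would be a non-primitive edge of the unimodular triangulation $\Delta_C$ --- a contradiction, since every integer point of $\Delta_C$ is a vertex of the subdivision. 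This, together with the $E\le 3V-6$ reduction of the all-even tripartite case to $K_{2,2,2}$, is exactly the content the paper leaves implicit; in short, same route, but your case analysis supplies the missing details.
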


\begin{proof}
By assumption, all the edges of $T$ are non-exposed.
Thus, by \Cref{ThM-2}, we get that $T^\vee$ is either a complete planar bipartite or a complete planar tripartite graph.
By \Cref{RemComplete2}, the graph $T^\vee$ cannot contain a subgraph of the form $K_{3,3}$.
Hence $T^\vee$ is of the form
\begin{enumerate}
\item \label{ItemGraph1} either $K_{1,n}$, for some $n\geq 1$;
\item \label{ItemGraph2} or $K_{2,n}$, for some $n\geq 1$;
\item \label{ItemGraph5} or $K_{1,1,n}$, for some $n\geq 1$;
\item \label{ItemGraph3} or $K_{1,2,2}$;
\item \label{ItemGraph4} or $K_{2,2,2}$.
\end{enumerate}

A complete bipartite graph $K_{1,n}$ is a tree, hence it does not contain any cycle.
Thus $T^\vee$ cannot satisfy \Cref{ItemGraph1}.

If the dual graph of non-exposed twisted edges $\Gamma_T$ is a  graph $K_{1,1,n}$ (plus isolated vertices), then by twist-admissibility condition (\Cref{EqAdm+}) the set of twisted edges must contain some exposed edges.
Hence $T^\vee$ cannot satisfy \Cref{ItemGraph5}.

Similarly, as we can see in \Cref{FigK221}, if the dual graph of non-exposed twisted edges $\Gamma_T$ is a graph $K_{1,2,2}$ (plus isolated vertices), then by twist-admissibility condition (\Cref{EqAdm+}) the set of twisted edges must contain some exposed edges.
Hence $T^\vee$ cannot satisfy \Cref{ItemGraph3}.

Therefore, $T^\vee$ must satisfy either \Cref{ItemGraph2} or \Cref{ItemGraph4}.
Now if $T^\vee$ satisfies \Cref{ItemGraph2}, then $n$ must be even, as otherwise by twist-admissibility condition (\Cref{EqAdm+}) the set of twisted edges must contain some exposed edges.
Thus in the case of \Cref{ItemGraph2}, the graph $T^\vee$ must additionally be of the form $K_{2,2l}$ for some $l\geq 1$.
\end{proof}

\begin{lemma}
\label{LemEvenOddTwistBipartite}
Let $(C,\E)$ be a non-singular real tropical curve of strict even degree in $\T \pr_\Sigma$, such that its set of twisted edges $T$ belong to $\EvFreeCirc (C)$ and is dual to a complete bipartite graph $T^\vee \subset \Delta_C$ of the form $K_{2,2l}$, for some $l \geq 1$.
Let $Y_T := Y_1^T \sqcup Y_0^T$ be the zone partition such that $Y_1^T$ contains the special zone.
There are exactly $l+1$ twisted cycles meeting $T$ with real part an even oval and $l-1$ twisted cycles meeting $T$ with real part an odd oval.
\end{lemma}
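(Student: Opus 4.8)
The plan is to localise everything to the ladder-shaped piece of $C$ dual to the subgraph $T^\vee=K_{2,2l}$, and then to read off the parities from a sign distribution adapted to the zone partition $Y_T$. Write the two vertices of the small side of $K_{2,2l}$ as $P_1,P_2$ and the $2l$ vertices of the large side as $Q_1,\dots ,Q_{2l}$, ordered so that the bounded faces of $K_{2,2l}$ are the quadrilaterals $F_j$ with vertices $P_1,Q_j,P_2,Q_{j+1}$ for $j=1,\dots ,2l-1$. Each twisted edge of $T$ is dual to an edge $P_iQ_j$, so on $C$ the set $T$ bounds the regions $R(P_1),R(P_2)$ (each meeting $2l$ twisted edges) and $R(Q_1),\dots ,R(Q_{2l})$ (each meeting the two twisted edges dual to $P_1Q_j$ and $P_2Q_j$); together with the non-twisted edges dual to the diagonals $Q_jQ_{j+1}$ this is a ladder whose rungs $c_0,\dots ,c_{2l}$ are dual to those diagonals and to the two outer edges. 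First I would record that every quadrilateral $F_j$ is a union of two unimodular triangles of area $1$, hence by Pick's theorem contains no interior lattice point; consequently the zones of $\Delta_T$ are exactly $F_1,\dots ,F_{2l-1}$ together with the special zone $Z^T$, the zone-adjacency graph is the even cycle $Z^T-F_1-\dots -F_{2l-1}-Z^T$ of length $2l$, and the zone partition places $Z^T$ and the $F_j$ with $j$ even in $Y_1^T$ and the $F_j$ with $j$ odd in $Y_0^T$.

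Next I would count the twisted cycles meeting $T$. By \Cref{LemCircGraph} and \Cref{ThM-2} the dual graph of non-exposed twisted edges $\Gamma_T=T^\vee=K_{2,2l}$ makes $(C,\E)$ a dividing $(M-2)$ curve, so $\rank A_T=2$ and by \Cref{ThConnectHom+} the real part $\R C_\E$ has $g-1$ components, where $g$ is the number of primitive cycles of $C$, equivalently the number of interior lattice points of $\Delta$; all of these components are ovals by \Cref{CorAllOvalsCircuit}. These interior points split into the points interior to the zones and the $2l+2$ vertices $P_1,P_2,Q_1,\dots ,Q_{2l}$ of $T^\vee$; since a twisted cycle avoids $T$ exactly when it bounds a single region carrying no twisted edge on its boundary, the ovals \emph{not} meeting $T$ are the empty ovals around the zone-interior points (as in \Cref{LemEvenFreeCount}) together with the single special oval of \Cref{LemSpecialOval}. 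Subtracting gives exactly $2l$ ovals meeting $T$. To exhibit their shape I would trace the twisted walks along the ladder: starting on the rung $c_{j-1}$ the walk runs along one side, switches side at each of the four twisted edges $a_j,a_{j+1},b_j,b_{j+1}$ of the columns $Q_j,Q_{j+1}$, returns along $c_{j+1}$, and closes up, producing for each $j=1,\dots ,2l-1$ a small oval encircling the rung $c_j$ of the zone $F_j$; the four remaining twisted-edge sides $a_1,b_1,a_{2l},b_{2l}$ assemble, through the return arcs of $R(P_1)$ and $R(P_2)$, into one further ``outer'' oval. This realises the $2l$ ovals as $2l-1$ zone-ovals, one per quadrilateral $F_j$, plus one outer oval.

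Finally I would fix the parities. As in \Cref{LemEvenFreeCount}, choose a distribution of signs $\delta$ inducing $\E$ that is of Harnack type on the zones of $Y_1^T$ and of inverse Harnack type on those of $Y_0^T$; this is consistent precisely because $T\in\EvFreeCirc(C)$ forces $\delta$ to take opposite values on the even lattice points of any two adjacent zones. Normalising $\delta(v_0)=+1$ at a region $v_0^\vee$ lying neighbourly outside the special oval, an oval is even if and only if the region immediately inside it carries a sign different from $\delta(v_0)$, by \Cref{LemSignParity} and \Cref{CorCompDistrib}. The oval associated with $F_j$ bounds a symmetric copy of one of the odd regions $R(Q_j),R(Q_{j+1})$ meeting $F_j$, so its parity is governed by whether $\delta$ is of Harnack or inverse Harnack type on $F_j$: I would check that it is even exactly when $F_j\in Y_0^T$, i.e.\ for the $l$ zones $F_1,F_3,\dots ,F_{2l-1}$, and odd for the $l-1$ zones $F_2,F_4,\dots ,F_{2l-2}$, while the outer oval is even. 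Adding these contributions yields $l+1$ even and $l-1$ odd ovals meeting $T$. The main obstacle is this last step: correctly pinning down, for each oval meeting $T$, which symmetric copy of which region it bounds, since the parity hinges on the value of $\delta$ on that particular copy rather than on the abstract zone; I expect to control this by following the side-switches of the twisted walk orthant by orthant and invoking \Cref{CorCompDistrib} to propagate signs outward from the special oval.
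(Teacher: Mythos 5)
Your proposal follows essentially the same route as the paper's proof: you obtain the $2l$ twisted cycles meeting $T$ by subtracting the empty ovals around zone-interior points and the special oval of \Cref{LemSpecialOval} from the $g-1$ components guaranteed by \Cref{ThM-2} and \Cref{ThConnectHom+}, you match these cycles with the $2l-1$ bounded zones plus one outer cycle, and you read off parities from a Harnack/inverse-Harnack distribution adapted to the zone partition via \Cref{LemSignParity}. Your explicit identification of the zone-adjacency graph as an even cycle of length $2l$, which forces $l$ zones into $Y_0^T$ and $l-1$ non-special zones into $Y_1^T$, is a useful addition that the paper only states at the very end; and the delicate point you flag yourself (on which symmetric copy of $Q_j^\vee$ the sign must be evaluated) is exactly the point the paper settles by asserting that each $\R (\rho_Z)_\E$ lies in a single orthant $\varepsilon$ with $\varepsilon (Q_j^\vee ), \varepsilon (Q_{j+1}^\vee )$ neighbourly inside it.

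One claim must be removed: it is \emph{not} true that each quadrilateral $F_j$ with vertices $P_1 , Q_j , P_2 , Q_{j+1}$ is a union of two unimodular triangles with no interior lattice point. Primitivity of the four edges $P_iQ_j$ does not bound the area of $F_j$, and the paper's own examples violate your claim: in \Cref{ExRagsdale14} (see \Cref{FigCounterDeg14Zoom}) the central quadrilateral has vertices $(5,6),(4,3),(5,0),(6,3)$, area $6$, and five interior lattice points $(5,1),\ldots ,(5,5)$, the point $(5,3)$ being explicitly singled out there. Fortunately this is harmless to your argument: the interior points of the $F_j$ are still ``zone-interior points'' whose empty ovals do not meet $T$, so the subtraction still yields $2l$, and the parity computation only uses the regions dual to $Q_j , Q_{j+1}$ lying neighbourly inside the oval attached to $F_j$. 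But the ``ladder with rungs $c_j$'' picture, and the description of each such oval as a small circle around a single rung, must be replaced by an oval enclosing the entire part of $C$ dual to $F_j$ together with whatever empty ovals it contains.
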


\begin{proof}
By \Cref{ThM-2}, we get that $(C,\E)$ is a non-singular dividing $(M-2)$ real tropical curve, thus $\R C_\E$ has $g-1$ connected components, for $g$ the number of primitive cycles on $C$.
By assumption on $T$, there are $g-2l-2$ primitive cycles $\gamma$ on $C$ such that $\gamma \cap T = \emptyset$ (with $\gamma$ seen as a set of bounded edges), so that there are $g-2l-2$ twisted cycles on $(C,\E)$ bounding a single connected component of $\T \pr_\Sigma \backslash C$.
By \Cref{LemSpecialOval}, there is a unique twisted cycle on $(C,\E)$ going through all unbounded edges of $C$, and this twisted cycle does not meet $T$. 
Thus, there are exactly $g-2 - (g-2l-2) = 2l$ twisted cycles on $(C,\E)$ meeting $T$.
Their real parts are partitioned into even and odd ovals as follows.
Since $T^\vee$ is a graph of the form $K_{2,2l}$ and all edges of $T$ are non-exposed, we get that for each zone $Z$ in the zone decomposition $\Delta_T$, distinct from the special zone, there exists a unique twisted cycle $\rho_Z$ on $(C,\E )$ with edge support dual to the boundary $\partial Z$.

Since every edge of $T$ is non-exposed, the real part $\R (\rho_Z)_\E$ is contained in a single orthant $\varepsilon$ of $\R \pr_\Sigma$.
Moreover, the two integer points $v , v' \in \partial Z \cap \Z^2$ such that $\varepsilon (v^\vee)$ and $\varepsilon ((v')^\vee)$ lie neighbourly inside $\R (\rho_Z)_\E$ are exactly the integer points of $\partial Z$ belonging to the set of $2l$ vertices of $K_{2,2l}$, for $v^\vee , (v')^\vee \subset \T \pr_\Sigma \backslash C$ the connected components dual to $v$ and $v'$.

Let $\delta$ be a distribution of signs on $\Delta_C \cap \Z^2$ inducing the real phase structure $\E$, and let $v_0^\vee \in (\T \pr_\Sigma \backslash C)^*$ a connected component lying outside every oval of $\R C_\E$.
Since $T$ is even-free, we can assume that $\delta$ is of Harnack type on every zone of $Y_1^T$ and of inverse Harnack type on every zone of $Y_0^T$, see the proof of \Cref{LemEvenFreeCount}.

If $Z$ belongs to $Y_0^T$, we get that \[ \delta (v_0) \neq \delta (\varepsilon (v)) \text{ and } \delta (v_0) \neq \delta (\varepsilon (v')), \] so that $\R (\rho_Z)_\E$ is an even oval by \Cref{LemSignParity}.
Similarly, if $Z$ belongs to $Y_1^T$, we get that \[ \delta (v_0) = \delta (\varepsilon (v)) = \delta (\varepsilon (v')), \] so that $\R (\rho_Z)_\E$ is an odd oval by \Cref{LemSignParity}.

The last twisted cycle $\rho_{Z^T}$ has edge support dual the edges in $\partial Z^T \cap K_{2,2l}$, for $Z^T$ the special zone in $\Delta_T$.
The two integer points $v , v' \in \partial Z^T \cap \Z^2$ such that $\varepsilon (v^\vee)$ and $\varepsilon ((v')^\vee)$ lie neighbourly inside $\R (\rho_Z)_\E$ are exactly the integer points of $\partial Z$ belonging to the set of $2$ vertices of $K_{2,2l}$.
We get that \[ \delta (v_0) = \delta (\varepsilon (v)) = \delta (\varepsilon (v')), \] so that $\R (\rho_Z)_\E$ is an even oval by \Cref{LemSignParity}. 

Therefore, we just need to know the number of zones in each $Y_j^T$, distinct from the special zone, to count the number of even and odd ovals coming from twisted cycles meeting $T$.
There are $l$ zones in $Y_0^T$, which together with the special zone $Z^T$ give rise to $l+1$ even ovals.
There are $l-1$ zones in $Y_1^T$ distinct from $Z^T$, which give rise to $l-1$ odd ovals.
\end{proof}

\begin{theorem}
\label{ThEvenOddCountBipartite}
Let $(C,\E)$ be a non-singular dividing real tropical curve of strict even degree in $\T \pr_\Sigma$.
Assume that the set of twisted edges $T$ on $C$ induced by $\E$ is of the form $T := T_1 \sqcup \ldots \sqcup T_s \in \EvFreeCirc (C)$ such all $T_i$'s are cycle-disjoints and each $T_i$ is dual to a complete bipartite graph of the form $K_{2,2l_i}$.
Let $Y_T := Y_1^T \sqcup Y_0^T$ be the zone partition such that $Y_1^T$ contains the special zone $Z^T$.
Let $p_j$ and $n_j$ be the number of even and odd interior integer points in $Y_j^T$.
Then
\begin{align*}
p_\E & = n_1 + p_0 + 1 + \sum\limits_{i=1}^s (l_i+1) \\
n_\E & = p_1 + n_0 + \sum\limits_{i=1}^s (l_i-1) .
\end{align*}
\end{theorem}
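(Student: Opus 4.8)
The plan is to partition the ovals of $\R C_\E$ into three mutually disjoint families, count each separately, and in every case reduce to a result already proved. To begin, since $T\in\EvFreeCirc(C)\subseteq\Circuit(C)$, \Cref{CorAllOvalsCircuit} ensures that every connected component of $\R C_\E$ is an oval, so $p_\E+n_\E$ is the total number of components; and since the $T_i$ are cycle-disjoint with each $T_i^\vee=K_{2,2l_i}$ bipartite (hence satisfying \Cref{ThM-2}), \Cref{CorM-2sDiv} shows that $(C,\E)$ is a dividing $(M-2s)$ real tropical curve. This yields the a priori equality $p_\E+n_\E=g+1-2s$, which I would retain as a final consistency check, using that $g$ equals the number of interior lattice points of $\Delta$.

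First I would isolate the twisted cycles that do \emph{not} meet $T$. These are the special twisted cycle $\rho_0$ of \Cref{LemSpecialOval}, whose real part is an even oval, together with the empty ovals $\R(\rho_v)_\E$ attached to the integer points $v$ lying in the interior of the zones of $\Delta_T$. Because the vertices of the graphs $T_i^\vee$ lie on the zone boundaries $T^\vee$, these interior points are exactly those counted by $p_j$ and $n_j$, and the four-case analysis from the proof of \Cref{LemEvenFreeCount} assigns each empty oval its parity. This gives precisely $n_1+p_0$ even and $p_1+n_0$ odd empty ovals, so the family not meeting $T$ contributes $n_1+p_0+1$ even ovals and $p_1+n_0$ odd ovals.

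Next I would treat the ovals meeting $T$ using cycle-disjointness: a closed twisted walk changes sides only at edges of $T$, so it can use edges of at most one block $T_i$, and the twisted cycles meeting $T$ partition according to the block they meet. The bounded zones of $\Delta_T$ are the bounded faces of the individual graphs $T_i^\vee$, whereas the single global special zone $Z^T$ borders every block and therefore contributes one outermost twisted cycle to each block rather than a single cycle overall; this is exactly what makes the block counts add up to $\sum_i 2l_i$ cycles. Applying \Cref{LemEvenOddTwistBipartite} blockwise, each $T_i$ (even-free, dual to $K_{2,2l_i}$) yields $l_i+1$ even and $l_i-1$ odd ovals, the outermost cycle being one of the even ones. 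Summing over $i$ and adding the previous family gives $p_\E=n_1+p_0+1+\sum_{i=1}^s(l_i+1)$ and $n_\E=p_1+n_0+\sum_{i=1}^s(l_i-1)$, as claimed; the identity $(p_1+n_1+p_0+n_0)+1+2\sum_i l_i=g+1-2s$ then confirms that no oval is missed or double counted.

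The step I expect to be the main obstacle is the blockwise application of \Cref{LemEvenOddTwistBipartite} in the presence of the other blocks. That lemma is stated for a single $K_{2,2l}$, and the parity of an oval is read from a distribution of signs through \Cref{LemSignParity}, which a priori depends on the Harnack versus inverse-Harnack type on all zones of $\Delta_T$. I would resolve this by verifying that the global zone partition $Y_T=Y_1^T\sqcup Y_0^T$ restricts to a partition of the zones of each $\Delta_{T_i}$ of the kind required by the lemma, with the global special zone $Z^T$ contained in the special zone of $\Delta_{T_i}$; the even-free hypothesis on $T_i$ forces the Harnack type to alternate across the edges of $T_i^\vee$, so the sign data governing an oval meeting $T_i$ agrees with the local data used in the lemma. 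Handling the splitting of the unique special zone into one outermost cycle per block, and checking that each such cycle is distinct from $\rho_0$ and from the outermost cycles of the other blocks, is the delicate bookkeeping that the cycle-disjointness hypothesis is there to supply.
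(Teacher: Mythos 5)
Your proposal is correct and follows essentially the same route as the paper: the ovals not meeting $T$ are counted via \Cref{LemEvenFreeCount} (giving $n_1+p_0+1$ even and $p_1+n_0$ odd), and the ovals meeting $T$ are counted blockwise by applying \Cref{LemEvenOddTwistBipartite} to each $T_i$, which is legitimate precisely because the $T_i$ are disjoint and cycle-disjoint. The extra care you take in justifying the blockwise application and the final consistency check against $g+1-2s$ go beyond the paper's terse argument but do not change the approach.
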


\begin{proof}
By \Cref{LemEvenFreeCount}, the real part $\R C_\E$ has $n_1+p_0+1$ even ovals and $p_1+n_0$ odd ovals coming from twisted cycles not meeting $T$.
By \Cref{LemEvenOddTwistBipartite} applied on each $T_i$ (this is possible since all $T_i$'s are disjoint and cycle-disjoint), there are $l_i+1$ even ovals and $l_i-1$ odd ovals coming from twisted cycles meeting $T_i$.
Therefore we obtain the result.
\end{proof}

\subsection{Counter-examples to Ragsdale conjecture}

\begin{figure}
\centering
\begin{subfigure}[t]{0.45\textwidth}
	\centering
	\includegraphics[width=\textwidth]{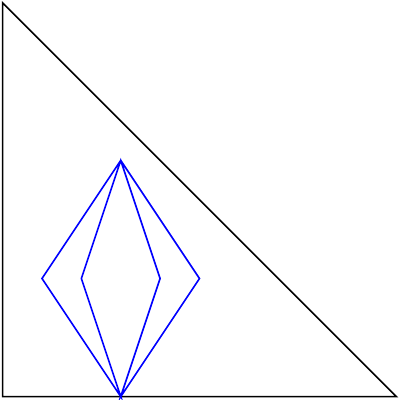}
	\caption{Itenberg's zone decomposition in degree 10.}
	\label{FigItenberg}
\end{subfigure}\hfill
\begin{subfigure}[t]{0.45\textwidth}
	\centering
	\includegraphics[width=0.69\textwidth]{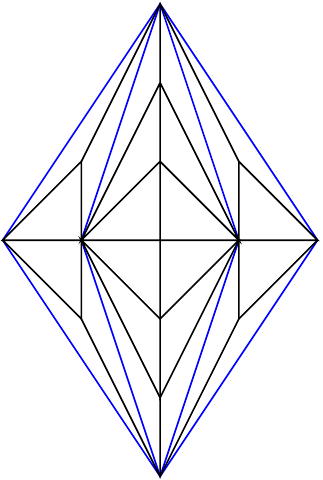}
	\caption{Subdivision inside Itenberg's configuration.}
	\label{FigItenbergZoom}
\end{subfigure}
\caption{Example \ref{ExItenberg}.}
\end{figure}

Recall that for a non-singular real algebraic curve $\mathcal{C}$ of degree $d = 2k$ in $\pr^2$, Ragsdale's conjecture states that the number of even ovals of $\mathcal{C}(\R )$ is less than or equal to $R(k)+1$, and the number of odd ovals of $\mathcal{C} (\R )$ is less or equal to $R (k)$, for $R(k):= \frac{3k(k-1)}{2}$ the number of odd interior integer points in the lattice polygon $\Delta_{2k}$.
We want to use the results of preceding sections in order to construct new counter-examples to Ragsdale's conjecture, in a similar manner as Itenberg \cite{itenberg1993contre}, \cite{itenberg2001number} and Haas \cite{haas1995multilucarnes}.

We first construct intermediate counter-examples coming from dividing $(M-2)$ real tropical curve $(C,\E )$ of degree $2k$ in $\T \pr^2$, for $k\geq 5$, with set of twisted edges dual to a specific complete bipartite graph.

Let $t$ be an integer satisfying 
\[  0 \leq t \leq \left\lfloor \frac{k-5}{2} \right\rfloor .\]
Let $m$ be an integer satisfying \[ 1 \leq m \leq \left\lfloor \frac{2k-1-4t}{6} \right\rfloor . \]
Let $K_{2,4m}$ be a complete bipartite graph in $\Delta_{2k}$ with two sets of vertices of the following form.
\begin{itemize}
\item the set of $4m$ vertices is of the form \[ \{ B_i = (b_i, 3+4t) , i = 1, \ldots , 4m \} \subset \Delta_{2k} \cap \Z^2 \] with $b_{i+1} = b_{i} + 1$ if $i= 1\mod 2$ and $b_{i+1} = b_{i} + 2$ if $i= 0 \mod 2$.  
\item The set of $2$ vertices is of the form \[ \{ A_1 = (a_1 , 6+4t ) ,  A_2  = (a_2 , 4t)   \} \subset \Delta_{2k} \cap \Z^2 , \]
such that $a_1, a_2$ are odd and 
\begin{align*}
a_1 + b_1 & \neq 0 \mod 3 , \quad a_1 + b_{4m}  \neq 0 \mod 3 \\
a_2 + b_1 & \neq 0 \mod 3 , \quad a_2 + b_{4m}  \neq 0 \mod 3.
\end{align*}
\end{itemize}  

The \emph{length} $L$ of a configuration $K_{2,4m}$ as described above is the difference $L:= b_{4m}  - b_1 = 4 + 6(m-1)$.
Note that the condition $1 \leq m \leq \left\lfloor \frac{2k-1-4t}{6} \right\rfloor$ is given so that the length of $K_{2,4m}$ is less than or equal to the horizontal length of $\Delta_{2k}$ at height $3+4t$.
Said otherwise, we have $L \leq 2k-3-4t$.
%

\begin{figure}
\centering
\begin{subfigure}[t]{0.45\textwidth}
	\centering
	\includegraphics[width=\textwidth]{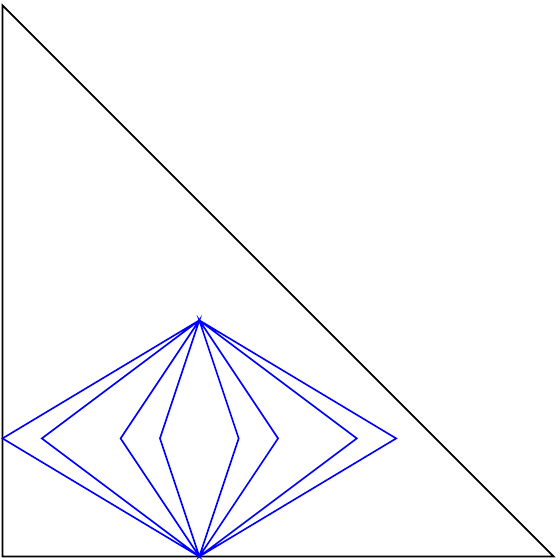}
	\caption{A zone decomposition in degree 14.}
	\label{FigCounterDeg14}
\end{subfigure}\hfill
\begin{subfigure}[t]{0.45\textwidth}
	\centering
	\includegraphics[width=\textwidth]{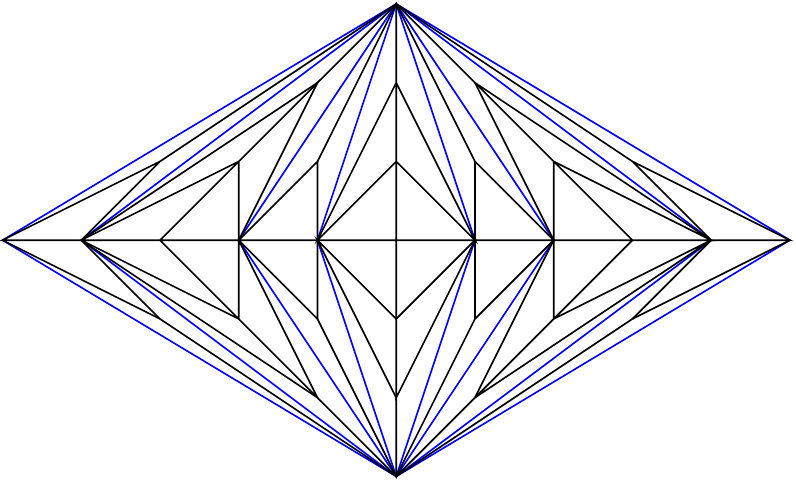}
	\caption{Subdivision inside the graph $K_{2,8}$.}
	\label{FigCounterDeg14Zoom}
\end{subfigure}
\label{FigCounter14}
\caption{Example \ref{ExRagsdale14}}
\end{figure}

\begin{lemma}
\label{LemCounterEx}
Let $(C,\E )$ be a non-singular tropical curve of degree $2k$ in $\T \pr^2$, such that the dual graph $T^\vee$ is a complete bipartite graph $K_{2,4m}$ satisfying the conditions above.
Then $(C,\E )$ is a dividing $(M-2)$ real tropical curve with real part $\R C_\E$ having $R (k) + 2m$ even ovals. 
\end{lemma}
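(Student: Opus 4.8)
The plan is to read off the topological type from \Cref{ThM-2} and the oval count from \Cref{ThEvenOddCountBipartite}, so that the whole statement collapses to one lattice-point computation inside the configuration. First I would verify that the conditions imposed on $K_{2,4m}$ guarantee $T\in\EvFreeCirc(C)$: the congruence conditions on $a_1,a_2$ modulo $3$, together with the prescribed spacing $b_{i+1}-b_i\in\{1,2\}$, force every edge $A_jB_i$ to be primitive, so that $K_{2,4m}$ genuinely embeds as a subgraph of the non-singular subdivision $\Delta_C$; and since $A_1,A_2$ have odd first coordinate while every $B_i$ sits at the odd height $3+4t$, all integer points of $T^\vee$ are odd, whence the configuration is even-free, non-exposed and dividing. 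As $\Gamma_T=K_{2,4m}$ is a complete planar bipartite graph together with isolated vertices, \Cref{ThM-2} immediately gives that $(C,\E)$ is a dividing $(M-2)$ real tropical curve, which also places us squarely in the hypotheses of \Cref{ThEvenOddCountBipartite}.

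Applying \Cref{ThEvenOddCountBipartite} with $s=1$ and $l_1=2m$ (writing $K_{2,4m}=K_{2,2\cdot 2m}$) yields
\[ p_\E = n_1 + p_0 + 1 + (2m+1) = n_1 + p_0 + 2m + 2 , \]
so it suffices to prove that $n_1+p_0 = R(k)-2$. Here $Y_1^T$ consists of the special zone together with the even-indexed bounded zones $Z_i=A_1B_iA_2B_{i+1}$ of the zone decomposition $\Delta_T$, while $Y_0^T$ consists of the odd-indexed bounded zones. I would compute $n_1+p_0$ by bookkeeping against the total number $R(k)$ of odd interior points of $\Delta_{2k}$. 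Since each edge of $K_{2,4m}$ is primitive, the only integer points lying on $T^\vee$ are the $4m+2$ vertices $A_1,A_2,B_1,\dots,B_{4m}$, all of which are odd. Letting $O_\ell^{\mathrm{odd}}$ and $E_\ell^{\mathrm{odd}}$ denote the numbers of odd and even interior points lying in the interiors of the odd-indexed zones, a direct count (in which the odd-point count of the even-indexed zones is absorbed into $n_1$ and cancels) gives
\[ n_1 + p_0 = R(k) - (4m+2) + \bigl(E_\ell^{\mathrm{odd}} - O_\ell^{\mathrm{odd}}\bigr) . \]

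The main obstacle is therefore the purely combinatorial evaluation $E_\ell^{\mathrm{odd}}-O_\ell^{\mathrm{odd}}=4m$. Each odd-indexed zone is the quadrilateral $A_1B_iA_2B_{i+1}$ with $b_{i+1}=b_i+1$; it splits into the two triangles $A_2B_iB_{i+1}$ and $A_1B_iB_{i+1}$, each of area $3/2$, so the zone has area $3$, and since its four edges are primitive it has exactly four boundary lattice points. By Pick's theorem it then contains exactly two interior lattice points. The crux is to show that both are \emph{even}: using the congruence conditions one locates them at the even heights $4t+2$ and $4t+4$ (one per triangle), and checks that the unique integer falling in the width-$2/3$ horizontal slice at each of these heights has even first coordinate. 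Each of the $2m$ odd-indexed zones then contributes $2-0=2$ to $E_\ell^{\mathrm{odd}}-O_\ell^{\mathrm{odd}}$, giving $4m$ in total and hence $n_1+p_0 = R(k)-(4m+2)+4m = R(k)-2$. Substituting back produces $p_\E = R(k)+2m$, as claimed.
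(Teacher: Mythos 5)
Your strategy in the main case coincides with the paper's: both arguments obtain the dividing $(M-2)$ property from \Cref{ThM-2}, reduce the oval count to \Cref{ThEvenOddCountBipartite} with $s=1$ and $l_1=2m$, and thereby reduce the lemma to the identity $n_1+p_0=R(k)-2$. Where you differ is in actually proving the lattice-point counts that the paper merely asserts ($n_1=R(k)-4m-2$, $p_0=4m$, $n_0=0$): your Pick's-theorem computation showing that each odd-indexed zone $A_1B_iA_2B_{i+1}$ with $b_{i+1}=b_i+1$ has exactly two interior lattice points, both even, is correct (primitivity of $A_1B_i$ and $A_1B_{i+1}$ forces $a_1+2b_i\equiv 2 \bmod 3$, and oddness of $a_1$ then forces $a_1+2b_i\equiv 5\bmod 6$, so the unique integer in the width-$2/3$ slice at height $4t+4$ is even), and it is a genuine improvement in rigor over the paper for that case. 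A small caveat: the displayed congruences only constrain $a_j+b_1$ and $a_j+b_{4m}$ modulo $3$, so ``every edge $A_jB_i$ is primitive'' does not literally follow from them as stated; you should derive it from the spacing pattern $b_i\equiv b_1$ or $b_1+1 \bmod 3$, or observe that a non-primitive edge of this slope carries only odd lattice points so even-freeness is not endangered.

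The genuine gap is that your proof silently assumes $T$ is non-exposed, i.e.\ that all $4m+2$ vertices of $K_{2,4m}$ are interior to $\Delta_{2k}$, so that $T\in\EvFreeCirc(C)$, $\Gamma_T=K_{2,4m}$, and \Cref{ThEvenOddCountBipartite} applies. The hypotheses of the lemma do not guarantee this: for $t=0$ the vertex $A_2=(a_2,0)$ lies on the bottom edge of $\Delta_{2k}$, and nothing prevents $B_1$ or $B_{4m}$ from lying on $\partial\Delta_{2k}$ either --- this is exactly what happens in \Cref{ExItenberg}, in \Cref{ExRagsdale14}, and in the configurations used for \Cref{ThCounterRagsdale}, so these are not degenerate corner cases but the situations the lemma is actually applied to. When a vertex of $T^\vee$ lies on the boundary, the incident twisted edges are exposed, the subgraph dual to the non-exposed twisted edges degenerates to $K_{2,4m-1}$, $K_{1,4m}$ or $K_{1,4m-1}$, and \Cref{ThEvenOddCountBipartite} can no longer be invoked. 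The paper devotes the entire second half of its proof to these cases: it checks that the number of connected components is unchanged, that all components are still ovals (now via \Cref{PropNewtonOval} rather than via non-exposedness), and that the even-oval total is preserved because the loss of odd lattice points on the graph is exactly compensated by the merging of the special oval with the oval coming from the special zone (or from the outer zone of $Y_0^T$). Your proof needs an analogous supplementary argument to cover the full range of the hypotheses.
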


\begin{proof}
Let $\Gamma$ be the subgraph of $T^\vee$ dual to the set of non-exposed twisted edges of $(C,\E)$. 
Then $\Gamma$ is either a complete bipartite graph $K_{2,4m}$, or a complete bipartite graph $K_{2,4m-1}$, or a complete bipartite graph $K_{1,4m}$, or a complete bipartite graph $K_{1,4m-1}$.
By \Cref{ThM-2}, we obtain that $(C,\E)$ is a dividing $(M-2)$ real tropical curve.

Assume first that $\Gamma$ is a complete bipartite graph $K_{2,4m}$.
In this case, we will be able to apply \Cref{ThEvenOddCountBipartite}, and we will show afterwards that the count remains unchanged in the other cases.

Let $Y$ be the union of all zones of $\Delta_T$ except the special zone $Z^T$.
We can assume that the distribution of signs $\delta$ on the dual subdivision $\Delta_C$ is of Harnack type $Y_1^T$, for $Y_T = Y_1^T \sqcup Y_0^T$ the zone partition such that the special zone $Z^T$ belong to $Y_1^T$.
The Newton polygon $\Delta_{2k}$ has $R(k)$ odd interior integer points, all lying in the closure of the union of zones $Y_1^T$.
There are $4m+2$ odd integer points lying on the graph $K_{2,4l}$, hence we obtain $n_1 = R(k)- 4m -2$ with the notation from \Cref{ThEvenOddCountBipartite}. 
Moreover, there are $p_0 = 4m$ even integer points in $Y_0^T$, with the notation from \Cref{ThEvenOddCountBipartite}.
By \Cref{LemEvenFreeCount}, we obtain already that \[ p_\E \geq n_1 + p_0 + 1 = R (k)-1. \]
The remaining even ovals to count come from the twisted cycles going through an edge of $T$.
Since $T^\vee$ is a complete bipartite graph of the form $K_{2,4m}$, we obtain by \Cref{ThEvenOddCountBipartite} that the real part $\R C_\E$ has $R(k)-1 + (2m + 1) = R(k) +2m$ even ovals.

Now, if $T^\vee$ meets the boundary of $\Delta_{2k}$ in one or two of its vertices, the number of connected component of $\R C_\E$ is unchanged as we saw earlier, and all those connected components are ovals by \Cref{PropNewtonOval}.
Then there are either $4m$ or $4m+1$ odd integer points lying on the subgraph $\Gamma$, so that $\R C_\E$ has either $R(k)-4m$ or $R(k)-4m-1$ even ovals coming from twisted cycles with edge support a primitive cycle of $C$.
In the proof of \Cref{LemEvenOddTwistBipartite}, the special zone and the zones in $Y_0^T$ give rise to even ovals.
If one of the vertices of $T^\vee$ lying on the boundary of $\Delta_{2k}$ belongs to the set of $2$ vertices of the bipartite structure of $T^\vee$, then the even oval coming from the special zone is the same as the special oval.
If one of the vertices of $T^\vee$ lying on the boundary of $\Delta_{2k}$ belongs to the set of $4m$ vertices of the bipartite structure of $T^\vee$, then the even oval coming from the zone of $Y_0^T$ meeting $\Delta_{2k}$ on their boundaries is the same as the special oval.
Said otherwise, there is a unique twisted cycle on $(C,\E)$ going through every unbounded edge of $C$ and through the edges of $T$ dual to the ``exterior edges" of the graph $K_{2,4m}$, so that the real part of this twisted cycle cannot be contained in any other oval of $\R C_\E$, hence it is an even oval.

%
\end{proof}

\begin{example}
\label{ExItenberg}
Let $(C,\E)$ be a non-singular real tropical curve of degree 10 in $\T \pr^2$ such that the zone decomposition $\Delta_T$ is induced by a complete bipartite graph $K_{2,4}$ as given in Figure \ref{FigItenberg}.
The dual graph of non-exposed twisted edges $\Gamma_T$ consists of a complete bipartite graph $K_{1,4}$ plus isolated vertices, and every cycle of $C$ has an even number of twists, so by Theorem \ref{ThM-2} the real tropical curve $(C,\E )$ is a dividing $(M-2)$ real tropical curve.
The integer point at the center of the subdivision given in Figure \ref{FigCounterDeg14Zoom} is $(3,3) \in \Delta_C \cap \Z^2$, and the extremal points are $(3,0), (3,6) , (1,3)$ and $(5,3)$.
This configuration of twists is Itenberg's counter-example to Ragsdale conjecture \cite{itenberg1993contre}.
By \Cref{LemCounterEx}, the real part $\R C_\E$ has $R(5) +2 = 32$ even ovals, hence one even oval more than Ragsdale bound in degree 10.
\end{example}

\begin{table}
\[ \begin{array}{|l|c|c|c| }
\hline
 & x_t \equiv 0 \mod 3  & x_t \equiv 1 \mod 3  & x_t \equiv 2 \mod 3  \\ 
\hline
P_t & (3,4t+6) & ( 3,4t+6) & (3 , 4t+6) \\ 
P_t ' & (3 , 4t) & ( 3,4t) & (3,4t) \\ 
E_t^1 & (1,4t+3) & (1,4t+3) & (1,4t+3) \\ 
E_t^{4l} & (x_t -4 , 4t+3) & (x_t -2 , 4t+3) & (x_t,4t+3) \\
\hline 
\end{array}  
  \]
\caption{Case $t$ even.}
\label{Table1}  
\end{table}

\begin{table}
\[ \begin{array}{|l|c|c|c| }
\hline
 & x_t \equiv 0 \mod 3  & x_t \equiv 1 \mod 3  & x_t \equiv 2 \mod 3  \\ 
\hline
P_t & (x_t -3,4t+6) & (x_t -4 ,4t+6) & (x_t - 5,4t+6) \\ 
P_t ' & ( x_t +3, 4t) & (x_t + 2 ,4t) & (x_t +1,4t) \\ 
E_t^1 & (1,4t+3) & (1,4t+3) & (1,4t +3) \\ 
E_t^{4l} & ( x_t - 4 , 4t+3) & (x_t - 2 , 4t+3) & ( x_t ,4t+3) \\
\hline 
\end{array}  
  \]
\caption{Case $t$ odd.}
\label{Table2}  
\end{table}

\begin{example}
\label{ExRagsdale14}
Let $(C,\E)$ be a non-singular real tropical curve of degree 14 in $\T \pr^2$ such that the zone decomposition $\Delta_T$ is induced by a complete bipartite graph $K_{2,8}$ as given in Figure \ref{FigCounterDeg14}.
The dual graph of non-exposed twisted edges $\Gamma_T$ consists of a complete bipartite graph $K_{1,7}$ plus isolated vertices, and every cycle of $C$ has an even number of twists, so by Theorem \ref{ThM-2} the non-singular real tropical curve $(C,\E )$ is a dividing $(M-2)$ real tropical curve.
The integer point at the center of the subdivision given in Figure \ref{FigCounterDeg14Zoom} is $(5,3) \in \Delta_C \cap \Z^2$, and the extremal points are $(5,0), (5,6) , (0,3)$ and $(10,3)$.
By \Cref{LemCounterEx}, the real part $\R C_\E$ has $R(7)+4 = 67$ even ovals, hence 3 even oval more than Ragsdale's bound.
\end{example}

Let us now describe a construction of counter-examples of Ragsdale conjecture in degree $2k \geq 10$, by taking a disjoint union of complete bipartite graphs of the form $K_{2,4m_t}$ in $\Delta_{2k}$ satisfying the conditions of \Cref{LemCounterEx}, for \[ t \in \left\{ 0, \ldots , \left\lfloor \frac{k-5}{2} \right\rfloor \right\}  ,\] in a similar fashion as Itenberg's construction in \cite{itenberg2001number}.
Let $x_t = 2k - (4t+3)$, so that the integer points $(x_t , 4t+3)$ lie on the boundary $\partial \Delta_{2k}$.
Each bipartite graph $K_{2,4l_t}$ will have its set of $4m_t$ vertices lying on the horizontal line at height $4t+3$, so that the length $L_t$ of $K_{2,4m_t}$ will be the maximal integer satisfying $L_t \leq x_t$ and $L_t = 4 \mod 6$.
 
Since $x_t = 1 \mod 2$, we can determine suitable coordinates for the extremal vertices $B_1^t , B_{4m_t}^t$ of $K_{2,4l_t}$ in terms $x_t \mod 3$ instead of $x_t \mod 6$. 
We give in Tables \ref{Table1} and \ref{Table2} the coordinates of the extremal points for our construction, where Table \ref{Table1} correspond to the case where $t = 0 \mod 2$ and Table \ref{Table2} correspond to the case where $t = 1 \mod 2$.   

\begin{theorem}[\Cref{IntroThRagsdale}]
\label{ThCounterRagsdale}
Let $(C,\E )$ be a non-singular dividing tropical curve of degree $2k \geq 10$ in $\T \pr^2$ with zone decomposition $\Delta_T$ induced by the disjoint union of graphs 
\[ \bigsqcup\limits_{t=0}^{\left\lfloor \frac{k-5}{2} \right\rfloor} K_{2,4m_t}  \]
with vertices described by \Cref{Table1} and \Cref{Table2}.
Then $(C,\E)$ is a dividing $(M-2 \left\lfloor \frac{k-3}{2} \right\rfloor )$ real tropical curve with real part $\R C_\E$ having 
 \[ R(k) + 1 + \frac{k^2 - 5k + s(k)}{6}\] 
even ovals, with $0 \leq s(k) \leq 10$ determined by the value of $k \mod 6$. 
The number of even ovals depending on the value of $k \mod 6$ are displayed in \Cref{Table3}. 
\end{theorem}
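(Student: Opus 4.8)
The plan is to separate the statement into a structural part (the dividing $(M-2\lfloor(k-3)/2\rfloor)$ conclusion, which is purely $\Z_2$-linear) and an enumerative part (the even-oval count), and then to finish with an elementary but case-heavy evaluation of a sum of floor functions. First I would check that the hypotheses of the earlier results hold for $T:=\bigsqcup_t T_t$. Every vertex of each $K_{2,4m_t}$ is odd: the middle vertices $B_i$ sit at the odd height $3+4t$, while $A_1,A_2$ have odd first coordinate by construction. Hence $T_t^\vee$ contains no even integer point, so each $T_t$ is an even-free circuit and $T\in\EvFreeCirc(C)$; the congruences modulo $3$ on $a_1,a_2,b_1,b_{4m_t}$ guarantee that the fan from $A_1,A_2$ to the $B_i$ refines to a genuine non-singular triangulation with the intended zone structure. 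Since the graphs lie at the pairwise distinct heights $4t+3$, they are disjoint and cycle-disjoint, and each $T_t$ is dual to a $K_{2,2l_t}$ with $l_t=2m_t$. Thus \Cref{ThM-2} applies to each $T_t$ and \Cref{CorM-2sDiv} gives that $(C,\E)$ is a dividing $(M-2s)$ curve, where $s$ is the number of graphs. As $s=\lfloor(k-5)/2\rfloor+1=\lfloor(k-3)/2\rfloor$, this is exactly the asserted $(M-2\lfloor(k-3)/2\rfloor)$.

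Next I would apply \Cref{ThEvenOddCountBipartite}, which yields
\[ p_\E=n_1+p_0+1+\sum_t(l_t+1)=n_1+p_0+1+\sum_t(2m_t+1). \]
The key auxiliary fact is the identity $n_1+p_0=R(k)-2s$. To establish it I would run the bookkeeping from the proof of \Cref{LemCounterEx} graph by graph: under the Harnack distribution all $R(k)$ odd interior points of $\Delta_{2k}$ begin in $Y_1^T$; each $K_{2,4m_t}$ moves its $4m_t+2$ odd vertices onto the edge set $T^\vee$ (hence off the zone interiors), while its $Y_0^T$-zones contain exactly $4m_t$ even interior points and no odd interior point. Because the graphs are pairwise disjoint, the points lying strictly between two graphs remain in $Y_1^T$ and are counted once, so summing gives $n_1=R(k)-\sum_t(4m_t+2)$ and $p_0=\sum_t4m_t$, whence $n_1+p_0=R(k)-2s$ and
\[ p_\E=R(k)+1+2\sum_t m_t-s. \]
Here I must also fold in the boundary analysis of \Cref{LemCounterEx}: for those $t$ (depending on $k$) for which a graph meets $\partial\Delta_{2k}$, the boundary oval coincides with the special oval furnished by \Cref{LemSpecialOval} and \Cref{LemEvenFreeCount}, so the count is unaffected and the special oval is counted exactly once in the ``$+1$''.

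It then remains to prove the closed-form identity
\[ 2\sum_{t=0}^{\lfloor(k-5)/2\rfloor}\Big\lfloor\tfrac{2k-4t-1}{6}\Big\rfloor-\Big\lfloor\tfrac{k-3}{2}\Big\rfloor=\frac{k^2-5k+s(k)}{6}, \]
where $m_t=\lfloor(2k-4t-1)/6\rfloor$ records the maximal admissible length $L_t=6m_t-2\le x_t=2k-4t-3$. I would write $\lfloor x/6\rfloor=(x-(x\bmod6))/6$, evaluate the arithmetic part $\sum_t(2k-4t-1)=(T+1)(2k-1-2T)$ with $T=\lfloor(k-5)/2\rfloor$ and $T+1=s$, and treat the residue sum $\sum_t\big((2k-4t-1)\bmod6\big)$ using that consecutive summands differ by $-4\equiv2\pmod6$, so the residues run through a period-$3$ cycle summing to $6$. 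The starting residue depends on $k\bmod3$ and the number of terms on $k\bmod2$, and these combine into the six cases $k\bmod6$, producing the stated values of $s(k)$ together with the even-oval counts recorded in \Cref{Table3}. This final summation, and in particular keeping the boundary and special-oval bookkeeping uniform across all six residue classes, is where the genuine work lies and is the main obstacle; once the hypotheses are verified the structural inputs from \Cref{CorM-2sDiv} and \Cref{ThEvenOddCountBipartite} are essentially formal.
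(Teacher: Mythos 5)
Your proposal is correct and follows essentially the same route as the paper: the structural claim via \Cref{CorM-2sDiv}, the count via \Cref{ThEvenOddCountBipartite} together with the bookkeeping of odd/even interior points and the boundary/special-oval analysis from \Cref{LemCounterEx}, and a final case analysis on $k \bmod 6$ to evaluate $2\sum_t m_t - s$. The only (harmless) organizational difference is that you apply \Cref{ThEvenOddCountBipartite} once globally with $n_1+p_0=R(k)-2s$ rather than summing the per-configuration gains $2m_t-1$ from \Cref{LemCounterEx}, and you evaluate the floor sum directly instead of via the paper's base sum plus correcting term $\tfrac{2}{3}N_1+\tfrac{4}{3}N_2$; both give the same values of $s(k)$.
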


\begin{table}
\[ \begin{array}{|c|c|}
\hline
k \mod 6 & p_\E \\ 
\hline
0 & R(k) + 1 + \frac{k^2 - 5k}{6} \\ 
1 & R(k) + 1 + \frac{k^2 - 5k+10}{6} \\
2 & R(k) + 1 + \frac{k^2 - 5k+8}{6} \\
3,5 & R(k) + 1 + \frac{k^2 - 5k+6}{6} \\
4 & R(k) + 1 + \frac{k^2 - 5k+4}{6} \\
\hline 
\end{array}  
  \]
\caption{Values in \Cref{ThCounterRagsdale}.}
\label{Table3}  
\end{table}

\begin{proof}
By \Cref{CorM-2sDiv}, we get that $(C,\E )$ is a dividing $(M-2 \left\lfloor \frac{k-3}{2} \right\rfloor )$ real tropical curve.

Using Tables \ref{Table1} and \ref{Table2}, we want to compute the number $m_t$ for each graph $K_{2,4 m_t}$, then compute the total gain of even ovals using Lemma \ref{LemCounterEx}.
The length of a configuration $K_{2,4 m_t}$ is equal to $4+6(m_t -1) = 6 m_t - 2$.
If $x_t \equiv 0 \mod 3$ (or $x_t \equiv 1 \mod 3$, or $x_t \equiv 2 \mod 3$), the length is given by $2k -4t - 8$ (or $2k -4t - 6$ , or $2k -4t - 4$).
We then obtain that
\[ l_t = \begin{cases} \frac{k-2t-3}{3} \\
\text{resp. } \frac{k-2t-2}{3} \\
\text{resp. } \frac{k-2t-1}{3} .
\end{cases}  \]
By \Cref{LemCounterEx}, the gain of even ovals of a configuration $K_{2,4 m_t }$ is $2m_t -1$.
Since \[ m_t \geq \frac{k-2t-3}{3}  \]
for all $t = 0 , \ldots , \left\lfloor \frac{k-5}{2} \right \rfloor$, we compute the gain by first computing the sum \[ \sum\limits_{t = 0}^{\lfloor \frac{k-5}{2} \rfloor} \left( 2 \frac{k-2t-3}{3} -1 \right) , \] then add a correcting term.  
Assume that $k$ is odd (the sum will be the same with $k$ even). 
We get
\begin{align*}
\sum\limits_{t = 0}^{\frac{k-5}{2}} \left( 2 \left( \frac{k-2t-3}{3} \right) -1 \right) & = \frac{2}{3} \left( \sum\limits_{t = 0}^{\frac{k-5}{2} } (k-3) \right)  - \frac{4}{3} \left( \sum\limits_{t = 0}^{ \frac{k-5}{2} } t \right) -  \frac{k-3}{2} \\
& = \frac{k^2 - 6k + 9}{3} - \frac{k^2 -8k +15}{6} - \frac{k-3}{2} \\
& = \frac{k^2 - 7k +12}{6} .
\end{align*}
The correcting term is given as \[ 2 \left( \sum_{\substack{t = 0 \\ x_t \equiv 1 [3] }}^{\lfloor \frac{k-5}{2} \rfloor} \left( \frac{1}{3} \right) + \sum_{\substack{t = 0 \\ x_t \equiv 2 [3] }}^{\lfloor \frac{k-5}{2} \rfloor } \left( \frac{2}{3} \right) \right) = \frac{2}{3} N_1 + \frac{4}{3} N_2 , \]
for $N_i := \left| \left\{ t \in \left\{ 0, \ldots , \left\lfloor \frac{k-5}{2} \right\rfloor \right\} ~ : ~ x_t \equiv i \mod 3 \right\} \right|$.

We compute the correcting term $\frac{2}{3} N_1 + \frac{4}{3} N_2$ depending of the value of $k \mod 6$ as follows.

\begin{itemize}
\item Assume $k = 0 \mod 6$.
	Then $x_0 = 0 \mod 3$, hence the sequence $(x_t \mod 3)_{t\geq 0}$ has the form $ 0, 2, 1, 0, 2, 1, \cdots$. 
	Now $\left\lfloor \frac{k-5}{2} \right\rfloor = \frac{k-6}{2} = 0 \mod 3$, hence $x_{\left\lfloor \frac{k-5}{2} \right\rfloor} = 0 \mod 3$.
	Therefore $N_1 = N_2 = \frac{k-6}{6}$.
\item Assume $k = 1 \mod 6$. 
	Then $x_0 = 2 \mod 3$, hence the sequence $(x_t \mod 3)_{t\geq 0}$ has the form $ 2, 1, 0, 2, 1, 0, \cdots$. 
	Now $\left\lfloor \frac{k-5}{2} \right\rfloor = \frac{k-5}{2} = 1 \mod 3$, hence $x_{\left\lfloor \frac{k-5}{2} \right\rfloor} = 1 \mod 3$.
	Therefore $N_1 = N_2 = \frac{k-1}{6}$.
\item Assume $k = 2 \mod 6$. 
	Then $x_0 = 2 \mod 3$, hence the sequence $(x_t \mod 3)_{t\geq 0}$ has the form $ 2, 1, 0, 2, 1, 0, \cdots$. 
	Now $\left\lfloor \frac{k-5}{2} \right\rfloor = \frac{k-6}{2} = 1 \mod 3$, hence $x_{\left\lfloor \frac{k-5}{2} \right\rfloor} = 1 \mod 3$. 
	Therefore $N_1 = N_2 = \frac{k-2}{6}$.
\item Assume $k = 3 \mod 6$. 
	Then $x_0 = 0 \mod 3$, hence the sequence $(x_t \mod 3)_{t\geq 0}$ has the form $ 0, 2, 1, 0, 2, 1, \cdots$. 
	Now $\left\lfloor \frac{k-5}{2} \right\rfloor = \frac{k-5}{2} = 2 \mod 3$, hence $x_{\left\lfloor \frac{k-5}{2} \right\rfloor} = 1 \mod 3$.
	Therefore $N_1 = N_2 = \frac{k-3}{6}$.
\item Assume $k = 4 \mod 6$. 
	Then $x_0 = 1 \mod 3$, hence the sequence $(x_t \mod 3)_{t\geq 0}$ has the form $1, 0, 2, 1, 0, 2, \cdots$. 
	Now $\left\lfloor \frac{k-5}{2} \right\rfloor = \frac{k-6}{2} = 2 \mod 3$, hence $x_{\left\lfloor \frac{k-5}{2} \right\rfloor} = 2 \mod 3$.
	Therefore $N_1 = N_2 = \frac{k-4}{6}$.
\item Assume $k = 5 \mod 6$. 
	Then $x_0 = 1 \mod 3$, hence the sequence $(x_t \mod 3)_{t\geq 0}$ has the form $1, 0, 2, 1, 0, 2,  \cdots$. 
	Now $\left\lfloor \frac{k-5}{2} \right\rfloor = \frac{k-5}{2} = 0 \mod 3$, hence $x_{\left\lfloor \frac{k-5}{2} \right\rfloor} = 1 \mod 3$.
	Therefore $N_1 = \frac{k+1}{6}$ and $N_2 = \frac{k-5}{6}$.
\end{itemize}  
\end{proof}

\begin{figure}
\centering
\begin{subfigure}[t]{0.45\textwidth}
\centering
\includegraphics[width=\textwidth]{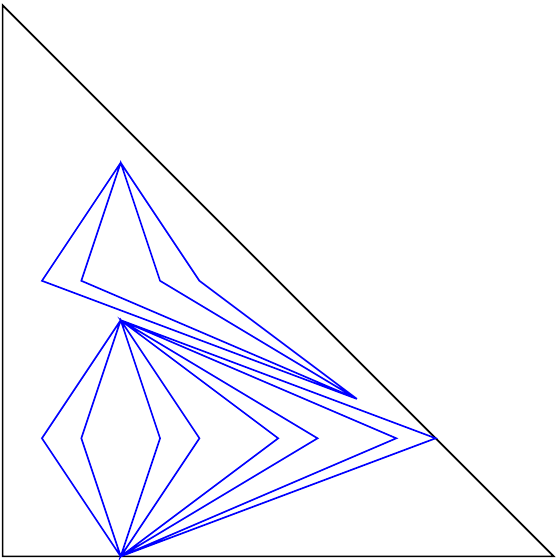}
\caption{Construction in Example \ref{ExDeg14}.}
\label{FigRagsdale14}
\end{subfigure}\hfill
\begin{subfigure}[t]{0.45\textwidth}
\centering
\includegraphics[width=\textwidth]{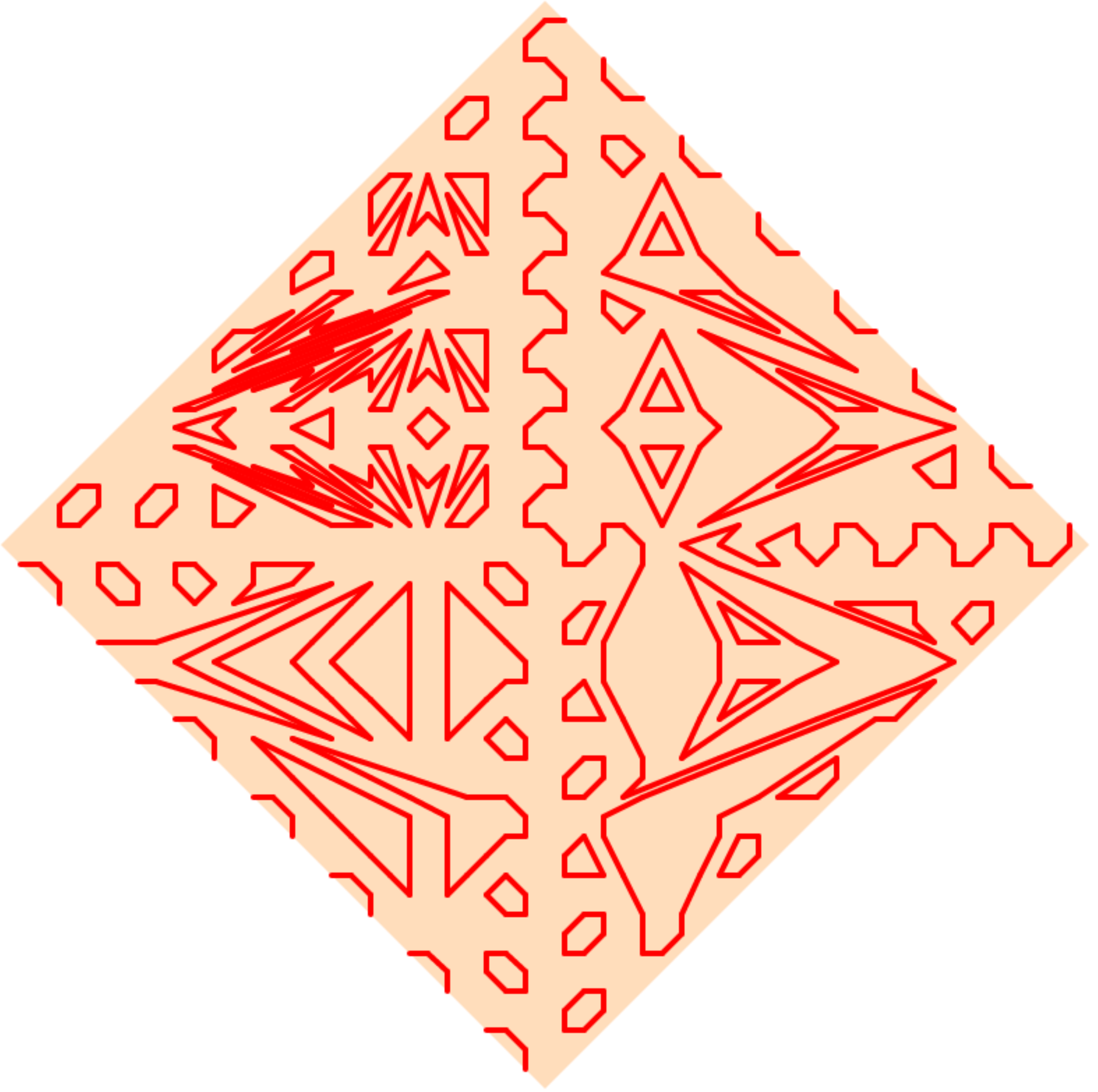}
\caption{Isotopy type of the real part.}
\label{RealPart142Article3}
\end{subfigure}
\caption{Construction in \Cref{ExDeg14}.}
\label{FigExDeg14}
\end{figure}

\begin{example}
\label{ExDeg14}
In degree 14, the construction is the one of \Cref{FigRagsdale14}. 
The number of even ovals of the real part of a non-singular real tropical curve $(C,\E)$ with zone decomposition $\Delta_T$ is $R(7)+5 = 68$, since we are in the case $k=1 \mod 6$.
The isotopy type of the real part is pictured in \Cref{RealPart142Article3}, and has been constructed via the \href{https://math.uniandes.edu.co/~j.rau/patchworking_english/patchworking.html}{Combinatorial Patchworking Tool} of El-Hilany, Rau and Renaudineau (this could also be constructed with the Sage package \href{https://cdoneill.sdsu.edu/viro/}{Viro.sage} from De Wolff, O'Neill and Owusu Kwaakwah).
\end{example}

\bigskip

 \bibliographystyle{alpha}
 \bibliography{bibliomoi1}

\end{document}